\definecolor{light-gray1}{gray}{0.90}
\definecolor{light-gray2}{gray}{0.80}
\definecolor{light-gray3}{gray}{0.60}
\newcommand{\wt}{\widetilde}
\newcommand{\vp}{\varphi}
\newcommand{\R} {\mathbb R}
\newcommand{\cuad}{{\sqcap\kern-.68em\sqcup}}
\newcommand{\ve}{\varepsilon}
\newcommand{\be}{\begin{equation}}
\newcommand{\ee}{\end{equation}}
\newcommand{\la}{\lambda}
\definecolor{darkgreen}{rgb}{0.2,0.7,0.1}
\newcommand{\sech}{\mathop{\mbox{\normalfont sech}}\nolimits}
\newcommand{\px}{\partial_x}
\newcommand{\pt}{\partial_t}
\newcommand{\nlop}{(1-\partial_x^2)^{-1}}
\newcommand{\al}{\alpha}
\newcommand{\bt}{\beta}
\def\bm{\left( \begin{array}{cc}}
\def\endm{\end{array}\right)}
\providecommand{\norm}[1]{\left\| #1 \right\|}
\newcommand{\ba}{\begin{equation*}}
\newcommand{\ea}{\begin{equation*}}
\newcommand{\bea}{\begin{eqnarray}}
\newcommand{\eea}{\end{eqnarray}}
\newcommand{\bee}{\begin{eqnarray*}}
\newcommand{\eee}{\end{eqnarray*}}
\newcommand{\ben}{\begin{enumerate}}
\newcommand{\een}{\end{enumerate}}
\numberwithin{equation}{section}
\newtheorem{theorem}{Theorem}[section]
\newtheorem*{theorem*}{Theorem}
\newtheorem{proposition}{Proposition}[section]
\newtheorem{corollary}{Corollary}[section]
\newtheorem{lemma}{Lemma}[section]
\newtheorem{definition}{Definition}[section]
\theoremstyle{remark}
\newtheorem{remark}{Remark}[section]
\title[Decay in abcd systems]{The scattering problem for the ABCD Boussinesq system in the energy space}
\author{Chulkwang Kwak}
\address{Facultad de Matem\'aticas, Pontificia Universidad Cat\'olica de Chile, Campus San Joaqu\'i­n. Avda. Vicu\~na Mackenna 4860, Santiago, Chile}
\email{chkwak@mat.uc.cl}
\thanks{C. K. is supported by FONDECYT Postdoctorate 2017 Proyect N$^{\circ}$ 3170067.}
\author{Claudio Mu\~noz}
\address{CNRS and Departamento de Ingenier\'{\i}a Matem\'atica and Centro
de Modelamiento Matem\'atico (UMI 2807 CNRS), Universidad de Chile, Casilla
170 Correo 3, Santiago, Chile.}
\email{cmunoz@dim.uchile.cl}
\thanks{C. M. work was partly funded by Chilean research grants FONDECYT  1150202, Fondo Basal CMM-Chile, MathAmSud EEQUADD and Millennium
Nucleus Center for Analysis of PDE NC130017.}
\author{Felipe Poblete}
\address{Universidad Austral de  Chile, Facultad de Ciencias, Instituto de Ciencias F\'isicas y Matem\'aticas,  Valdivia, CHILE.}
\email {felipe.poblete@uach.cl}
\thanks{F. P. is partially supported by Chilean research grant FONDECYT  1170466 and DID S-2017-43 (UACh).}
\author{Juan C. Pozo}
\address{Departamento de Matem\'atica y Estad\'istica, Facultad de Ciencias, Universidad de La Frontera, Casilla 54-D, Temuco, Chile}
\email{juan.pozo@ufrontera.cl}
\thanks{J. C. Pozo is partially supported by  Chilean research grant FONDECYT  11160295.}
\subjclass{35Q35,35Q51}
\begin{document}

\begin{abstract}
The Boussinesq $abcd$ system is a 4-parameter set of equations posed in $\R_t\times\R_x$, originally derived by Bona, Chen and Saut \cite{BCS1,BCS2} as first order 2-wave approximations of the incompressible and irrotational, two dimensional water wave equations in the shallow water wave regime, in the spirit of the original Boussinesq derivation \cite{Bous}. Among many particular regimes, depending each of them in terms of the value of the parameters $(a,b,c,d)$ present in the equations, the \emph{generic} regime is characterized by the setting $b,d>0$ and $a,c<0$. If additionally $b=d$, the $abcd$ system is hamiltonian. The equations in this regime are globally well-posed in the energy space $H^1\times H^1$, provided one works with small solutions \cite{BCS2}. In this paper, we investigate decay and the scattering problem in this regime, which is characterized as having (quadratic) long-range nonlinearities, very weak linear decay $O(t^{-1/3})$ because of the one dimensional setting, and existence of non scattering solutions (solitary waves). We prove, among other results, that  for a sufficiently \emph{dispersive} $abcd$ systems (characterized only in terms of parameters $a, b$ and $c$), all small solutions must decay to zero, locally strongly in the energy space, in proper subset of the light cone $|x|\leq |t|$. We prove this result by constructing three suitable virial functionals in the spirit of works \cite{KMM1,KMM2}, and more precisely \cite{MPP} (valid for the simpler scalar ``good Boussinesq'' model), leading to global in time decay and control of all local $H^1\times H^1$ terms. No parity nor extra decay assumptions are needed to prove decay, only small solutions in the energy space.
\end{abstract}

\maketitle

\section{Introduction and Main Results}

\medskip

\subsection{Setting of the problem} This paper concerns with the study of nonlinear scattering and decay properties for the one-dimensional $abcd$ system introduced by Bona, Chen and Saut \cite{BCS1,BCS2}:
\begin{equation}\label{boussinesq_0}
\begin{cases}
(1- b\,\partial_x^2)\partial_t \eta  + \partial_x\!\left( a\, \partial_x^2 u +u + u \eta \right) =0, \quad (t,x)\in \R\times\R, \\
(1- d\,\partial_x^2)\partial_t u  + \partial_x\! \left( c\, \partial_x^2 \eta + \eta  + \frac12 u^2 \right) =0.
\end{cases}
\end{equation}
Here $u=u(t,x)$ and $\eta=\eta(t,x)$ are real-valued scalar functions. This equation was  originally derived by Bona, Chen and Saut \cite{BCS1} as a first order, one dimensional asymptotic regime model of the water waves equation, in the vein of the Boussinesq original derivation \cite{Bous}, but maintaining all possible equivalences between the involved physical variables, and taking into account the shallow water regime. The physical perturbation parameters under which the expansion is performed are
\[
\al := \frac Ah \ll 1, \quad \bt := \frac{h^2}{\ell^2} \ll 1, \quad \al \sim \bt, 
\]
and where $A$ and $\ell$ are typical waves amplitude and wavelength respectively, and $h$ is the constant depth. Equations \eqref{boussinesq_0} are part of a hierarchy of Boussinesq models, including second order systems, which were obtained in \cite{BCS1} and \cite{BCL}.

\medskip

A rigorous justification for this model \eqref{boussinesq_0} from the free surface Euler equations (as well as extensions to higher dimensions) was given by Bona, Colin and Lannes \cite{BCL}, see also Alvarez-Samaniego and Lannes \cite{ASL} for improved results. Since then, these models have been extensively studied in the literature, see e.g. \cite{BLS,BCL,LPS,Saut} and references therein for a detailed account of known results. This list is by no means exhaustive.

\medskip

The constants in \eqref{boussinesq_0} are not arbitrary and follow the conditions \cite{BCS1}
\be\label{Conds0}
a+b =\frac12\left(\theta^2-\frac13\right),\quad c+d =\frac12 (1-\theta^2)\geq 0,
\ee
for some $\theta\in [0,1].$ Moreover, $a+b+c+d = \frac13$ is independent of $\theta$. (This case is referred as the regime without surface tension $\tau\geq 0$, otherwise one has parameters $(a,b,c,d)$ such that $a+b+c+d = \frac13-\tau$.)

\medskip

In this paper, we will concentrate in the \emph{Hamiltonian generic} case \cite[p. 932]{BCS2}, namely the case where
\be\label{Conds2}
b,d >0, \quad a,~c<0, \quad b=d.
\ee
In the case \eqref{Conds2}, it is well-known \cite{BCS2} that \eqref{boussinesq_0} is globally well-posed in $H^s \times H^s$, $s\geq 1$, for small data, thanks to the preservation of the energy
\be\label{Energy}
E[u ,\eta ](t):= \frac12\int \left( -a (\partial_x u)^2 -c (\partial_x \eta)^2  + u^2+ \eta^2 + u^2\eta \right)(t,x)dx,
\ee
which is conserved by the flow. Since now, we will identify $H^1 \times H^1$ as the standard \emph{energy space} for \eqref{boussinesq_0}. 

\medskip

As for the low regularity Cauchy problem associated to \eqref{boussinesq_0} and its generalizations to higher dimensions, Saut et. al. \cite{SX,SWX} studied in great detail the long time existence problem by focusing in the small physical parameter $\ve$ appearing from the asymptotic expansions. They showed well-posedness (on a time interval of order $1/\ve$) for \eqref{boussinesq_0}. Previous results by Schonbek \cite{Schonbek} and Amick \cite{Amick} considered the case $a=c=d=0$, $b=\frac13$, a version of the original Boussinesq system, proving global well-posedness under a non cavitation condition, and via parabolic regularization. Linares, Pilod and Saut \cite{LPS} considered existence in higher dimensions for time of order $O(\ve^{-1/2})$, in the KdV-KdV regime $(b=d=0)$. Additional low-regularity well-posedness results can be found in the recent work by Burtea \cite{Burtea}. On the other hand, ill-posedness results and blow-up criteria (for the case $b=1$, $a=c=0$), are proved in \cite{CL}, following the ideas in Bona-Tzvetkov \cite{BT}.

\medskip

By considering the new stretching of variables $ u(t/\sqrt{b},x/\sqrt{b})$ and $\eta(t/\sqrt{b},x/\sqrt{b})$, we can assume $b=d=1$ and rewrite \eqref{boussinesq_0} as the slightly simplified model 
\begin{equation}\label{boussinesq}
\begin{cases}
(1- \partial_x^2)\partial_t \eta  + \partial_x \! \left( a\, \partial_x^2 u +u + u \eta \right) =0, \quad (t,x)\in \R\times\R, \\
(1- \partial_x^2)\partial_t u  + \partial_x \! \left( c\, \partial_x^2 \eta + \eta  + \frac12 u^2 \right) =0,
\end{cases}
\end{equation}
with new conditions (see \eqref{Conds0}-\eqref{Conds2})
\be\label{Conds}
a,c<0, \quad a +1 =\frac1{2b}\left(\theta^2-\frac13\right),\quad c+1 =\frac1{2b} (1-\theta^2)\geq 0,
\ee
for some $\theta\in [0,1].$  Note that in particular now $c>-1$, and equation \eqref{boussinesq} will be the main subject of this paper.

\medskip

Since \eqref{boussinesq} under \eqref{Conds} is globally well-posed in the energy space, at least for small solutions, knowing the asymptotic behavior of solutions for large time is of important relevance. However, being \eqref{boussinesq} a one-dimensional model, decay and scattering properties are often difficult to prove, in particular for nonlinearities (such as the ones present in \eqref{boussinesq} which are long range of not too flat at the origin, with respect to the expected scattering dynamics. Moreover, sometimes not even decay is allowed because of the existence of solitary waves which do not decay in the energy space. Therefore, nonlinear decay and scattering estimates must take into account these obstructions to decay.

\medskip

\subsection{Decay via Virial functionals} In a series of papers \cite{KMM1,KMM2}, the above problematics were attacked by using techniques directly related to the dynamics of dispersive solutions in the energy space. Decay properties of one dimensional wave models in the energy space were proved in presence of very mild assumptions and strong enemies, such as variable coefficients, presence of internal modes, and quadratic nonlinearities. The case studied in \cite{KMM2}, the one dimensional Klein-Gordon equation
\[
\partial_t^2 u  - \partial_x^2 u  + mu + f(u) =0, \quad m\in\R,
\]
although in part a consequence of the more involved results in \cite{KMM1} (see also \cite{Snelson}), is specially relevant to the contents of this paper, since it considers precisely perturbations of the vacuum solution, which is assumed to be stable for all time. In particular, the following result was proved:

\begin{theorem}[\cite{KMM2}]\label{ThmA}
Assume that $f$ is odd and $|f'(u)|\leq C |u|^{p-1}$, $|u|<1$. Then any globally defined small, odd solution in the energy space must locally decay to zero in space: for any sufficiently small $\ve$,
\be\label{wave}
\sup_{t\in \R} \|(u,u_t)(t)\|_{H^1\times L^2} <\ve \implies \lim_{t\to \pm\infty} \|(u,u_t)(t)\|_{(H^1\times L^2)(I)} =0,
\ee
for any compact interval $I$.
\end{theorem}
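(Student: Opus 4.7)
The plan is to reduce the statement to proving a time-integrated local decay bound
\[
\int_0^\infty \int_{\R} \rho(x)\bigl(u_t^2 + u_x^2 + u^2\bigr)(t,x)\,dx\,dt < +\infty,
\]
for some positive, integrable weight $\rho$. Indeed, once such an integrability bound is established, a standard argument combining uniform-in-time control of $\|(u,u_t)(t)\|_{H^1\times L^2}$ with the continuity of $t\mapsto (u,u_t)(t)$ in the energy topology forces $\|(u,u_t)(t)\|_{(H^1\times L^2)(I)}\to 0$ along a sequence $t_n\to\infty$, and then along every sequence, giving \eqref{wave}. Replacing $u(t,x)$ by $u(-t,x)$ handles $t\to-\infty$.

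To produce the weighted space-time bound I would introduce two virial functionals, both localized on a large scale $A\gg 1$, tailored to the two coercivity problems of the Klein--Gordon model: the free wave part and the mass term. The first is a transport-type momentum functional
\[
\mathcal{I}(t) \;=\; \int \phi\!\left(\tfrac{x}{A}\right) u_t\, u_x\, dx,
\]
with $\phi$ smooth, bounded, and $\phi'\le 0$ well-localized. A direct computation using $u_{tt}=u_{xx}-mu-f(u)$ and integration by parts yields
\[
\frac{d}{dt}\mathcal{I}(t) \;=\; \frac1{2A}\int \phi'\!\left(\tfrac{x}{A}\right)\!\bigl(u_t^2+u_x^2\bigr)dx \;+\; \frac{m}{2A}\int \phi'\!\left(\tfrac{x}{A}\right)u^2\,dx \;+\;\mathcal{N}_1,
\]
where $\mathcal{N}_1$ collects the nonlinear terms, each of which is at least cubic and hence controlled by the smallness $\|u\|_{L^\infty}\lesssim \ve$. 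The first two terms furnish local-in-$x$ coercivity over $u_t^2+u_x^2$ (and $u^2$ when $m>0$). To handle the sign-indefinite case and to recover control over $u^2-u_t^2$ missing from $\mathcal{I}$, I would add the second, ``equipartition'' functional
\[
\mathcal{J}(t) \;=\; -\int \chi\!\left(\tfrac{x}{A}\right) u\, u_t\, dx,
\]
whose derivative produces $\int \chi(x/A)(u_x^2+mu^2+uf(u)-u_t^2)\,dx$ plus commutators that are either localized or cubic.

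The key step is then to choose the weights $\phi,\chi$ and a constant $\kappa>0$ so that $\Phi(t):=\mathcal{I}(t)+\kappa\mathcal{J}(t)$ satisfies
\[
-\frac{d}{dt}\Phi(t) \;\ge\; c\int \rho(x)\bigl(u_t^2+u_x^2+u^2\bigr)dx \;-\; C\ve\,\bigl(\text{locally}~L^2~\text{terms}\bigr),
\]
with $\rho$ positive and integrable, after which smallness absorbs the error and time integration on $[0,T]$, together with $|\Phi(t)|\lesssim \ve^2$, yields the desired bound uniformly in $T$. The main obstacle, and the point where the oddness hypothesis enters decisively, is the non-localized contribution coming from the mass term together with the commutator generated by $x$-cutoffs in $\mathcal{J}$: these terms are quadratic and not multiplied by any small factor, and they are only controlled at spatial scales comparable to $A$. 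Because $u(t,\cdot)$ is odd one has $u(t,0)=0$, so a one-dimensional Hardy inequality $\int (u^2/x^2)\,dx \lesssim \int u_x^2\,dx$ upgrades the crude bound $\int \chi'(x/A)u^2\,dx \le \|u\|_{L^2}^2$ into a bound of the form $A^{-1}\int u_x^2\,dx$ over the relevant strip, which is the precise gain needed to close the argument. The remaining steps (smoothing of $\phi,\chi$, integration by parts at the boundary of $\supp\phi'$, and the treatment of $\mathcal{N}_1,\mathcal{N}_2$ using $|f(u)|\lesssim |u|^p$) are routine adaptations of the arguments in \cite{KMM1,KMM2}.
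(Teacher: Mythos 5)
Your overall strategy (prove a space--time integrability bound for local norms via a virial inequality, then upgrade to decay) is the right one, but the pair of functionals you propose cannot produce the coercive inequality you claim, and there is a sign error that hides this. Computing correctly, with $u_{tt}=u_{xx}-mu-f(u)$,
\[
\frac{d}{dt}\int \phi\!\left(\tfrac{x}{A}\right)u_t u_x
= -\frac{1}{2A}\int \phi'\bigl(u_t^2+u_x^2\bigr)
\;+\;\frac{m}{2A}\int \phi'\,u^2\;+\;\frac1A\int\phi' F(u),
\]
so the $u^2$ block enters with the sign \emph{opposite} to that of $u_t^2+u_x^2$, not the same sign as you wrote. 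This is not a cosmetic issue: for $\Phi=\mathcal I+\kappa\mathcal J$ with $\mathcal J=-\int\chi\, u u_t$, the coefficient of $u_t^2$ in $-\frac{d}{dt}\Phi$ is $\frac{1}{2A}\phi'+\kappa\chi$ while the coefficient of $u^2$ is $-\frac{m}{2A}\phi'-\kappa m\chi$; their weighted sum $(\text{coeff of }u_t^2)+\frac1m(\text{coeff of }u^2)$ vanishes \emph{identically}, for every choice of $\phi$, $\chi$, $\kappa$. Since $u_t$ does not couple to $u$ or $u_x$ in the quadratic form, both diagonal coefficients would have to be bounded below by $c\rho>0$, which is impossible. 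So the inequality $-\frac{d}{dt}\Phi\ge c\int\rho\,(u_t^2+u_x^2+u^2)$ is unattainable by this route, and oddness/Hardy cannot rescue it because the obstruction is algebraic, not a matter of controlling an error term.

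The proof sketched in the paper (following \cite{KMM2}) resolves exactly this point by using the single functional $\int u_t\bigl(\psi u_x+\frac12\psi' u\bigr)$, i.e.\ by locking your $\kappa\chi$ to $\frac{1}{2A}\phi'$: this is precisely the degenerate choice for which the $u_t^2$ contributions cancel exactly (and so do the $mu^2$ contributions from the mass term), at the cost of giving up any $u_t^2$ on the coercive side. One is left with $\int\psi' u_x^2-\frac14\int\psi''' u^2-\int\psi'\bigl(\frac12 uf(u)-F(u)\bigr)$, and the coercivity $\gtrsim\int\psi'(u^2+u_x^2)$ is where oddness actually enters: $u(t,0)=0$ gives a weighted Poincar\'e/Hardy inequality controlling $\int\psi' u^2$ and $\int|\psi'''|u^2$ by $\int\psi' u_x^2$ (your Hardy idea is right in spirit but attached to the wrong term --- the mass term has already cancelled and causes no trouble). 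The local decay of $u_t$ claimed in \eqref{wave} is then recovered in a \emph{second}, separate step (a localized energy or equipartition argument exploiting the already-established space--time integrability of the local $H^1$ norm of $u$), not from the same virial. To repair your write-up you should replace $\mathcal I+\kappa\mathcal J$ by the KMM2 functional, drop $u_t^2$ from the target coercive form, and add the second step for $u_t$.
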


\begin{remark}
This result can be improved \cite{KMM3} by assuming $f$ not necessarily odd, but the flatness condition
\[
\exists C>0 \quad \hbox{s.t.} \quad \frac 12 u f(u) - F(u) \leq C |u|^6, \quad \forall  u\in\R, \quad F(u):= \int_0^u f(s)ds,
\]
is needed. This condition is satisfied for instance if $f$ is defocusing, or $f$ has focusing behavior, but $L^2$-supercritical at the origin. 
\end{remark}
\medskip

The essential point in the proof of \eqref{wave} was the following \emph{virial identity}: for any smooth, bounded $\psi=\psi(x)$, one has
\[
-\frac{d}{dt} \int u_t  \left(\psi u_x +\frac12 \psi' u \right) =  \int \psi' u_x^2 -\frac14 \int \psi''' u^2 - \int \psi'\left( \frac 12 u f(u) - F(u) \right), 
\]
which reveals that local control on $(u,u_x)$ can be obtained by proving the coercivity estimate
\[
  \int \psi' u_x^2 -\frac14 \int \psi''' u^2 - \int \psi'\left( \frac 12 u f(u) - F(u) \right) \gtrsim \int \psi' (u^2+ u_x^2).
\]
Precisely, this lower bound was obtained under the smallness and oddness assumptions stated in Theorem \ref{ThmA}. Roughly speaking, proving 
\[
-\frac{d}{dt} \int u_t  \left(\psi u_x +\frac12 \psi' u \right) \gtrsim \int \psi' (u^2+ u_x^2), 
\]
implies that the local $H^1$ norm of $u$ is converging to zero for large time, at least along a sequence of times.  (See also \cite{MM,MM1,MM2,MR} for preliminary introduction of virial identities to the study of the blow up dynamics in critical gKdV and NLS equations.) The reader may also consider the work by Lindblad and Tao \cite{Lin_Tao}, where similar  averaged decay estimates were obtained.

\medskip

In some sense, Theorem \ref{ThmA} can be recast as proving convergence to zero in time of the local energy norm, by-passing the standard linear decay estimates, that is to say, without proving convergence to a (modified or not) linear profile, which is well-known that decays.

\medskip

The versatility of the previous result, Theorem \ref{ThmA}, can be measured by the following two completely unrelated problems, for which very similar conclusions can be obtained, and even improving the conclusion in \eqref{wave}, in the sense that the interval $I$ in \eqref{wave} can be chosen \emph{growing as time evolves}. First, consider the quasilinear $1+1$ model known as the Born-Infeld model
\be\label{BI}
(1+(\partial_x u)^2)\partial_t^2 u +2 \partial_t u \, \partial_x u  \, \partial_{tx}^2 u -(1+(\partial_t u)^2)\partial_x^2 u =0, \quad u(t,x)\in\R.
\ee
Let $C_0>0$ be any large but fixed constant. In what follows, we will need the following, time-dependent, space interval:
\be\label{I(t)}
I(t):= \left(- \frac{C_0 |t|}{\log^2 |t|}, \frac{C_0|t|}{\log^2 |t|}\right), \quad |t|\geq 2.
\ee
Note that this interval is a slightly proper subset of the standard light cone $\{ |x|\leq |t| \}$, associated to wave-like equations. For equation \eqref{BI}, the following result was proved:
\begin{theorem}[\cite{AM1}]\label{ThmB}
There exists an $\ve>0$ such that if $(u,\partial_tu)$ solves \eqref{BI} with
\be\label{Cond_quasi}
\sup_{t\in \R}\|(u,\partial_t u)(t)\|_{H^s\times H^{s-1}}< \ve, \quad s>\frac32,
\ee
then one has, for any $C>0$ arbitrarily large and $I(t)$ defined in \eqref{I(t)},
\be\label{Conclusion_a}
\lim_{t \to\pm \infty}   \|(u,\partial_t u)(t)\|_{(H^1\times L^2)(I(t))} =0.
\ee
\end{theorem}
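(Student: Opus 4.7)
The plan is to follow the virial functional method from \cite{KMM2}, adapted to the quasilinear structure of \eqref{BI} and to a time-dependent weight that captures the growing localization interval $I(t)$ from \eqref{I(t)}. First, I would recast \eqref{BI} in a conservation-law friendly form: since Born-Infeld is the Euler-Lagrange equation of the Lagrangian $\sqrt{1+(\partial_x u)^2-(\partial_t u)^2}$, one obtains a divergence identity $\partial_t Q_0 + \partial_x Q_1 = 0$ for momenta $Q_0,Q_1$ that are smooth in $(\partial_t u,\partial_x u)$ and agree with $(\partial_t u,-\partial_x u)$ up to cubic corrections. This identifies the correct quadratic density to place inside the virial and reveals that the nonlinear modifications vanish to high order at the origin, which is essential given the smallness assumption \eqref{Cond_quasi}.

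Second, introduce a time-dependent weight $\psi(t,x)=\phi(x/\lambda(t))$, with $\phi$ smooth, bounded, odd, and $\phi'$ a positive bump essentially supported on $[-1,1]$ (for example, a regularization of $\tanh$), and with $\lambda(t)\sim |t|/\log^2|t|$ for $|t|\geq 2$. Consider the virial functional
\[
\mathcal{I}(t) := \int \partial_t u \left( \psi(t,x)\, \partial_x u + \tfrac12 \partial_x\psi(t,x)\, u \right) dx,
\]
possibly augmented by quasilinear corrections chosen to cancel the worst cubic contributions. Differentiating in time and using \eqref{BI}, the leading order reproduces, as in the Klein-Gordon case,
\[
\frac{d}{dt}\mathcal{I}(t) = \int \partial_x \psi \, \bigl( (\partial_x u)^2 + (\partial_t u)^2 \bigr) dx - \tfrac14 \int \partial_x^3 \psi \, u^2 dx + \mathcal{R}_{\mathrm{NL}}(t) + \mathcal{R}_{\mathrm{time}}(t),
\]
where $\mathcal{R}_{\mathrm{NL}}$ collects the quasilinear corrections and $\mathcal{R}_{\mathrm{time}}$ collects the contribution of $\partial_t\psi$, scaling as $\dot\lambda/\lambda \sim 1/(|t|\log^2|t|)$.

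Third, establish coercivity. For small solutions the positive term $\int \partial_x\psi\,((\partial_x u)^2+(\partial_t u)^2)$ dominates $\mathcal{R}_{\mathrm{NL}}$, because Sobolev embedding with $s>3/2$ gives $\|\partial_x u\|_{L^\infty}+\|\partial_t u\|_{L^\infty}\lesssim \varepsilon$, so cubic and quartic contributions are bounded by $\varepsilon$ times the quadratic ones. The term $-\tfrac14\int \partial_x^3\psi\, u^2$ is absorbed by the $\partial_x u$ piece via a Hardy-type estimate against the weight scale $\lambda(t)$. Crucially, $\mathcal{R}_{\mathrm{time}}$ is integrable in $t$ precisely because the chosen $\lambda$ makes $\dot\lambda/\lambda$ integrable at infinity; this is where the $\log^2 |t|$ margin in \eqref{I(t)} enters.

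Fourth, integrate in time. Since $\mathcal{I}(t)$ is uniformly bounded by the a priori energy bound \eqref{Cond_quasi}, integrating the coercivity inequality yields
\[
\int_{|t|\geq 2} \int_{I(t)} \bigl( u^2 + (\partial_x u)^2 + (\partial_t u)^2 \bigr)(t,x)\, dx\, dt < \infty,
\]
hence decay along a sequence $t_n\to\pm\infty$. Upgrading to the full limit \eqref{Conclusion_a} is then achieved either through a second, monotonicity-type functional in the spirit of \cite{KMM1,KMM2}, or by combining the finite speed of propagation of \eqref{BI} with a continuity-in-time argument. The main obstacle I expect is the interaction between the quasilinear character of \eqref{BI} and the time-dependent weight: the additional derivatives appearing in $\mathcal{R}_{\mathrm{NL}}$ can only be controlled by simultaneously exploiting the $L^\infty$ bounds from $s>3/2$ and a careful algebraic choice of the quasilinear correction inside $\mathcal{I}$, all while keeping $\mathcal{R}_{\mathrm{time}}$ small, which is exactly what fixes the optimal rate $\lambda(t)\sim |t|/\log^2|t|$ appearing in \eqref{I(t)}.
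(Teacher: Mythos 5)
You should first be aware that the paper does not prove Theorem \ref{ThmB} at all: it is quoted verbatim from \cite{AM1} as background motivation, and the only guidance the present paper gives is the model virial identity displayed after Theorem \ref{ThmA} and the remark that condition \eqref{Cond_quasi} is ``needed for a good definition of virial identities, see \cite{AM1} for more details.'' So there is no in-paper proof to compare against; your proposal can only be judged against the general virial scheme that the paper describes and then implements for the $abcd$ system in Sections \ref{VIRIAL}--\ref{7}. Measured against that scheme, your outline has the right architecture (quadratic virial density adapted to the Lagrangian structure, time-dependent weight with $\lambda(t)\sim |t|/\log^2|t|$ so that $\dot\lambda/\lambda$ is integrable, coercivity for small data, integration in time, then an upgrade from a sequence of times to the full limit), and this is indeed the strategy of \cite{AM1} and of this paper.

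There are, however, two concrete soft spots. First, your claimed leading-order identity is wrong as stated: for the functional $\int \partial_t u\,(\psi\,\partial_x u+\tfrac12\partial_x\psi\,u)$ the $(\partial_t u)^2$ contributions cancel exactly (as one sees in the Klein--Gordon computation reproduced before Theorem \ref{ThmA}, whose right-hand side contains only $\int\psi' u_x^2-\tfrac14\int\psi''' u^2$ plus nonlinear terms), so a single virial of this type does \emph{not} produce the coercive term $\int\partial_x\psi\,\bigl((\partial_x u)^2+(\partial_t u)^2\bigr)$. Recovering local control of $\partial_t u$ is precisely the reason this paper must introduce the auxiliary functionals $\mathcal J$, $\mathcal K$ and, crucially, the localized energy $E_{loc}$ of Section \ref{ENERGY}; your sketch needs an analogous second ingredient rather than a sign-favorable identity that is not there. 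Second, the upgrade from decay along a sequence $t_n\to\infty$ to the full limit \eqref{Conclusion_a} cannot be done by ``finite speed of propagation plus continuity in time''; the mechanism used here (Propositions \ref{prop:energy} and \ref{prop:energy2}) is to show that the time derivative of a localized energy is bounded by the very quantity the virial makes integrable in time, and then to integrate on $[t,t_n]$ and send $t_n\to\infty$. Your first alternative (a second monotonicity-type functional) is the correct one and should be made precise; the second alternative would not close the argument.
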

Note that condition \eqref{Cond_quasi} is needed in order to ensure local well-posedness for the quasilinear model \eqref{BI}. Also, it is needed for a good definition of virial identities, see \cite{AM1} for more details. Since $u(t,x):=\phi(x\pm t)$ is solution of \eqref{BI}, for any $\phi$ smooth, \eqref{Conclusion_a} establishes almost sharp decay estimates for the 1+1 dimensional BI model. 
  
\medskip

The second problem for which interesting conclusions can be stated is a classic one-wave fluid model. In \cite{MPP}, the authors considered extending the previous results to the case of the  \emph{good Boussinesq model} in 1+1 dimensions, 
\be\label{Good}
\begin{aligned}
\partial_t^2 u - \partial_x^2 u - \partial_x^4 u +\partial_x^2 f(u) =&~ 0, \quad  u(t,x)\in\R, \\
  f(0)=0, \quad |f'(s)| \lesssim  |s|^{p-1},  &~ |s|<1.
\end{aligned} 
\ee
This equation represents the simplest regularization of the originally derived (ill-posed), one-wave Boussinesq equation, which contains the fourth order term with positive sign: $+ \partial_x^4 u$. It is also a canonical model of shallow water waves as well as the Korteweg-de Vries (KdV) equation, see e.g. \cite{Bona}. Bona and Sachs \cite{Bona}, Linares  \cite{Linares}, and Liu \cite{Liu1,Liu2}, established that \eqref{boussinesq} is locally well-posed (and even globally well-posed for small data \cite{Bona,Linares}) in the standard energy space for $(u,\partial_t\partial_x^{-1} u) \in H^1\times L^2$. For this problem the following decay estimate was proved: 

\begin{theorem}[\cite{MPP}]\label{ThmC}
Consider \eqref{Good} in system form:
\be\label{Bous}
\begin{cases}
\partial_t u_1 = \partial_x u_2, \quad u_1 := u,\\
\partial_t u_2 = \partial_x (u_1- \partial_x^2 u_1-f(u_1)).
\end{cases}
\ee
There exists an $\ve>0$ such that if
\[
\|(u_1,u_2)(t=0)\|_{H^1\times L^2}< \ve,
\]
then one has, for any $C>0$ arbitrarily large and $I(t)$ defined in \eqref{I(t)},
\be\label{Conclusion_1}
\lim_{t \to\pm \infty}   \|(u_1,u_2)(t)\|_{(H^1\times L^2)(I(t))} =0.
\ee
\end{theorem}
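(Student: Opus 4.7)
The plan is to adapt the virial strategy of \cite{KMM1, KMM2} to the good Boussinesq system \eqref{Bous}, building a combination of time-dependent weighted virial functionals whose time derivative controls the local $H^1 \times L^2$ energy on the slowly growing window $I(t)$. The first step is to pick a smooth monotone weight $\varphi$ with $\varphi' \geq 0$ and $\varphi(\pm\infty) = \pm 1$, and set $\psi(t,x) := \varphi(x/\lambda(t))$ with $\lambda(t) \sim |t|/\log^2|t|$ for $|t| \geq 2$. Then $\psi_x$ is essentially supported on $I(t)$, so any space-time integrability bound of the form
\[
\int_2^{\infty} \int_{\R} \psi_x(t,x) \bigl( u_1^2 + (\partial_x u_1)^2 + u_2^2 \bigr)\, dx\, dt < \infty
\]
will deliver \eqref{Conclusion_1} after a standard monotonicity argument, since the local quantity on $I(t)$ has $L^1$-integrable time variation (itself controlled by another virial).

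The core functional would be of KMM type, adapted to the Boussinesq system:
\[
\mathcal{I}(t) = -\int u_2 \bigl( \psi\, \partial_x u_1 + \tfrac{1}{2}\psi_x\, u_1 \bigr) dx.
\]
Computing $\frac{d}{dt}\mathcal{I}(t)$ with $\partial_t u_1 = \partial_x u_2$ and $\partial_t u_2 = \partial_x(u_1 - \partial_x^2 u_1 - f(u_1))$, and integrating by parts, the fourth-order dispersion produces a positive $\int \psi_x (\partial_x^2 u_1)^2$, while the first-order linear terms produce $\int \psi_x u_2^2$ together with $\int \psi_x u_1^2$ (up to harmless constants). The weight-derivative remainders involve either $\psi_{xxx}$ (of order $\lambda^{-2}$) or $\psi_t$ (of order $\lambda'/\lambda \sim \log^2|t|/|t|$), both time-integrable thanks to the logarithmic correction in $\lambda$. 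To trade control of $(\partial_x^2 u_1)^2$ for control of $(\partial_x u_1)^2$ on $I(t)$, I would add a secondary virial of the form $\int \wt\psi (\partial_x u_1)^2 dx$ with a slightly wider cutoff $\wt\psi$; and a third, ``zero-mass'' virial involving only $u_1$ against a mollified weight would reinforce the $u_1^2$ control, which does not come as cleanly out of $\mathcal{I}$ as $u_2^2$. The nonlinear contribution is of schematic form $\int \psi_x f(u_1)\partial_x u_1 + \cdots$, bounded by $\ve^{p-1}$ times the coercive quadratic form via $H^1 \hookrightarrow L^{\infty}$ and the smallness hypothesis, hence absorbed for $\ve$ small. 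Since the energy bound $|\mathcal{I}(t)| \lesssim \ve^2$ holds uniformly in $t$, integrating in time the resulting coercive differential inequality
\[
\frac{d}{dt}\mathcal{I}(t) \geq c \int \psi_x \bigl(u_1^2 + (\partial_x u_1)^2 + u_2^2\bigr) dx - R(t), \qquad \int R(t)\, dt < \infty,
\]
yields the required space-time integrability.

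The main obstacle is to tune the three virials so that their combination is pointwise coercive in $(u_1, \partial_x u_1, u_2)$ on the \emph{whole} $I(t)$ while every weight-derivative remainder remains time-integrable; the rate $\lambda(t) \sim |t|/\log^2|t|$ is essentially sharp for this balance, since faster rates destroy the $\psi_t$-integrability and slower ones shrink $I(t)$ unnecessarily. A secondary difficulty is the long-range quadratic nonlinearity $\partial_x^2 f(u_1)$: since no flatness of $f$ at zero is assumed, the $\ve$-absorption must be run with sharp constants, depending on the interplay between $\|f'\|_{L^{\infty}}$ on small $H^1$-balls and the coercivity constants. Finally, passing from space-time integrability to the pointwise vanishing \eqref{Conclusion_1} requires a companion virial estimate controlling the temporal variation of the local $H^1 \times L^2$ density, forcing its $\liminf$ at infinity to be zero.
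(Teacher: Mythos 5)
First, a remark on provenance: the paper does not prove Theorem \ref{ThmC} at all --- it is quoted from \cite{MPP} as motivation, and the body of the paper carries out the analogous (harder) argument for the $abcd$ system. Your overall architecture is the right one and matches \cite{MPP} and Sections 3--7 here: a localized virial yielding space--time integrability of the local energy density, the scale $\lambda(t)\sim |t|/\log^2|t|$ chosen so that $1/\lambda$ is not integrable while the $\psi_t$-remainders are, absorption of the quadratic nonlinearity by smallness, and a companion estimate on the variation of the localized energy to upgrade decay along a sequence $t_n$ to decay for all times. The gap is in the core object: the functional you chose does not work for \eqref{Bous}.

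You take $\mathcal I(t)=-\int u_2\big(\psi\,\partial_x u_1+\tfrac12\psi_x u_1\big)$, transplanting the Klein--Gordon virial with $u_2$ in the role of $\partial_t u$. But in \eqref{Bous} one has $\partial_t u_1=\partial_x u_2$, not $\partial_t u_1=u_2$, and the cancellation that closes the Klein--Gordon computation fails. Differentiating, the piece $-\int u_2\big(\psi\,\partial_x\partial_tu_1+\tfrac12\psi_x\,\partial_tu_1\big)=-\int u_2\big(\psi\,\partial_x^2u_2+\tfrac12\psi_x\,\partial_xu_2\big)$ produces $\int\psi\,(\partial_xu_2)^2$ up to $\psi_{xx}$-corrections, and the piece coming from $\partial_tu_2=\partial_x(u_1-\partial_x^2u_1-f(u_1))$ produces, after integration by parts, the leading terms $-\int\psi\,(\partial_xu_1)^2-\int\psi\,(\partial_x^2u_1)^2$. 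These carry the weight $\psi$ itself --- which changes sign, since $\psi\to\pm1$ --- rather than the nonnegative bump $\psi_x$; they are sign-indefinite, do not cancel one another, and are not even finite for energy-space data ($u_2\in L^2$ and $u_1\in H^1$ only, so $\partial_xu_2$ and $\partial_x^2u_1$ are not square-integrable). Hence your claims that the fourth-order dispersion yields a positive $\int\psi_x(\partial_x^2u_1)^2$ and that the first-order terms yield $\int\psi_xu_2^2$ are not what the computation gives, and the auxiliary functionals $\int\widetilde\psi\,(\partial_xu_1)^2$ and a zero-mass virial in $u_1$ cannot absorb $\psi$-weighted (as opposed to $\psi_x$-weighted) quadratic terms. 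The correct main functional is the localized momentum $\int\psi\,u_1u_2$ --- the localization of the conserved quantity $\int u_1u_2$, exactly as $\mathcal I(t)=\int\varphi(u\eta+\partial_xu\,\partial_x\eta)$ localizes $P[u,\eta]$ in this paper --- for which a direct computation gives
\[
\frac{d}{dt}\int\psi\,u_1u_2=-\frac12\int\psi_x\big(u_1^2+u_2^2\big)-\frac32\int\psi_x(\partial_xu_1)^2+\frac12\int\psi_{xxx}\,u_1^2+\int\psi_x\big(u_1f(u_1)-F(u_1)\big),
\]
a genuinely one-signed quadratic form on the energy space for small data. With that replacement the rest of your outline is sound, although to close the energy step one still needs the Kato-smoothing correction controlling $\partial_x^2u_1$ locally, which is precisely the ingredient \cite{MPP} adds and which the present paper manages to avoid for the $abcd$ system.
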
 
Note that this decay result holds even in the energy space, and even in the presence of solitary waves. In some sense, it can be referred as a \emph{nonlinear scattering} result, because even nonlinear waves are considered as possible cases in \eqref{Conclusion_1}. Previous results on scattering of small amplitude solutions of \eqref{Bous} were proved by  Liu \cite{Liu1}, Linares-Sialom \cite{LS}, and Cho-Ozawa \cite{Cho}. In these works, decay was proved in weighted spaces, and for $p\geq p_c$, a certain critical exponent for (subcritical or critical modified) scattering.

\subsection{Main results}  
In this paper we improve the results obtained in \cite{MPP} for the scalar good Boussinesq model, by considering now the physically more representative, and mathematically more involved $abcd$ system of Boussinesq equations \eqref{boussinesq}, under \eqref{Conds}. We prove, using well-chosen virial functionals, that all small globally defined $H^1\times H^1$ solutions must decay in time in time-growing intervals in space, slightly proper subsets of the light cone. In order to rigorously state such a result, we need first a quantitative notion of dispersion in terms of the parameters present in the equations. 

\begin{definition}[Dispersion-like parameters]\label{Dis_Par}
We say that $(a,b,c)$ satisfying \eqref{Conds} are \emph{dispersion-like parameters} if additionally
\be\label{dispersion_like}
3(a+c) + 2  < 8ac. 
\ee
\end{definition}

Condition \eqref{dispersion_like} formally says that $(a,c)$, being both negative, need to be sufficiently far from zero, depending on the remaining parameter $b>0$. Indeed, for values of $a$ and $c$ sufficiently close to zero,  \eqref{dispersion_like} is not satisfied. From the mathematical point of view, this condition can be understood as having \emph{sufficient dispersion} in \eqref{boussinesq} to allow decay to zero for both components of the flow. Additionally, along this paper we will prove the following sufficient conditions:

\begin{lemma}[Sufficient conditions for existence of dispersion-like parameters]\label{Sufficient}
Assume  $a,c<0$. Then the following holds:
\begin{enumerate}
\item[(i)]\label{(i)} The remaining conditions in \eqref{Conds} hold for a continuous segment of parameters $(a,c)$, if and only if $b>\frac16\sim 0.17$ (cf. Fig. \ref{Fig:0}). In other words, the $abcd$ Boussinesq system in the regime $a,c<0$ and $b=d>0$ makes sense only for $b>\frac16.$ 
\smallskip
\item[(ii)]\label{(ii)} Moreover, if in  addition $b>\frac29\sim 0.22$, then the dispersion-like condition \eqref{dispersion_like} holds for a continuous segment of nonempty parameters $(a,c)$, described in Fig. \ref{Fig:5}.
\end{enumerate}
\end{lemma}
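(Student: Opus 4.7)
The plan is to eliminate the auxiliary parameter $\theta$ from the constraints \eqref{Conds}, reducing the problem to a geometric question about an affine segment in the $(a,c)$-plane with $b$ fixed, and then to translate the dispersion-like inequality \eqref{dispersion_like} into a polynomial inequality in $b$ alone.

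Summing the two identities in \eqref{Conds} that define $a + 1$ and $c + 1$ eliminates $\theta^2$ and yields the linear relation $a + c = \frac{1}{3b} - 2$. Conversely, given any $(a,c)$ on this line, a compatible $\theta \in [0,1]$ exists if and only if $-1 - \frac{1}{6b} \leq a \leq -1 + \frac{1}{3b}$, so the admissible set is a closed segment of an affine line. For part (i), the sign condition $a,c<0$ forces $a + c < 0$, which via the linear relation is exactly $b > \frac{1}{6}$. A short case analysis on the relative position of the bounds $\left\{ \frac{1}{3b} - 2,\, -1 - \frac{1}{6b},\, 0,\, -1 + \frac{1}{3b} \right\}$ (splitting according to whether $b \in (\frac{1}{6}, \frac{1}{3}]$, $(\frac{1}{3}, \frac{1}{2}]$, or $(\frac{1}{2}, \infty)$) then shows that for every $b > \frac{1}{6}$ the intersection of this segment with the open third quadrant is a non-degenerate open interval, completing (i).

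For part (ii), substituting the linear relation into $3(a+c) + 2$ yields $\frac{1}{b} - 4$, so the dispersion-like condition \eqref{dispersion_like} becomes $ac > \frac{1}{8b} - \frac{1}{2}$. With $S := a + c = \frac{1}{3b} - 2$ fixed, the product $P := ac$ attains its maximum $S^2/4$ at the symmetric point $a = c = S/2 = \frac{1}{6b} - 1$, which for $b > \frac{1}{6}$ lies strictly inside the admissible segment and in the open third quadrant. Hence a nonempty sub-segment on which the dispersion inequality holds exists if and only if $S^2/4 > \frac{1}{8b} - \frac{1}{2}$. Clearing denominators converts this to the polynomial inequality $108 b^2 - 33 b + 2 > 0$, whose positive roots are $\frac{1}{12}$ and $\frac{2}{9}$. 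Combined with $b > \frac{1}{6} > \frac{1}{12}$ from (i), this leaves precisely $b > \frac{2}{9}$, as required, and a picture of the resulting $(a,c)$-sub-segment would produce Fig.~\ref{Fig:5}.

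The main obstacle will be mostly bookkeeping rather than sophisticated analysis: one has to keep track of several linear bounds on $a$ (coming from $\theta^2 \in [0,1]$, from $a<0$, and from $c<0$ via the linear relation) and verify that their configurations yield genuinely open sub-segments rather than single points. Near the critical value $b = \frac{2}{9}$ a perturbative continuity argument is in order: for $b > \frac{2}{9}$ the symmetric point satisfies the dispersion inequality strictly, hence so does a full neighborhood of it along the admissible line, and a final direct check confirms this neighborhood remains within the $\theta$-admissible segment.
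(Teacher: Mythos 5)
Your proposal is correct and follows essentially the same route as the paper: both eliminate $\theta$ to obtain the affine constraint $a+c=\frac{1}{3b}-2$ (equivalently $a+c=\frac13-2b$ in the unscaled variables), derive $b>\frac16$ from the sign condition $a+c<0$, and reduce part (ii) to the positivity of a quadratic in $b$ with roots $\frac1{12}$ and $\frac29$ --- your $108b^2-33b+2>0$ is exactly $\tfrac{1}{32}$ times the paper's condition $3456b^2-1056b+64>0$. The only cosmetic difference is that you test the dispersion inequality at the symmetric point $a=c$ (where $ac$ is maximal on the line, and which you correctly check lies inside the admissible segment), whereas the paper counts intersection points of the line with the boundary curve $\gamma(b)$; these are the vertex-value and discriminant formulations of the same quadratic condition.
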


Since condition \eqref{dispersion_like} is nonempty por $b$ ``large enough'', we are ready to state the main result of this paper.

\begin{theorem}\label{Thm1}
Let $(u,\eta)\in C(\R, H^1\times H^1)$ be a global, small solution of \eqref{boussinesq}-\eqref{Conds}, such that for some $\ve>0$ small
\be\label{Smallness}
\|(u,\eta)(t=0)\|_{H^1\times H^1}< \ve.
\ee
Assume additionally that $(a,c)$ are dispersion-like parameters as in \eqref{dispersion_like}. Then, for $I(t)$ as in \eqref{I(t)}, there is strong decay:
\be\label{Conclusion_0}
\lim_{t \to \pm\infty}   \|(u,\eta)(t)\|_{(H^1\times H^1)(I(t))} =0.
\ee
\end{theorem}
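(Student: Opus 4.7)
The plan is to implement the virial strategy of \cite{KMM2,MPP} directly at the level of the coupled $abcd$ system \eqref{boussinesq}. After rewriting the equations in evolutionary form via $\nlop$, I would introduce a slowly time-dependent scale $\lambda(t) := C_0|t|/\log^2|t|$ adapted to the window $I(t)$ in \eqref{I(t)}, together with a smooth, bounded, nondecreasing cutoff $\varphi$ whose derivative $\varphi'$ is a positive compactly supported bump. The target is to build a functional $\mathcal{V}(t)$, uniformly bounded by $\varepsilon^2$ via Sobolev embedding and \eqref{Smallness}, whose time derivative satisfies a coercive lower bound of the form
\[
-\frac{d}{dt}\mathcal{V}(t) \gtrsim \frac{1}{\lambda(t)}\int \varphi'(x/\lambda(t))\,\bigl(u^2+\eta^2+u_x^2+\eta_x^2\bigr)(t,x)\, dx,
\]
modulo errors that are integrable in time.

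Following \cite{MPP}, I would take $\mathcal{V}$ to be a combination of three weighted functionals. A primary \emph{transport} virial $\mathcal{I}_1(t):=\int \varphi(x/\lambda(t))\, u\, \eta\, dx$, whose derivative under the flow, after integration by parts and use of \eqref{boussinesq}, localizes to $\varphi'(x/\lambda(t))$ times a quadratic form in $(u,\eta,u_x,\eta_x)$, plus cubic nonlinear errors and $\lambda(t)$-boundary pieces. The nonlocal operator $\nlop$ generates cross terms that do not naturally sit inside this quadratic form, and will require a second virial $\mathcal{I}_2$, involving $\varphi$ paired with $\nlop$-corrections of $u$ and $\eta$, in order to absorb them. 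A third virial $\mathcal{I}_3$, of the type built from a commutator of $\varphi$ with $\partial_x u$ and $\partial_x \eta$, will be needed to promote the local control from $L^2\times L^2$ to $H^1\times H^1$.

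The principal obstacle I expect is the \emph{pointwise positivity} of the resulting combined quadratic form $Q(u,\eta,u_x,\eta_x)$. Viewing $Q$ as a symmetric $4\times 4$ matrix and applying Sylvester's criterion (after optimally tuning the coupling constants among $\mathcal{I}_1,\mathcal{I}_2,\mathcal{I}_3$), coercivity should translate into a short list of scalar inequalities in $(a,b,c)$, the sharpest of which is precisely the dispersion-like condition \eqref{dispersion_like}; Lemma~\ref{Sufficient} then ensures that the parameter range is nonempty. The cubic nonlinear errors are of size $\varepsilon$ times the local quadratic norm and are absorbed by \eqref{Smallness}, whereas the boundary pieces scale like $\lambda'(t)/\lambda(t)=O(1/(|t|\log|t|))$ and are time-integrable.

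Once the coercive bound is established, integrating in $t$ against $|\mathcal{V}(t)|\lesssim \varepsilon^2$ yields
\[
\int_{\R}\frac{1}{\lambda(t)}\int \varphi'(x/\lambda(t))\,\bigl(u^2+\eta^2+u_x^2+\eta_x^2\bigr)(t,x)\, dx\, dt <\infty.
\]
Translating $\varphi'$ across the window $I(t)$ and extracting a subsequence gives $\|(u,\eta)(t_n)\|_{(H^1\times H^1)(I(t_n))}\to 0$; upgrading to the full limit $t\to +\infty$ will follow from a monotonicity argument on the localized energy exactly as in \cite{MPP}. The case $t\to -\infty$ is identical by time-reversal of \eqref{boussinesq}.
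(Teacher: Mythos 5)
Your overall architecture (a combination of weighted virials producing a time-integrable coercive bound, followed by a localized-energy monotonicity argument to upgrade subsequential decay to the full limit, with the dispersion-like condition \eqref{dispersion_like} emerging from the positivity analysis) matches the paper, and your endgame is correct. However, there is a genuine gap at the heart of the argument: the positivity step. After computing $\frac{d}{dt}$ of any linear combination of the virials, the quadratic part is \emph{not} a pointwise quadratic form in $(u,\eta,u_x,\eta_x)$: it necessarily contains the nonlocal terms $\int \varphi'\, u \nlop u$ and $\int \varphi'\, \eta \nlop \eta$, with coefficients $(1+a)(\alpha-\beta+1)$ and $(1+c)(\beta-\alpha+1)$ (see \eqref{eq:leading}), which cannot both be cancelled by tuning the coupling constants and are not removed by adding further virials of the type you describe. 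Consequently a $4\times 4$ Sylvester criterion on $(u,\eta,u_x,\eta_x)$ is not applicable: there is no pointwise comparison of definite sign between $\int\varphi'\, u\nlop u$ and $\int \varphi'\, u^2$. The device the paper uses, and which is missing from your proposal, is the passage to canonical variables $f=\nlop u$, $g=\nlop\eta$: in these variables the quadratic form becomes \emph{local and diagonal}, namely $\int\varphi'\left(A_1f^2+A_2f_x^2+A_3f_{xx}^2+A_4f_{xxx}^2\right)$ plus the analogous expression in $g$ and harmless $\varphi'''$ remainders (Lemma \ref{lem:leading}), so coercivity reduces to the scalar sign conditions $A_k,B_k>0$; Lemma \ref{lem:conditions} shows these are simultaneously achievable for some $(\alpha,\beta)$ exactly when \eqref{dispersion_like} holds. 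In particular no $u$--$\eta$ coupling survives at the quadratic level, so the matrix you expect to analyze never arises.

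A secondary but consequential point: you take $\varphi'$ to be a compactly supported bump. All the comparison lemmas that make the nonlocal and nonlinear terms tractable (Lemmas \ref{lem:L2 comparable}--\ref{lem:nonlinear3}) require a weight $\phi$ with $|\phi^{(n)}|\lesssim\phi$ and $\nlop\phi\lesssim\phi$; since $\nlop\phi\sim e^{-|x|}\star\phi$ has exponential tails, both conditions fail for compactly supported weights, and you must use an exponentially localized weight such as $\varphi'=\sech^2(x/\lambda)$. Finally, your primary virial $\int\varphi\, u\eta$ omits the $\partial_x u\,\partial_x\eta$ part of the momentum density; it is the full combination $\int\varphi\left(u\eta+\partial_xu\,\partial_x\eta\right)$ that pairs correctly with the operators $(1-\partial_x^2)\partial_t$ in \eqref{boussinesq} and yields a manageable derivative, and the two auxiliary functionals actually needed are $\int\varphi'\eta\,\partial_xu$ and $\int\varphi'u\,\partial_x\eta$ rather than commutator-type corrections designed to promote $L^2$ to $H^1$ control (the $H^1$ control is already built into the main virial).
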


Some remarks are important to mention.

\begin{remark}
From \eqref{Smallness} and \eqref{Energy} one readily has $\sup_{t\in \R}  \|(u,\eta)(t)\|_{H^1\times H^1}  \lesssim \ve$, that is, zero is stable for $a,c<0$.
\end{remark}

\begin{remark}
Theorem \ref{Thm1} is, as far as we know, the first decay-in-time result for the original, one dimensional $abcd$ Boussinesq systems, which contains some of the most challenging ingredients in scattering theory. See also Mu\~noz and Rivas \cite{Munoz_Rivas}, where decay of order $O(t^{-1/3})$ is proved in weighted Sobolev spaces, for the case of a generalized $abcd$ system with $a=c$, $b=d$ and nonlinearities $\partial_x(u^p \eta)$ and $\partial_x(u^{p+1}/(p+1))$, and power $p\geq 5$. However, equations \eqref{boussinesq} (characterized by the exponent $p=1$) involve three important challenging issues: \emph{long range} (quadratic) nonlinearities, very weak linear dispersion $O(t^{-1/3})$ inherent to the one dimensional setting, and the existence of (non scattering) solitary waves, standard enemies to decay.  
\end{remark}

\begin{remark}[About the rate of decay]
Theorem \ref{Thm1} provides a mild rate of decay for solutions in the energy space: it turns out that (see \eqref{eq:virial1})
\be\label{eq:virial1_intro}
\int_{2}^\infty \!\! \int e^{-c_0 |x|} \left(u^2 + \eta^2 + (\partial_x u)^2+ (\partial_x \eta)^2\right)(t,x)dx\, dt  \lesssim_{c_0} \ve^2,
\ee
which reveals that local-in-space $H^1$ norms do integrate in time. This decay estimate may be compared with the formal rate of decay of the linear dynamics, that should be in general $O_{L^\infty}(t^{-1/3})$ \cite{Munoz_Rivas} (recall that not all $a,b,c$ lead to linear systems with the same rate of decay). In that sense, \eqref{eq:virial1_intro} reveals that a hidden improved decay mechanism is present in \eqref{boussinesq}. No better estimate can be obtained from our methods, as far as we understand, unless additional assumptions (on the decay of the initial data) are a priori granted. Additionally, let us emphasize that the linear rate of decay leads (at least formally) to the emergence of modified scattering (or ``supercritical scattering'' because of long range nonlinearities). See \cite{KMM1,KMM3} for a complete discussion on this topic.  
\end{remark}

\begin{remark}
The interval $I(t)$ can be made slightly more precise, for instance, 
\[
\tilde I(t):= \left(- \frac{C_0 |t|}{\log |t| \log^2( \log |t|)}, \frac{C_0|t|}{\log |t| \log^2 (\log |t|)}\right),  \quad |t|\geq 11,
\] 
or subsequent $\log (\cdots) $ improvements, are also valid. However, the interval with ends $\pm \frac{C_0 |t|}{\log |t|}$ cannot be reached by our results.
\end{remark}

\begin{remark}\label{Velocity_group}
Theorem \ref{Thm1} is also in formal agreement with the expected group velocity of \eqref{boussinesq} linear waves. 
It is well-known that  
the dispersion relation is given by
\[
w(k) = \frac{\pm |k| }{1+ k^2}(1-ak^2)^{1/2}(1-ck^2)^{1/2},
\]
which implies the following group velocity
\[
|w'(k)|=  \frac{|ack^6 +3ack^4 -(1+2a+2c)k^2 +1|}{(1+k^2)^2 (1-ak^2)^{1/2}(1-ck^2)^{1/2}}.
\]
It can be proved (see more details in Appendix \ref{A}), that $|w'(k)|>0$ for all $k\in\R $ if $b\geq\frac29$. Compare also with the conclusions in Corollary \ref{TH2} on solitary waves and their speeds.
\end{remark}

\begin{remark}
Based on some formal computations, we believe that condition \eqref{dispersion_like} is not sharp (but close to sharp), and Theorem \ref{Thm1} should hold for values of $b$ below $\frac29$, but with harder proofs.
\end{remark}

Theorem \ref{Thm1} in some sense corroborates the versatility of the Virial technique when proving decay, bypassing the proof of existence of an asymptotic (modified) linear profile by standard scattering techniques. These last methods need to exclude the existence of small solitary waves by using weighted norms, an assumption that is not necessary in our methods. Precisely, Theorem \ref{Thm1} is in concordance with the existence of solitary waves for \eqref{boussinesq}.

\medskip

The proof of Theorem \ref{Thm1}, as all previously stated results, is based in the introduction of suitable virial identities from which local dispersion in the energy space is clearly identified. Since \eqref{boussinesq} is a two-component system, any virial term must contain contributions from both variables $u$ and $\eta$. It turns out that the main virial term is given by
\[
\mathcal I(t) := \int \vp (u\eta + \px u \px\eta),
\]
where $\vp= \vp (x)$ is a smooth and bounded function. The introduction of the functional $\mathcal I$ is simple to explain, and it is directly inspired by the momentum
\[
P[u,\eta](t) := \int (u\eta + \partial_x u\partial_x\eta)(t,x)dx,
\]
which is conserved by the $H^1\times H^1$ flow \cite{BCS2}. However, in order to prove Theorem \ref{Thm1}, we will need to introduce two new functionals:
\[
\mathcal J(t) := \int \vp' \eta \px u, \quad \hbox{and}\quad \mathcal K(t) := \int \vp'\px\eta u.
\]
The functionals $\mathcal J$ and $\mathcal K$ can be understood as second order corrections to the main functional $\mathcal I(t)$, and allow us to cancel out several bad terms appearing from the  variation in time of $\mathcal I(t)$. It is important to notice that $\mathcal J(t) +\mathcal K(t)$ is somehow technically useless because it contains the total derivative $\partial_x(u\eta)$, so as far as we need it, we will  modify $\mathcal I(t) $ by using different linear combinations of $\mathcal J(t)$ and $\mathcal K(t)$, depending on the particular values of $a$ and $c$. 

\medskip

Precisely, condition \eqref{dispersion_like} ensures that a suitable linear combination of $\mathcal I(t)$, $\mathcal J(t)$ and $\mathcal K(t)$ is enough to show dispersion for any small solution in the energy space: we will prove that there are parameters $\al,\bt\in\R$ and $d_0>0$ such that 
\be\label{positivo}
\begin{aligned}
\frac{d}{dt}  \big(\mathcal I(t)  + & \al\mathcal J(t) +\bt \mathcal K(t) \big)  \\
& \gtrsim_{a,b,c,d_0} \int e^{-d_0|x|} (u^2 + \eta^2 + (\partial_x u)^2 +(\partial_x \eta)^2)(t,x)dx.
\end{aligned}
\ee
The choice of parameters in the linear combination $\mathcal I(t) +\al\mathcal J(t) +\bt \mathcal K(t)$ is not trivial and we need to change to well-known variables (see \cite{ElDika2005-2,ElDika_Martel} for its use in the BBM equation), that we refer as ``canonical'' because of its natural emergence in this type of problems. These new variables are introduced in order to show positivity of the bilinear form associated to the virial evolution, exactly as in \eqref{positivo}. Canonical variables are specially useful when dealing with local and nonlocal terms of the form
\[
\int e^{-d_0|x|} u^2 , \quad \int e^{-d_0|x|} u (1-\px^2)^{-1}u,
\]
and similar others, and which in principle have no clear connection or comparison. 

\medskip

The virial technique that we use in this paper is not new. It was inspired in the recent results obtained by the last three authors in \cite{MPP}, which are inspired in foundational works by Martel and Merle \cite{MM,MM1}, and Merle and Rapha\"el \cite{MR} for gKdV and NLS equations (see also \cite{CMPS} for another use of virial identities to show decay in the $H^1$ vicinity of the Zakharov-Kuznetsov solitary wave). An important improvement was the extension of virial identities of this type to the case of wave-like equations, see the works by Kowalczyk, Martel and the second author \cite{KMM1,KMM2}. Then, this approach was succesfuly extended to the case of the good Boussinesq system in \cite{MPP}. In this paper, we have followed some of the ideas in \cite{MPP}, especially the introduction of the functional $\mathcal I(t)$. However, the main difference between \cite{MPP} and this paper is that here we need to introduce two additional modifications of the main virial term $\mathcal I$ in order to show positivity (in most of the cases), and no Kato-smoothing effect to control higher derivatives is needed. We are also able to prove Theorem \ref{Thm1} without any assumption on the parity of the data, leading to a general result in the energy space.

\medskip

\subsection*{Solitary waves} In addition to long range nonlinearities and modified scattering, the existence of solitary waves is another enemy to decay (in the energy space essentially), and must be taken into account by the conclusions of Theorem \ref{Thm1}. In \cite{BCL}, the authors investigate the existence of solitary waves for \eqref{boussinesq} coming from ground states, namely solutions of the form
\be\label{Solitary}
u(t,x)= U_\omega(x-\omega t), \quad \eta(t,x)=N_\omega(x-\omega t), \quad \omega\in \R,
\ee
and where $(U_\omega,N_\omega)$ obey a variational characterization. Among other results, they prove the existence of (nonzero) ground states $(U_\omega, N_\omega)\in H^1\times H^1$ as long as 
\[
a,\, c<0, \quad b=d, \quad |\omega|<\omega_0,\quad \omega_0 := \begin{cases} \min\{1,\sqrt{ac}\} , \quad  b\neq 0, \\ 1, \quad b=0. \end{cases}
\]
(Note that the speed $\omega$ never reaches the speed of light $=1$.) The construction of $(U_\omega,N_\omega)$ in \cite{BCL} is involved, and based in stationary, elliptic techniques, and minimization techniques in well-chosen admissible manifolds. See also \cite{CNS1,CNS2,Olivera} for further results on the existence of solitary waves for \eqref{boussinesq}.

\medskip

Although not explicit in general, $(U_\omega, N_\omega)$ are sometimes explicit. For instance (\cite{HSS}), for $\omega=0$ and $a=c<0$, 
\[
\begin{aligned}
(U_0, N_0)(x):=&~ {} \left(\sqrt{2}\, Q \left(\frac{x}{\sqrt{|a|}}\right) , -Q \left(\frac{x}{\sqrt{|a|}}\right) \right), \\
 Q(x): =&~ {} \frac{3}{2\cosh^2 \big(\frac x2\big)}, \qquad Q>0 ~\hbox{ solution of }~ Q''-Q + Q^2=0,  
\end{aligned}
\]
is a standing wave solution of \eqref{boussinesq} (see \cite{HSS} for other explicit solitary wave solutions, as well as the study of their linear and nonlinear stability). This simple fact also reveals that zero speed solutions are somehow ``too large'' to be considered by the hypotheses of Theorem \ref{Thm1}, and exactly this is the case. Indeed, Theorem \ref{Thm1} gives a \emph{dynamical} proof of nonexistence of any small solitary waves with zero speed. 

\begin{corollary}[Nonexistence of non scattering waves]\label{TH2}
Under the assumptions of Theorem \ref{Thm1}, the following hold:
\ben
\item The $abcd$ Boussinesq system \eqref{boussinesq} has no small, zero-speed solitary waves in the form \eqref{Solitary}. 
\smallskip
\item Any small solitary wave of nonzero speed $\omega$ exits the interval $I(t)$ in \eqref{I(t)} in finite time, no matter how small the speed $\omega$ is.
\smallskip
\item No standing small breather solutions exists for \eqref{boussinesq}.
\een
\end{corollary}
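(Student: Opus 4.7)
The plan is to reduce each of the three statements to Theorem \ref{Thm1}, exploiting that the expanding interval $I(t)$ eventually absorbs every fixed compact subset of $\R$, while solitary waves, their moving counterparts, and standing breathers either stay put, drift slower than $C_0|t|/\log^2|t|$, or return periodically to their initial profile.

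For item (1), I would take a hypothetical nontrivial small ground state $(U_0, N_0) \in H^1 \times H^1$ with $\|(U_0, N_0)\|_{H^1\times H^1} < \ve$ which, via \eqref{Solitary} with $\omega=0$, generates the static solution $(u,\eta)(t,x) = (U_0(x), N_0(x))$. Feeding this into Theorem \ref{Thm1} should give
\[
\lim_{t\to \pm\infty} \|(U_0, N_0)\|_{(H^1\times H^1)(I(t))} = 0,
\]
but since the integrand is time-independent and $I(t)$ absorbs every compact set, monotone convergence forces $\|(U_0, N_0)\|_{H^1\times H^1} = 0$, contradicting nontriviality.

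For item (2), the argument should be purely kinematic and require no virial information beyond the definition \eqref{I(t)}. The half-width of $I(t)$ equals $C_0|t|/\log^2|t|$, while the moving profile in \eqref{Solitary} is centered at $x=\omega t$; the inequality $|\omega t| > C_0|t|/\log^2|t|$ reduces to $\log^2|t| > C_0/|\omega|$, which is reached at a finite time of order $\exp(\sqrt{C_0/|\omega|})$. Combined with the exponential spatial decay of ground states (cf.\ \cite{BCL}), this shows that the soliton's effective support is concentrated outside $I(t)$ past that time, clarifying why moving solitons are compatible with the decay \eqref{Conclusion_0}.

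For item (3), I would let $(u,\eta)$ be a small global solution satisfying $\|(u,\eta)(t)\|_{H^1\times H^1}<\ve$ together with $(u,\eta)(t+T,\cdot) = (u,\eta)(t,\cdot)$ for some period $T>0$ (the adjective \emph{standing} being read as ruling out any net translation, so that the profile is genuinely periodic in $t$). Evaluating \eqref{Conclusion_0} along $t_n := nT$ and using periodicity yields
\[
\|(u,\eta)(0)\|_{(H^1\times H^1)(I(nT))} = \|(u,\eta)(t_n)\|_{(H^1\times H^1)(I(t_n))} \longrightarrow 0 \quad \text{as } n\to\infty,
\]
and monotone convergence as $I(nT)$ exhausts $\R$ forces $\|(u,\eta)(0)\|_{H^1\times H^1} = 0$, hence $(u,\eta)\equiv 0$. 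The only genuine obstacle I foresee is conceptual: pinning down what \emph{small breather} means precisely enough to rule out pathologies such as merely almost-periodic or asymptotically periodic profiles; once the standard definition of a time-periodic spatially localized small solution is fixed, the reduction to Theorem \ref{Thm1} is immediate as above.
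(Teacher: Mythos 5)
Your proposal is correct and follows essentially the same route the paper intends: the corollary is left as an immediate consequence of Theorem \ref{Thm1}, obtained exactly as you do by noting that $I(t)$ exhausts $\R$ (so a static or time-periodic small solution would have to vanish identically by \eqref{Conclusion_0}), while item (2) is the elementary kinematic observation that $|\omega t|$ outgrows $C_0|t|/\log^2|t|$ once $\log^2|t|>C_0/|\omega|$. Your reading of ``standing breather'' as a globally defined, time-periodic $H^1\times H^1$ solution matches the paper's own definition, so no gap remains.
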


By standing breather solution we mean a solution $(u,\eta) \in H^1\times H^1$ of \eqref{boussinesq} which is global and periodic in time. Several dispersive models do have stable and unstable breathers, and their existence are important counterexamples to asymptotic stability of the vacuum solution. Corollary \ref{TH2} discards the existence of such solutions near any bounded neighborhood of zero. See e.g. \cite{AM,AM2,AMP1,AMP2,Munoz,Alejo} and references therein for further details on breather solutions.

\medskip

Finally, note that Corollary \ref{TH2} shows that Theorem \ref{Thm1} is almost sharp (except by a $\log^2 t$ loss), in the sense that solitary waves \eqref{Solitary} move along lines $x = \omega t + const.$, and they do not decay in the energy space. 

\subsection*{Organization of this paper} 
This paper is organized as follows: Section \ref{2} contains some preliminary results needed along this paper, as well as the proof of Lemma \ref{Sufficient}, first item.  Section \ref{VIRIAL} deals with the Virial identities needed for the proof of Theorem \ref{Thm1}. Section \ref{sec:general virial} contains the study of the positivity of the bilinear form appearing from the virial dynamics, and the proof of Lemma \ref{Sufficient}, second item. The purpose of Section \ref{5} is to obtain integrability in time of the local $H^1\times H^1$ norm of the solution, the main consequence of the virial identity. Section \ref{ENERGY} contains energy estimates needed to improve virial estimates from previous sections. Finally, in Section \ref{7} we prove Theorem \ref{Thm1}.

\subsection*{Acknowledgments} We would like to thank Jerry L. Bona for introducing to us the $abcd$ system and explaining to us the Boussinesq models, and Jean-Claude Saut for some useful criticisms, as well as explaining to us the main results for local and global existence. Part of this work was done while the second author was visiting U. Austral (Valdivia, Chile), and the first and second author were visiting UNICAMP at Campinas (Brazil) because of \emph{Third Workshop on Nonlinear Dispersive Equations}, places where part of this work was done. The authors acknowledge the warm hospitality of both institutions. Part of this work was also complete while first two authors were part of \emph{EEQUADD MathAmSud Workshop 2017}, held at DIM-CMM U. of Chile. Finally, we thank Miguel Alejo for a careful proofreading of a first version of this manuscript.

\bigskip

\section{Preliminaries}\label{2}

\medskip

This section describes some auxiliary results that we will need in the following sections. 

\subsection{Numerology in the $abcd$ Boussinesq system}\label{abc Conds} Consider the parameters $(a,b,c,d)$ satisfying \eqref{Conds0} and  \eqref{Conds2} only (note that we are not assuming \eqref{Conds}). The purpose of this paragraph is to better understand these conditions, in order to justify some computations in Section \ref{sec:general virial}.

\medskip  

First of all, note that \eqref{Conds2} and \eqref{Conds0} imply that 
\be\label{eq:par_cond3}
b=\frac16 -\frac12(a+c),
\ee
and then 
\[
b>\frac16.
\]
This will be the starting point of the following reasoning. Using \eqref{Conds0} we also obtain
\begin{equation}\label{eq:par_cond4}
b = \frac12\left(\theta^2 - \frac13\right) - a, \quad \theta \in [0,1]  ,
\end{equation}
and
\begin{equation}\label{eq:par_cond5}
b = \frac12\left(1-\theta^2 \right) - c, \quad \theta \in [0,1].
\end{equation} 
Fix $b \in \R$. Let us define sets of pairs $(a,c)$ satisfying the following conditions: 
\be\label{Bj}
\begin{aligned}
\mathcal{B}_0 := & \left\{ (a,c) \in \R^2 : a < 0, \; c<0\right\}, \\
\mathcal{B}_1(b) :=& \left\{ (a,c) \in \R^2 : a+c = \frac13 - 2b\right\},\\
\mathcal{B}_2(b) := & \left\{ (a,c) \in \R^2 : -b -\frac16 \le a \le - b + \frac13 \right\}, \\
\mathcal{B}_3(b) := & \left\{ (a,c) \in \R^2 : -b \le c \le -b +\frac12 \right\}.
\end{aligned}
\ee
Note that $\mathcal{B}_1(b)$, $\mathcal{B}_2(b)$ and $\mathcal{B}_3(b)$ reflect the conditions \eqref{eq:par_cond3}, \eqref{eq:par_cond4} and \eqref{eq:par_cond5}, respectively (see Fig. \ref{Fig:0} for details). One can see that the set $\mathcal{B}_1(b)$ describes the line on the $(a,c)$-plane passing through the points $\big(-b-\frac16 ,  -b + \frac12\big)$ and $\big( -b +\frac13 , -b \big)$, which means $\mathcal{B}_1(b) \cap \mathcal{B}_2(b) \cap \mathcal{B}_3(b) \neq \emptyset$ for all $b \in \R$. Moreover, 
\[
\mathcal{B}_0 \cap \mathcal{B}_1(b) \cap \mathcal{B}_2(b) \cap \mathcal{B}_3(b) \neq \emptyset
\]
when $b > \frac16$. We resume these findings in the following result.

\begin{lemma}\label{1o6}
There exist $(a,b,c,d)$ such that \eqref{Conds0} and  \eqref{Conds2} are satisfied if and only if $b>\frac16.$ 
\end{lemma}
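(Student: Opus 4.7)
The plan is to reduce both directions to a single algebraic observation. First I would note that adding the two relations in \eqref{Conds0} gives the identity $a+b+c+d = \tfrac13$ (independent of $\theta$), and combining this with $b = d$ from \eqref{Conds2} yields
\[
a + c \;=\; \tfrac13 - 2b.
\]

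For the forward direction, if $(a,b,c,d)$ satisfy both sets of conditions, then $a,c<0$ forces $a+c<0$, so $\tfrac13 - 2b < 0$ and therefore $b > \tfrac16$.

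For the reverse direction, assume $b > \tfrac16$. I will simply exhibit an explicit admissible tuple. Taking $\theta^2 = \tfrac23 \in [0,1]$ and $d = b$, the two relations in \eqref{Conds0} force
\[
a \;=\; \tfrac12\bigl(\tfrac23 - \tfrac13\bigr) - b \;=\; \tfrac16 - b, \qquad c \;=\; \tfrac12\bigl(1 - \tfrac23\bigr) - b \;=\; \tfrac16 - b.
\]
Both are strictly negative because $b > \tfrac16$; moreover $b = d > 0$, and $c + d = \tfrac16 \geq 0$, so every requirement in \eqref{Conds0}--\eqref{Conds2} is satisfied.

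There is no substantive obstacle here: the lemma is essentially the one-line algebraic identity $a+c = \tfrac13 - 2b$, together with the elementary observation that a sum of two negatives is negative. The only minor point worth recording for later use is that the midpoint choice $\theta^2 = \tfrac23$ lies at the center of the open interval $(1-2b,\,2b+\tfrac13)$ of $\theta^2$-values for which both $a(\theta) = \tfrac12(\theta^2 - \tfrac13) - b$ and $c(\theta) = \tfrac12(1-\theta^2) - b$ remain strictly negative; this interval is nonempty precisely when $b > \tfrac16$ and, after intersecting with $[0,1]$, yields the continuous segment of admissible $(a,c)$ claimed in Lemma \ref{Sufficient} (i) (cf. Fig.~\ref{Fig:0}).
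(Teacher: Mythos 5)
Your proof is correct and follows essentially the same route as the paper: the forward direction rests on the identical identity $a+c=\tfrac13-2b$ (the paper's \eqref{eq:par_cond3}), and your existence direction replaces the paper's set-intersection argument in the $(a,c)$-plane (the segment $\mathcal{B}_0\cap\mathcal{B}_1(b)\cap\mathcal{B}_2(b)\cap\mathcal{B}_3(b)$ of Fig.~\ref{Fig:0}) with the explicit witness $\theta^2=\tfrac23$, $a=c=\tfrac16-b$, $d=b$, which is a concrete point of that same segment. This makes the existence step slightly more self-contained, but there is no substantive difference in method.
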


Note that this result proves Lemma \ref{Sufficient}, item (i).

\begin{figure}[h!]
\begin{center}
\begin{tikzpicture}[scale=0.8]
\filldraw[thick, color=lightgray!30] (-1,1.5)--(5.2,1.5) -- (5.2,5) --(-1,5) -- (-1,1.5);
\filldraw[thick, color=lightgray!10] (-1,-1)--(-1,4) -- (4,4) --(4,-1) -- (-1,-1);
\filldraw[thick, color=lightgray!60] (0.5,-1)--(0.5,5.3) -- (3.2,5.3) --(3.2,-1) -- (0.5,-1);
\draw[thick, color=black] (1.25,4) -- (3.2,1.5);
\draw[thick,dashed] (0.5,5) -- (1.25,4);
\draw[thick,dashed] (3.2,1.5) -- (5.2,-1);
\draw[thick,dashed] (0.5,-1)--(0.5,5.3);
\draw[thick,dashed] (3.2,-1)--(3.2,5.3);
\draw[thick,dashed] (-1,1.5)--(5.2,1.5);
\draw[thick,dashed] (-1,5)--(5.2,5);
\draw[->] (-1,4) -- (5.2,4) node[below] {$a$};
\draw[->] (4,-1) -- (4,5.3) node[right] {$c$};
\node at (1.9,-0.7){$ \mathcal{B}_2(b)$};
\node at (3,4){$\bullet$};
\node at (2.7,4.4){$-\frac16$};
\node at (1.5,4){$\bullet$};
\node at (1.5,4.4){$-b$};
\node at (0.5,4){$\bullet$};
\node at (-0.3,4.4){$-b -\frac16$};
\node at (4,1.5){$\bullet$};
\node at (4.3,1.2){$-b$};
\node at (4,5){$\bullet$};
\node at (4.7,4.6){$-b+\frac12$};
\node at (4.7,3.3){$ \mathcal{B}_3(b)$};
\node at (1.5,2.7){$ \mathcal{B}_1(b)$};
\node at (-0.7,3.5){$\mathcal{B}_0$};
\end{tikzpicture}
\qquad 
\begin{tikzpicture}[scale=0.8]
\filldraw[thick, color=lightgray!60] (0,4.7)--(0,2) -- (4/3,1) --(4,3) -- (4,4.7) -- (0,4.7);
\draw[thick,dashed] (4,-1) -- (4,3);
\draw[thick] (4,3) -- (4,5);
\draw[thick,dashed] (0,0) -- (4,3);
\draw[thick,dashed] (0,2)--(8/3,0);
\draw[thick] (0,2) -- (0,5);
\draw[->] (-1,0) -- (5,0) node[below] {$\nu$};
\draw[->] (0,-1) -- (0,5) node[right] {$b$};
\node at (0,0){$\bullet$};
\node at (4,0){$\bullet$};
\node at (4.3,-0.4){$1$};
\node at (4/3,0){$\bullet$};
\node at (8/3,-0.4){$\frac23$};
\node at (4/3,-0.4){$\frac13$};
\node at (8/3,0){$\bullet$};
\node at (0,2){$\bullet$};
\node at (-0.3,2){$\frac 13$};
\node at (0,1){$\bullet$};
\node at (-0.3,1){$\frac16$};
\node at (0,3){$\bullet$};
\node at (-0.3,3){$\frac12$};
\node at (4.8,3){$b=\frac12\nu$};
\node at (2,3){$ \mathcal{R}_0$};
\end{tikzpicture}
\end{center}
\caption{(\emph{Left}). The set $\mathcal{B}_0 \cap \mathcal{B}_1(b) \cap \mathcal{B}_2(b) \cap \mathcal{B}_3(b) $ depending on the parameter $b$, in the case where $b>\frac16$. The {\bf continuous} segment of the line $\mathcal{B}_1(b)$ included in $\mathcal{B}_0 \cap \mathcal{B}_2(b) \cap \mathcal{B}_3(b) $ corresponds to the admissible set of parameters $(a,c)$ (depending on $b$) for which the $abcd$ Boussinesq system makes physical sense. (\emph{Right}). A new representation (in bold) of the set $\mathcal R_0$ defined in \eqref{P0} in terms of free parameters $(\nu,b)$, see \eqref{eq:parameter set0}. Note that each point has associated values $(a,c)$ via formula  \eqref{eq:parameter set0}, and the set of admissible values makes sense only if $b>\frac16$. Note that at $(\nu,b)=(\frac13,\frac16)$, one has $(a,c)=(0,0)$.}\label{Fig:0}
\end{figure}
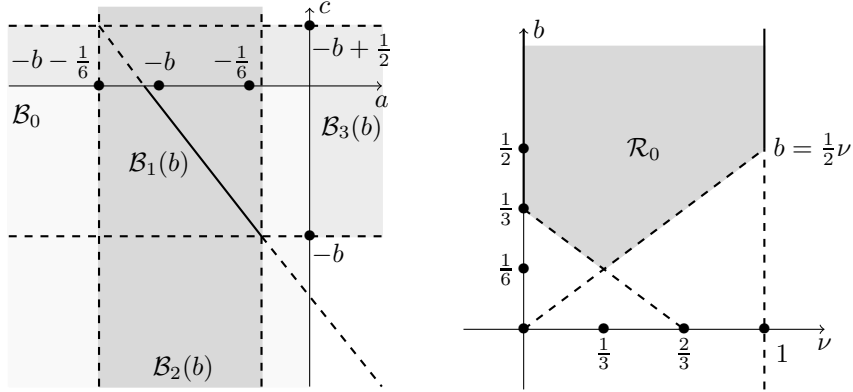
 
\medskip

\begin{remark}
An alternative form for expressing the set $\mathcal{P}_0(b):= \mathcal{B}_0 \cap \mathcal{B}_1(b) \cap \mathcal{B}_2(b) \cap \mathcal{B}_3(b)$ above described is the following 2-parameter description: the set of points $(a,b,c)$ such that 
\begin{equation}\label{eq:parameter set0}
(a,b,c) = \left(-\frac{\nu}{2} + \frac13 -b,\; b,\; \frac{\nu}{2} - b \right), \quad \nu \in [0,1] \cap \left(\frac23 - 2b, 2b \right).
\end{equation}
Finally, we introduce the sets
\be\label{P0}
\mathcal{P}_0 := \bigcup_{b>1/6} \mathcal{P}_0(b), \quad \mathcal{R}_0:=  \left\{ (\nu,b) ~: ~   \nu \in [0,1] \cap \left(\frac23 - 2b, 2b \right), ~b>\frac16\right\}.
\ee
See Fig. \ref{Fig:0}, right panel, for more details.
\end{remark}

\subsection{Canonical variables} It turns out that virial identities for \eqref{boussinesq} are not well-suited for the variables $(u,\eta)$, but instead  better suited for natural canonical variables that appear in models involving the nonlocal operator $(1-\px^2)^{-1}$.

\begin{definition}[Canonical variable]\label{Can_Var}
Let $u=u(x) \in L^2$ be a fixed function. We say that $f$ is canonical variable for $u$ if $f$ uniquely solves the equation
\be\label{Canonical}
f- f_{xx} = u, \quad f\in H^2(\R).
\ee
In this case, we simply denote $f=  (1-\partial_x^2)^{-1} u.$
\end{definition}

Canonical variables are standard in equations where the operator $(1-\partial_x^2)^{-1}$ appears; one of the well-known example is given by the Benjamin-Bona-Mahoney BBM equation, see e.g. \cite{ElDika2005-2,ElDika_Martel}.

\medskip

\begin{lemma}[Equivalence of local $L^2$ norms]\label{lem:L2 comparable}
Let $f$ be a canonical variable for $u \in L^2$, as introduced in Definition \ref{Can_Var}. Let $\phi$ be a smooth, bounded positive weight satisfying $|\phi''| \le \lambda \phi$ for some small but fixed $0 < \lambda \ll1$. Then, for any $a_1,a_2,a_3 > 0$, there  exist $c_1, C_1 >0$, depending on $a_j$ ($j=1,2,3$) and $\lambda >0$, such that 
\begin{equation}\label{eq:L2_est}
c_1  \int \phi \, u^2 \le \int \phi\left(a_1f^2+a_2f_x^2+a_3f_{xx}^2\right) \le C_1 \int \phi \, u^2.
\end{equation}
\end{lemma}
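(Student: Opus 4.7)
The plan is to work directly with the defining equation $f-f_{xx}=u$ and exploit that the only obstruction to passing derivatives across the weight $\phi$ is a term controlled by $\phi''$, which the hypothesis $|\phi''|\le\lambda\phi$ makes arbitrarily small.

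\smallskip

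\textbf{Step 1: Upper bound.} First I would multiply the identity $f-f_{xx}=u$ by $\phi f$ and integrate over $\R$. Integrating the term $-\int \phi f f_{xx}$ by parts twice yields
\[
-\int \phi f f_{xx} = \int \phi f_x^2 - \tfrac12\int \phi'' f^2,
\]
so one obtains
\[
\int \phi f^2 + \int \phi f_x^2 - \tfrac12\int \phi'' f^2 = \int \phi f u.
\]
Using $|\phi''|\le\lambda\phi$ with $\lambda$ small, the left-hand side is bounded below by $\tfrac12(\int \phi f^2 + \int \phi f_x^2)$, while the right-hand side is bounded by Cauchy-Schwarz by $(\int \phi f^2)^{1/2}(\int \phi u^2)^{1/2}$. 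This yields $\int\phi f^2 + \int\phi f_x^2 \lesssim \int\phi u^2$. For the second derivative, I would then just write $f_{xx}=f-u$, square, and use the previous bound to conclude $\int\phi f_{xx}^2\lesssim \int\phi u^2$.

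\smallskip

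\textbf{Step 2: Lower bound.} For the reverse inequality, I would expand
\[
\int \phi u^2 = \int \phi(f-f_{xx})^2 = \int\phi f^2 - 2\int\phi f f_{xx} + \int\phi f_{xx}^2,
\]
and again integrate by parts in the cross term, obtaining $-2\int\phi f f_{xx} = 2\int\phi f_x^2 - \int\phi'' f^2$. Thus
\[
\int\phi u^2 = \int\phi f^2 + 2\int\phi f_x^2 + \int\phi f_{xx}^2 - \int\phi'' f^2,
\]
and the hypothesis $|\phi''|\le\lambda\phi$ gives
\[
\int\phi u^2 \le (1+\lambda)\int\phi f^2 + 2\int\phi f_x^2 + \int\phi f_{xx}^2 \le C(a_1,a_2,a_3)\int\phi\bigl(a_1 f^2+a_2 f_x^2+a_3 f_{xx}^2\bigr),
\]
which is the desired lower bound.

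\smallskip

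\textbf{Expected obstacle and remarks.} The only delicate point is making sure the $\int\phi'' f^2$ remainders appearing after each integration by parts can be absorbed; this is exactly what the smallness assumption $|\phi''|\le\lambda\phi$ with $\lambda\ll 1$ is designed to do, and it is clearly used in both directions. One also has to check that the integrations by parts are justified, which follows from $f\in H^2$ (so that $f,f_x\to 0$ at infinity) together with the boundedness of $\phi$ and $\phi'$; no decay of $\phi$ is required. The constants $c_1,C_1$ depend on $a_1,a_2,a_3$ only through the obvious factors $\min\{a_j\}^{-1}$ and $\max\{a_j\}$, respectively, and on $\lambda$ through the absorption step above.
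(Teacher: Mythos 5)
Your proof is correct, and Step 2 is essentially the paper's entire argument: the paper derives the single identity $\int \phi u^2 = \int \phi\left(f^2 + 2f_x^2 + f_{xx}^2\right) - \int \phi''f^2$ and reads \emph{both} inequalities of \eqref{eq:L2_est} off it by absorbing the $\phi''$ term via $|\phi''|\le\lambda\phi$ (bounding $\int\phi u^2$ above by $\int\phi((1+\lambda)f^2+2f_x^2+f_{xx}^2)$ and below by $\int\phi((1-\lambda)f^2+2f_x^2+f_{xx}^2)$, then comparing coefficients with $\min a_j$ and $\max a_j$). The only difference is that for the upper bound in \eqref{eq:L2_est} you take a slightly longer detour, pairing $f-f_{xx}=u$ with $\phi f$ and using Cauchy--Schwarz plus $f_{xx}=f-u$, where the paper just uses the same identity again; both routes are valid.
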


\begin{proof}
A direct calculation using \eqref{Canonical} gives the identity
\begin{equation}\label{eq:L2-1}
\int \phi u^2 = \int \phi\left(f^2 + 2f_x^2 +  f_{xx}^2\right) - \int \phi''f^2.
\end{equation}
From the property of $\phi$, the right-hand side of \eqref{eq:L2-1} is bounded by
\[
\int \phi\left((1+\lambda)f^2 + 2f_x^2 +  f_{xx}^2\right),
\]
for some $0 < \lambda \ll1$. Let $\tilde{a} := \min(a_1,a_2,a_3)$ and $c_1 := \frac{\tilde{a}}{2}$. Then, we have
\[
\begin{aligned}
\int \phi u^2 &\le \int \phi\left((1+\lambda)f^2 + 2f_x^2 +  f_{xx}^2\right)\\
&\le c_1^{-1}\int \phi\left(a_1f^2+a_2f_x^2+a_3f_{xx}^2\right).
\end{aligned}
\]
For the reverse part, we have from \eqref{eq:L2-1} that
\[
\int \phi u^2 \ge \int \phi\left((1-\lambda)f^2 + 2f_x^2 +  f_{xx}^2\right),
\]
for some $0 < \lambda \ll1$.  Let $\tilde{A} := \max(a_1,a_2,a_3)$ and $C_1 := \frac{\tilde{A}}{1-\lambda}$. Then, a direct calculation yields
\[\begin{aligned}
\int \phi u^2 &\ge \int \phi\left((1-\lambda)f^2 + 2f_x^2 +  f_{xx}^2\right)\\
&\ge C_1^{-1}\int \phi\left(a_1f^2+a_2f_x^2+a_3f_{xx}^2\right).
\end{aligned}
\]
The proof is complete.
\end{proof}
The same argument as in the proof of Lemma \ref{lem:L2 comparable} yields the following result.

\begin{lemma}[Equivalence of local $H^1$ norms]\label{lem:H1 comparable}
Let $f$ be a canonical variable for $u \in H^1$ defined as in Definition \ref{Can_Var}. Let $\phi$ be a smooth, bounded positive weight satisfying $|\phi''| \le \lambda \phi$ for some small but fixed $0 < \lambda \ll1$. Then, for any $d_1,d_2,d_3 > 0$, there  exist $c_1, C_1 >0$ depending on $d_j$, $j=1,2,3$, and $\lambda >0$ such that 
\begin{equation}\label{eq:H1_est}
c_2 \int \phi u_x^2 \le  \int \phi\left(d_1f_x^2+d_2f_{xx}^2+d_3f_{xxx}^2\right) \le C_2 \int \phi u_x^2.
\end{equation}
\end{lemma}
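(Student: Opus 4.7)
The plan is to mirror the proof of Lemma \ref{lem:L2 comparable} after first reducing the $H^1$ comparison to an $L^2$ comparison for the derivatives. Since $f$ solves $f - f_{xx} = u$ with $f \in H^2$, and $u \in H^1$, elliptic regularity gives $f \in H^3$, so differentiating the canonical equation once yields
\[
f_x - f_{xxx} = u_x.
\]
This is the direct analogue of \eqref{Canonical} with $u$ replaced by $u_x$ and $f$ replaced by $f_x$, so the same strategy as in the previous lemma applies verbatim to the differentiated equation.

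First I would square and integrate against $\phi$ to obtain
\[
\int \phi u_x^2 = \int \phi f_x^2 - 2\int \phi f_x f_{xxx} + \int \phi f_{xxx}^2.
\]
Integrating the cross term by parts twice (which is legitimate by the $H^3$ regularity of $f$ and the fact that $\phi$ is smooth and bounded), exactly as in the derivation of \eqref{eq:L2-1}, I would get the clean identity
\[
\int \phi u_x^2 = \int \phi\bigl(f_x^2 + 2 f_{xx}^2 + f_{xxx}^2\bigr) - \int \phi'' f_x^2.
\]
No boundary terms arise because $f \in H^3$ forces the relevant products to vanish at infinity, and only one derivative of $\phi$ (up to $\phi''$) ever falls on the weight.

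Using the hypothesis $|\phi''| \le \lambda \phi$ with $\lambda$ small, the error $\int \phi'' f_x^2$ is bounded in absolute value by $\lambda \int \phi f_x^2$, so
\[
(1-\lambda)\int \phi f_x^2 + 2\int \phi f_{xx}^2 + \int \phi f_{xxx}^2 \; \le \; \int \phi u_x^2 \; \le \; (1+\lambda)\int \phi f_x^2 + 2\int \phi f_{xx}^2 + \int \phi f_{xxx}^2.
\]
From here the desired two-sided bound \eqref{eq:H1_est} follows by the same elementary comparison used at the end of the proof of Lemma \ref{lem:L2 comparable}: setting $\tilde d := \min(d_1,d_2,d_3)$ and $\tilde D := \max(d_1,d_2,d_3)$, choose $c_2 := \tilde d / 2$ and $C_2 := \tilde D /(1-\lambda)$, say, and absorb the $\lambda$-factors into these constants.

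There is essentially no obstacle here beyond bookkeeping; the only point to be careful about is the justification of the two integrations by parts, which requires observing that $u \in H^1$ upgrades $f$ from $H^2$ (the space in Definition \ref{Can_Var}) to $H^3$, so that $f_{xxx} \in L^2$ and all boundary terms at infinity vanish. Once that is in place the proof reduces entirely to the identity for $(u_x, f_x, f_{xx}, f_{xxx})$ obtained by differentiating \eqref{Canonical}, and the argument is a verbatim repetition of Lemma \ref{lem:L2 comparable}.
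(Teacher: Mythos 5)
Your proof is correct and is essentially the argument the paper intends: the paper gives no separate proof, stating only that ``the same argument as in the proof of Lemma \ref{lem:L2 comparable}'' applies, and your reduction via the differentiated equation $f_x - f_{xxx} = u_x$ together with the identity $\int \phi u_x^2 = \int \phi\bigl(f_x^2 + 2 f_{xx}^2 + f_{xxx}^2\bigr) - \int \phi'' f_x^2$ is exactly that argument (the identity also appears later as \eqref{eq:H1}). The constant choices and the justification of the integrations by parts via $f \in H^3$ are fine.
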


\medskip

\subsection{Comparison principle in terms of canonical variables}
The following results are well-known in the literature, see El Dika \cite{ElDika2005-2} for further details and proofs.

\begin{lemma}[\cite{ElDika2005-2}]\label{Dika1}
The operator $ (1-\px^2)^{-1}$ satisfies the following comparison principle: for any $u,v\in H^1$,
\begin{equation}\label{eq:inverse op1}
v \le w \quad  \Longrightarrow \quad (1-\px^2)^{-1} v \le (1-\px^2)^{-1} w.
\end{equation}
\end{lemma}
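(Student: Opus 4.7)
The plan is to reduce the statement to the nonnegativity of $g := (1-\partial_x^2)^{-1}(w-v)$, using the linearity of $(1-\partial_x^2)^{-1}$ on $H^{-1}$ (in particular on $L^2$, hence on $H^1$). Set $h := w-v \ge 0$. Then $g \in H^2(\R)$ is the unique solution of $g - g_{xx} = h$, and it suffices to prove $g \ge 0$ pointwise.

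I would give two short, independent arguments and pick whichever is cleanest for the paper. The first is by explicit representation: the Green's function of $1-\partial_x^2$ on the line is $G(x) = \tfrac{1}{2} e^{-|x|}$, which is strictly positive, and
\[
g(x) = \frac12 \int_\R e^{-|x-y|} h(y)\, dy.
\]
Positivity of $g$ is then immediate from $h \ge 0$ and $G>0$. To justify the representation one verifies in the distributional sense that $(1-\partial_x^2) G = \delta_0$, which is a standard one-line computation, and then uses uniqueness of $H^2$ solutions of $g - g_{xx} = h$ (the operator $1-\partial_x^2\colon H^2 \to L^2$ is an isomorphism, a fact already implicit in Definition \ref{Can_Var}).

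The second argument, which avoids the explicit kernel, is a maximum-principle style contradiction. Since $g \in H^2(\R)$, by Sobolev embedding $g \in C^1(\R)$ and $g(x), g'(x) \to 0$ as $|x|\to\infty$. Suppose, for contradiction, that $g$ takes a negative value somewhere. Then $\inf_\R g < 0$ is attained at some point $x_0 \in \R$, where necessarily $g(x_0) < 0$ and $g''(x_0) \ge 0$ (in an a.e. sense, since $g \in H^2$; one mollifies $h$ to $h_\varepsilon \in C^\infty$ to get $g_\varepsilon \in C^\infty$, applies the classical argument to $g_\varepsilon$, and then passes to the limit using $g_\varepsilon \to g$ in $C^1_{loc}$). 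This gives
\[
h(x_0) = g(x_0) - g''(x_0) \le g(x_0) < 0,
\]
contradicting $h \ge 0$. Hence $g \ge 0$ everywhere, which is exactly \eqref{eq:inverse op1}.

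I do not anticipate any serious obstacle: both approaches are routine. The only mildly delicate point is the regularity used to run the maximum principle, which is why I would present the convolution representation as the primary argument and mention the maximum principle as an alternative.
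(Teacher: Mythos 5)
Your primary argument is correct and is exactly the paper's own justification: the paper proves \eqref{eq:inverse op1} simply by noting that $(1-\px^2)^{-1}$ is convolution with the strictly positive kernel $\tfrac12 e^{-|x|}$, so it preserves pointwise order. The maximum-principle alternative you sketch is also sound but is not needed and does not appear in the paper.
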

\eqref{eq:inverse op1} is a consequence of the fact that $(1-\px^2)^{-1} u\sim e^{-|x|} \star u $. Next result is a consequence of Lemma \ref{Dika1} and the Sobolev embedding $u \in H^1 \implies u\in L^\infty$, see the proof of Lemma 2.1 in \cite{ElDika2005-2}.

\begin{lemma}[\cite{ElDika2005-2}]\label{lem:nonlinear1}
Suppose that $\phi =\phi(x)$ is such that 
\begin{equation}\label{eq:inverse op2}
(1-\px^2)^{-1}\phi(x) \lesssim \phi(x), \quad x\in \R,
\end{equation}
for $\phi(x) > 0$ satisfying $|\phi^{(n)}(x)| \lesssim \phi(x)$, $n \ge 0$. Then, for $v,w,h \in H^1$, we have
\begin{equation}\label{eq:nonlinear1-1}
\int \phi^{(n)} v (1-\px^2)^{-1}(wh)_x ~\lesssim ~ \norm{v}_{H^1} \int \phi (w^2 + w_x^2 +h^2 + h_x^2),
\end{equation}
and
\begin{equation}\label{eq:nonlinear1-2}
\int\phi^{(n)} v (1-\px^2)^{-1}(wh) ~\lesssim ~\norm{v}_{H^1} \int \phi(w^2 +h^2).
\end{equation}
\end{lemma}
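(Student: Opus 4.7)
The whole strategy consists in exploiting the explicit convolution representation $(1-\partial_x^2)^{-1} g = \tfrac12 e^{-|\cdot|} \star g$ together with the self-adjointness of $(1-\partial_x^2)^{-1}$ on $L^2$ (its kernel is symmetric and positive), so as to transfer the nonlocal operator from the product $wh$ onto the weight $\phi$, where the structural hypothesis \eqref{eq:inverse op2} provides the gain. The comparison principle of Lemma \ref{Dika1} is what permits this transfer.

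\textbf{Step 1: the basic estimate \eqref{eq:nonlinear1-2}.} First, because $|\phi^{(n)}| \lesssim \phi$ and $\phi > 0$, I reduce the integrand to $\phi \, |v|\, |(1-\partial_x^2)^{-1}(wh)|$. By Lemma \ref{Dika1} applied to the pointwise inequalities $-|wh| \le wh \le |wh|$, together with the positivity of the kernel,
\[
|(1-\partial_x^2)^{-1}(wh)| \le (1-\partial_x^2)^{-1}|wh|.
\]
Pulling out $\|v\|_{L^\infty} \lesssim \|v\|_{H^1}$ (Sobolev embedding in one dimension), I reduce matters to controlling $\int \phi\,(1-\partial_x^2)^{-1}|wh|$. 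The self-adjointness of $(1-\partial_x^2)^{-1}$ swaps this to $\int |wh|\,(1-\partial_x^2)^{-1}\phi$, and hypothesis \eqref{eq:inverse op2} then replaces $(1-\partial_x^2)^{-1}\phi$ by $\phi$. To finish, the elementary bound $|wh| \le \tfrac12(w^2+h^2)$ produces the right-hand side of \eqref{eq:nonlinear1-2}.

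\textbf{Step 2: reducing \eqref{eq:nonlinear1-1} to Step 1.} Using only the product rule $(wh)_x = w_x h + w h_x$ together with the linearity of $(1-\partial_x^2)^{-1}$,
\[
\int \phi^{(n)} v\, (1-\partial_x^2)^{-1}(wh)_x = \int \phi^{(n)} v\, (1-\partial_x^2)^{-1}(w_x h) + \int \phi^{(n)} v\, (1-\partial_x^2)^{-1}(w h_x).
\]
Each summand is of the form treated in Step 1, with the pairs $(w_x,h)$ and $(w,h_x)$ playing the role of $(w,h)$. Applying \eqref{eq:nonlinear1-2} to each and summing yields the bound $\|v\|_{H^1} \int \phi(w_x^2+h^2+w^2+h_x^2)$, which is exactly \eqref{eq:nonlinear1-1}.

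\textbf{Main obstacle.} There is essentially no analytic difficulty; everything reduces to the duality identity $\int \phi\, (1-\partial_x^2)^{-1} g = \int g\, (1-\partial_x^2)^{-1}\phi$ together with the structural hypothesis \eqref{eq:inverse op2}. The one subtle point worth emphasizing is that one should resist the temptation to integrate by parts in order to push $\partial_x$ off the nonlocal operator: that would introduce an $L^\infty$ norm of $v_x$, which is not controlled by the $H^1$ norm of $v$ in one dimension. The product-rule trick used in Step 2 cleanly avoids this and passes the derivative losses onto $w$ and $h$, where the $H^1$ assumption absorbs them.
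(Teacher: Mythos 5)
Your proof is correct and follows exactly the route the paper itself indicates for this lemma (which it does not prove in detail but attributes to El Dika): the comparison principle of Lemma \ref{Dika1} applied to $-|wh|\le wh\le |wh|$, the Sobolev embedding $H^1\hookrightarrow L^\infty$ for $v$, the self-adjointness of $(1-\px^2)^{-1}$ to transfer the operator onto $\phi$ where hypothesis \eqref{eq:inverse op2} applies, and the product rule to reduce \eqref{eq:nonlinear1-1} to \eqref{eq:nonlinear1-2}. No gaps; the concluding remark about why one should not integrate by parts onto $v$ is also well taken.
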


We will also need modified versions of Lemma \ref{lem:nonlinear1}, which will be useful when estimating nonlinear terms in the energy estimate (Section \ref{ENERGY}).

\begin{lemma}\label{lem:nonlinear2}
Under the same condition as in Lemma \ref{lem:nonlinear1}, we have
\begin{equation}\label{eq:nonlinear2-1}
\int (\phi v_x)_x (1-\px^2)^{-1}(wh) \lesssim \norm{v}_{H^1} \int \phi (w^2 + w_x^2 +h^2 + h_x^2).
\end{equation}
\end{lemma}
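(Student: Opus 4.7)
The plan is to reduce the left-hand side to quantities already controlled by Lemma \ref{lem:nonlinear1}, using two integrations by parts together with the canonical identity $F - F_{xx} = wh$, where $F := (1-\px^2)^{-1}(wh)$. The only apparent difficulty is that $(\phi v_x)_x$ formally contains $v_{xx}$ while the hypothesis only gives $v\in H^1$; the idea is to move both derivatives onto $\phi F$ before estimating, and then convert the second derivative $F_{xx}$ into zero-th order terms via the canonical identity.

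First, I would integrate by parts twice in $x$ (boundary terms vanish by decay of Sobolev functions and the localization provided by $\phi$) to write
\[
\int (\phi v_x)_x\, (1-\px^2)^{-1}(wh) \;=\; -\int \phi v_x F_x \;=\; \int v\,\bigl(\phi' F_x + \phi F_{xx}\bigr).
\]
Using $F_x = (1-\px^2)^{-1}(wh)_x$ and $F_{xx} = F - wh$, the right-hand side splits as
\[
\int \phi' v\,(1-\px^2)^{-1}(wh)_x \;+\; \int \phi v\,(1-\px^2)^{-1}(wh) \;-\; \int \phi v\, wh.
\]

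Then I would bound each of the three terms separately. The first is controlled by \eqref{eq:nonlinear1-1} applied with $\phi^{(n)}=\phi'$ (noting $|\phi'|\lesssim \phi$ by hypothesis), giving $\lesssim \|v\|_{H^1}\int\phi(w^2+w_x^2+h^2+h_x^2)$. The second is directly \eqref{eq:nonlinear1-2} with $n=0$, giving $\lesssim \|v\|_{H^1}\int\phi(w^2+h^2)$. The third is a straightforward $L^\infty$--$L^2$ estimate: by the Sobolev embedding $\|v\|_{L^\infty}\lesssim \|v\|_{H^1}$ together with $2|wh|\le w^2+h^2$, one obtains $\bigl|\int \phi v\, wh\bigr|\lesssim \|v\|_{H^1}\int \phi(w^2+h^2)$. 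Summing the three contributions yields the claimed inequality.

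I do not expect any significant obstacle: the only delicate point is the double integration by parts that transfers both derivatives off $v$ onto $\phi F$, after which the canonical identity $F_{xx}=F-wh$ does all the remaining work, reducing everything to the already-proved estimates of Lemma \ref{lem:nonlinear1}. In particular, no regularity beyond $v\in H^1$ is required, which is exactly what is needed for Lemma \ref{lem:nonlinear2} to be useful in the subsequent energy estimates.
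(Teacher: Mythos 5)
Your proof is correct and arrives, after two integrations by parts and the identity $F_{xx}=F-wh$, at exactly the same three-term decomposition $\int\phi' v\,(1-\px^2)^{-1}(wh)_x+\int\phi v\,(1-\px^2)^{-1}(wh)-\int\phi v\,wh$ that the paper obtains via the product rule $(\phi v_x)_x=(\phi v)_{xx}-(\phi'v)_x$; the subsequent estimation by Lemma \ref{lem:nonlinear1} and Cauchy--Schwarz is also the same. This is essentially the paper's proof, differing only in the order of the elementary manipulations.
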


\begin{proof}
The product rule gives
\begin{equation}\label{eq:product}
\phi v_x = (\phi v)_x - \phi'v \quad \Longrightarrow \quad  (\phi v_x)_x = (\phi v)_{xx} - (\phi'v)_x.
\end{equation}
\eqref{eq:product} and the integration by parts lead to
\[\begin{aligned}
\mbox{LHS of }\eqref{eq:nonlinear2-1} =&~{} \int \left( (\phi v)_{xx} - (\phi'v)_x \right)(1-\px^2)^{-1}(wh) \\
=&~{}-\int\phi vwh + \int\phi v(1-\px^2)^{-1}(wh)\\ 
&+ \int \phi'(x)v(1-\px^2)^{-1}(w_xh+wh_x).
\end{aligned}\]
Thus, the Cauchy-Schwarz inequality and Lemma \ref{lem:nonlinear1} proves \eqref{eq:nonlinear2-1}.
\end{proof}

\begin{lemma}\label{lem:nonlinear3}
Under the same condition as in Lemma \ref{lem:nonlinear1}, we have
\begin{equation}\label{eq:nonlinear3-1}
\int \phi v_x (1-\px^2)^{-1} (wh)_x \lesssim \norm{v}_{H^1} \int \phi (w^2 + w_x^2 +h^2 + h_x^2).
\end{equation}
\end{lemma}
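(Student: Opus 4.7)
The plan is to reduce Lemma \ref{lem:nonlinear3} to direct applications of Lemma \ref{lem:nonlinear1}, together with Cauchy--Schwarz and Sobolev embedding. The central algebraic trick is the identity
\[
(1-\px^2)^{-1}\px^2 f \;=\; -f + (1-\px^2)^{-1} f,
\]
which trades a second $x$-derivative applied to the nonlocal resolvent for a purely local product plus a smoothing piece. This identity is the natural way to avoid any genuinely new estimate.

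First, I would apply the product rule $\phi v_x = (\phi v)_x - \phi' v$ to split the integral as
\[
\int \phi v_x (1-\px^2)^{-1}(wh)_x \;=\; \int (\phi v)_x (1-\px^2)^{-1}(wh)_x \;-\; \int \phi' v\, (1-\px^2)^{-1}(wh)_x.
\]
The second term is precisely of the form treated in \eqref{eq:nonlinear1-1} with $\phi^{(n)}=\phi'$ (using the assumption $|\phi'|\lesssim \phi$), so it is bounded by $\norm{v}_{H^1}\int \phi(w^2+w_x^2+h^2+h_x^2)$.

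For the first term, I would integrate by parts in $x$ to move the derivative onto the resolvent, obtaining
\[
\int (\phi v)_x (1-\px^2)^{-1}(wh)_x \;=\; -\int \phi v\, (1-\px^2)^{-1}\px^2(wh).
\]
Using the identity above with $f=wh$, this equals
\[
\int \phi v\, wh \;-\; \int \phi v\, (1-\px^2)^{-1}(wh).
\]
The first piece is controlled by Cauchy--Schwarz together with $\norm{v}_{L^\infty}\lesssim \norm{v}_{H^1}$, yielding a bound by $\norm{v}_{H^1}\int \phi(w^2+h^2)$. The second piece is exactly \eqref{eq:nonlinear1-2}, so it is bounded by the same quantity.

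I do not anticipate a serious obstacle. The only care is in justifying the integration by parts (no boundary contributions, automatic since $\phi$ is bounded and $v,w,h\in H^1$) and in absorbing $\phi'$-weighted integrals into $\phi$-weighted ones via the standing hypothesis $|\phi^{(n)}|\lesssim \phi$. Combining all three bounds yields \eqref{eq:nonlinear3-1}.
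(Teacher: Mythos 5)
Your proposal is correct and follows essentially the same route as the paper: the same product-rule splitting $\phi v_x = (\phi v)_x - \phi' v$, the same integration by parts, and the same resolvent identity $(1-\px^2)^{-1}\px^2 = -1 + (1-\px^2)^{-1}$ (the paper's \eqref{eq:trick1}), with the resulting three terms bounded by Lemma \ref{lem:nonlinear1} and Cauchy--Schwarz exactly as you describe. The only cosmetic difference is that the paper expands $(1-\px^2)^{-1}(wh)_x$ as $(1-\px^2)^{-1}(w_xh+wh_x)$ before invoking \eqref{eq:nonlinear1-2}, whereas you cite \eqref{eq:nonlinear1-1} directly; both are valid.
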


\begin{proof}
The left-hand side of \eqref{eq:product} and the integration by parts allow us to get
\[\begin{aligned}
\mbox{LHS of }\eqref{eq:nonlinear3-1} =&~ {}-\int \phi' v (1-\px^2)^{-1} (w_xh + wh_x) \\
&  - \int \phi v (1-\px^2)^{-1} (wh)_{xx} \\
=&~ {}-\int \phi' v (1-\px^2)^{-1} (w_xh + wh_x) \\
&  + \int \phi vwh - \int \phi v (1-\px^2)^{-1}(wh) \\
\end{aligned}\]
Thus, the Cauchy-Schwarz inequality and Lemma \ref{lem:nonlinear1} proves \eqref{eq:nonlinear3-1}.
\end{proof}

\bigskip
\section{Virial functionals}\label{VIRIAL}

\medskip

This Section is devoted to the introduction and study of three virial functionals, and their behavior under the $H^1\times H^1$ flow.  Let $\vp=\vp(x)$ be a smooth, bounded weight function, to be chosen later. For each $t\in\R$, we consider the following functionals for some $\vp$ (to be chosen later):
\be\label{I}
\mathcal I(t) := \int \vp(x)(u\eta + \px u \px\eta)(t,x)dx,
\ee
\be\label{J}
\mathcal J(t) := \int \vp'(x)(\eta \px u)(t,x)dx,
\ee
and
\be\label{K}
\mathcal K(t) := \int \vp'(x)(\px\eta u)(t,x)dx.
\ee
Clearly each functional above is well-defined for $H^1\times H^1$ functions, as long as the pair $(u,\eta)(t=0)$ is small in the energy space.

\subsection{Virial functional $\mathcal I(t)$} Using \eqref{boussinesq} and integration by parts, we have the following result.

\begin{lemma}\label{Virial_bous}
For any $t\in \R$,
\be\label{Virial0}
\begin{aligned}
\frac{d}{dt} \mathcal I(t) = &~  {}  -\frac{a}2 \int  \varphi' u_x^2-\frac{c}2 \int  \varphi' \eta_x^2 \\
& ~ {} - \left(a+\frac12\right)   \int  \varphi' u^2 -\left( c+ \frac12 \right)  \int  \varphi' \eta^2 \\
& ~ {} + (1+ a)\int \varphi' u (1-\partial_x^2)^{-1} u  + (1+ c)\int \varphi' \eta (1-\partial_x^2)^{-1} \eta  \\
&~ {}     -\frac12 \int  \varphi' u^2 \eta   + \int \varphi' u (1-\partial_x^2)^{-1}\left(u \eta \right)  + \frac12\int \varphi' \eta (1-\partial_x^2)^{-1}\left(u^2\right) .
\end{aligned}
\ee
\end{lemma}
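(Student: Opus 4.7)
The identity is established by direct time-differentiation, systematic integration by parts, and substitution of the equations. Set $L := (1-\partial_x^2)^{-1}$; from \eqref{boussinesq} one obtains
\[
u_t = -\partial_x L\bigl(c\eta_{xx} + \eta + \tfrac12 u^2\bigr), \qquad \eta_t = -\partial_x L\bigl(au_{xx} + u + u\eta\bigr),
\]
so every quantity arising after differentiation of $\mathcal I$ can be re-expressed through $u,\eta$ and $L$ applied to products of them. Differentiating under the integral and moving the $x$-derivatives off $u_t,\eta_t$ in the mixed terms $\varphi u_{tx}\eta_x$ and $\varphi u_x \eta_{tx}$ yields
\[
\frac{d}{dt}\mathcal{I}(t) = \int \bigl[\varphi\eta - (\varphi\eta_x)_x\bigr]\, u_t\, dx + \int \bigl[\varphi u - (\varphi u_x)_x\bigr]\, \eta_t\, dx,
\]
where each bracketed factor equals $\varphi(1-\partial_x^2)(\cdot) - \varphi'(\cdot)_x$.

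\medskip

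\textbf{Key manipulation.} I would then use the commutator identity
\[
\varphi\,(1-\partial_x^2) h = (1-\partial_x^2)(\varphi h) - \varphi'' h - 2\varphi' h_x,
\]
together with the self-adjointness of $L$ and $L\partial_x = \partial_x L$, to make the pieces containing $(1-\partial_x^2)h$ cancel $L$ and produce purely \emph{local} integrals, while the residual $\varphi''h + 2\varphi' h_x$ combined with the $\varphi'(\cdot)_x$ pieces give the nonlocal contributions. Further integrations by parts then gather the right-hand side into the form stated in \eqref{Virial0}: the dispersive blocks $au_{xx},\ c\eta_{xx}$ produce the $-\tfrac{a}{2}\varphi' u_x^2$ and $-\tfrac{c}{2}\varphi'\eta_x^2$ terms; the linear mass pieces $u,\eta$ simultaneously yield the local $-(a+\tfrac12)\varphi' u^2,\ -(c+\tfrac12)\varphi'\eta^2$ and the nonlocal $(1+a)\int\varphi'\, u\, Lu$, $(1+c)\int\varphi'\,\eta\, L\eta$ contributions; the nonlinearities $\tfrac12 u^2,\, u\eta$ give respectively $-\tfrac12\int\varphi' u^2\eta$, $\int\varphi' u\, L(u\eta)$ and $\tfrac12 \int \varphi'\eta\, L(u^2)$.

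\medskip

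\textbf{Consistency check and main obstacle.} Throughout the computation, a useful guide is that when $\varphi \equiv 1$ the functional $\mathcal{I}$ reduces to the conserved momentum $P[u,\eta] = \int(u\eta + u_x\eta_x)$ of \eqref{boussinesq} (see \cite{BCS2}), so every surviving term on the right-hand side of \eqref{Virial0} must carry at least one derivative of $\varphi$. This is visible in the claimed formula and rules out any $\varphi$-independent contribution. The main obstacle is purely organizational: one must keep track of a rather long sequence of integrations by parts mixing the nonlocal operator $L$ with the smooth weight $\varphi$, with careful attention to signs and to where $\varphi'$ versus $\varphi''$ appears; the momentum-conservation consistency check catches arithmetic slips immediately.
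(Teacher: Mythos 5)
Your plan follows essentially the same route as the paper's proof: differentiate $\mathcal I$, integrate by parts so that $(1-\partial_x^2)$ ends up acting where the system can be substituted (producing the purely local blocks), while the leftover $\varphi'$-terms acquire $(1-\partial_x^2)^{-1}$ and give the nonlocal contributions; the paper organizes this through the groupings $I_1+I_4$ and $I_2+I_3$ rather than through your commutator identity, but the computation is the same and your term-by-term attribution matches the paper's $\tilde I_1,\dots,\tilde I_4$. One correction is needed: the commutator identity should read $\varphi\,(1-\partial_x^2)h = (1-\partial_x^2)(\varphi h) + \varphi'' h + 2\varphi' h_x$, since $(1-\partial_x^2)(\varphi h) = \varphi(1-\partial_x^2)h - \varphi'' h - 2\varphi' h_x$; with the minus signs you wrote, the nonlocal corrections would come out with the wrong sign, and note that your $\varphi\equiv 1$ consistency check cannot catch this particular slip because both versions of the identity coincide when $\varphi'=\varphi''=0$.
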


\begin{proof}
We compute:
\[
\begin{aligned}
\frac{d}{dt} \mathcal I(t) =&~  \int \varphi (\eta_t u + \eta u_t + u_{tx} \eta_x + u_x \eta_{tx} )\\
=&~  \int \varphi (\eta_t -\eta_{txx}) u + \int  \varphi (u_t  -u_{txx}) \eta -\int \varphi' \eta u_{tx} -\int \varphi' u \eta_{tx}.
\end{aligned}
\]
Replacing \eqref{boussinesq}, and integrating by parts, we get
\[
\begin{aligned}
\frac{d}{dt} \mathcal I(t) =&~  \int (\varphi  u )_x (a u_{xx} +u +u\eta ) + \int  (\varphi \eta)_x \left(c\eta_{xx} +\eta + \frac12 u^2\right)\\
&~ {} + \int (\varphi' \eta)_x u_{t} + \int (\varphi' u)_x \eta_{t} \\
= :&~ I_1 + I_2 +I_3 +I_4.
\end{aligned}
\]
First of all, using \eqref{boussinesq},
\[
\begin{aligned}
I_3= &~  \int (\varphi' \eta)_x u_{t} =  \int (\varphi' \eta)_{xx} (1-\partial_x^2)^{-1}\left(c\eta_{xx} +\eta +\frac12 u^2\right) \\
=&~   \int \left( (\varphi' \eta)_{xx} - \varphi' \eta \right) (1-\partial_x^2)^{-1}\left(c\eta_{xx} +\eta +\frac12 u^2\right) \\
&~ {}  + \int \varphi' \eta (1-\partial_x^2)^{-1}\left(c\eta_{xx} +\eta +\frac12 u^2\right) \\
=&~  - \int  \varphi' \eta\left(c\eta_{xx} +\eta +\frac12 u^2\right) \\
&~ {}  + \int \varphi' \eta (1-\partial_x^2)^{-1}\left(c\eta_{xx} +\eta +\frac12 u^2\right) .
\end{aligned}
\]
Therefore,
\[
I_2 + I_3 = \int  \varphi \eta_x \left(c\eta_{xx} +\eta + \frac12 u^2\right)+ \int \varphi' \eta (1-\partial_x^2)^{-1}\left(c\eta_{xx} +\eta +\frac12 u^2\right).
\]
Similarly,
\[
\begin{aligned}
I_4  =&~  - \int  \varphi' u \left(au_{xx} +u +u \eta \right) \\
&~ {}  + \int \varphi' u (1-\partial_x^2)^{-1}\left(a u_{xx} + u + u \eta\right),
\end{aligned}
\]
and
\[
I_1 + I_4 = \int  \varphi u_x \left(au_{xx} +u +u \eta \right)+ \int \varphi' u (1-\partial_x^2)^{-1}\left(a u_{xx} +u +u \eta \right).
\]
We conclude that 
\[
\begin{aligned}
\frac{d}{dt}\mathcal I(t) = &~ \int  \varphi u_x \left(au_{xx} +u +u \eta \right)+ \int \varphi' u (1-\partial_x^2)^{-1}\left(a u_{xx} +u +u \eta \right)\\
&~ {} + \int  \varphi \eta_x \left(c\eta_{xx} +\eta + \frac12 u^2\right)+ \int \varphi' \eta (1-\partial_x^2)^{-1}\left(c\eta_{xx} +\eta +\frac12 u^2\right) \\
= : &~ \tilde I_1 + \tilde I_2 + \tilde I_3 +\tilde I_4.
\end{aligned}
\]
Now we compute $\tilde I_j$. First,
\[
\tilde I_1 =  -\frac{a}2 \int  \varphi' u_x^2  -\frac12  \int  \varphi' u^2  -\frac12 \int  \varphi' u^2 \eta - \frac12 \int \varphi u^2 \eta_x. 
\]
Second,
\[
\tilde I_3=  -\frac{c}2 \int  \varphi' \eta_x^2 -\frac12 \int  \varphi' \eta^2 + \frac12 \int  \varphi \eta_x u^2.
\]
Consequently,
\[
\tilde I_1+ \tilde I_3 = -\frac{a}2 \int  \varphi' u_x^2-\frac{c}2 \int  \varphi' \eta_x^2   -\frac12  \int  \varphi' (u^2 +\eta^2)  -\frac12 \int  \varphi' u^2 \eta.
\]
On the other hand,
\[
\begin{aligned}
\tilde I_2 =  &~  \int \varphi' u (1-\partial_x^2)^{-1}\left(a u_{xx} +u +u \eta \right) \\
= &~ {} a\int \varphi' u (1-\partial_x^2)^{-1} (u_{xx} -u) \\
&~ {}+ (1+ a)\int \varphi' u (1-\partial_x^2)^{-1} u   + \int \varphi' u (1-\partial_x^2)^{-1}\left(u \eta \right) \\
= &~ {}  - a\int \varphi' u^2 + (1+ a)\int \varphi' u (1-\partial_x^2)^{-1} u   + \int \varphi' u (1-\partial_x^2)^{-1}\left(u \eta \right).
\end{aligned}
\]
Similarly,
\[
\begin{aligned}
\tilde I_4 =  &~  \int \varphi' \eta (1-\partial_x^2)^{-1}\left(c\eta_{xx} +\eta +\frac12 u^2\right) \\
= &~ {} c \int \varphi' \eta (1-\partial_x^2)^{-1} (\eta_{xx} -\eta ) \\
&~ {}+ (1+ c)\int \varphi' \eta (1-\partial_x^2)^{-1} \eta   +\frac12 \int \varphi' \eta (1-\partial_x^2)^{-1}\left(u^2\right) \\
= &~ {}  - c\int \varphi' \eta^2 + (1+ c)\int \varphi' \eta (1-\partial_x^2)^{-1} \eta   + \frac12\int \varphi' \eta (1-\partial_x^2)^{-1}\left(u^2\right).
\end{aligned}
\]
We conclude that 
\[
\begin{aligned}
\frac{d}{dt} \mathcal I(t) = &~ \tilde I_1 + \tilde I_2 + \tilde I_3 +\tilde I_4 \\
=&~  {}  -\frac{a}2 \int  \varphi' u_x^2-\frac{c}2 \int  \varphi' \eta_x^2   -\frac12  \int  \varphi' (u^2 +\eta^2)  -\frac12 \int  \varphi' u^2 \eta \\
& ~ {} - a\int \varphi' u^2 + (1+ a)\int \varphi' u (1-\partial_x^2)^{-1} u   + \int \varphi' u (1-\partial_x^2)^{-1}\left(u \eta \right) \\
&~ {} - c\int \varphi' \eta^2 + (1+ c)\int \varphi' \eta (1-\partial_x^2)^{-1} \eta   + \frac12\int \varphi' \eta (1-\partial_x^2)^{-1}\left(u^2\right) \\
=&~  {}  -\frac{a}2 \int  \varphi' u_x^2-\frac{c}2 \int  \varphi' \eta_x^2   - \left(a+\frac12\right)   \int  \varphi' u^2 -\left( c+ \frac12 \right)  \int  \varphi' \eta^2 \\
& ~ {} + (1+ a)\int \varphi' u (1-\partial_x^2)^{-1} u  + (1+ c)\int \varphi' \eta (1-\partial_x^2)^{-1} \eta  \\
&~ {}     -\frac12 \int  \varphi' u^2 \eta   + \int \varphi' u (1-\partial_x^2)^{-1}\left(u \eta \right)  + \frac12\int \varphi' \eta (1-\partial_x^2)^{-1}\left(u^2\right) .
\end{aligned}
\]
This last equality proves \eqref{Virial0}. 
\end{proof}

\subsection{Virial functional $\mathcal J(t)$}

In what follows, we recall the system \eqref{boussinesq}-\eqref{Conds} written in the equivalent form
\begin{equation}\label{eq:abcd}
\begin{cases}
\pt \eta  = a \px u -(1+a)(1-\partial_x^2)^{-1}\px u - (1-\partial_x^2)^{-1}\px(u\eta) \\
\pt u  = c \px \eta -(1+c)(1-\partial_x^2)^{-1}\px \eta - (1-\partial_x^2)^{-1}\px(\frac12u^2).
\end{cases}
\end{equation}

\begin{lemma}\label{lem:J}
For any $t \in \R$,
\begin{equation}\label{eq:J-1}
\begin{aligned}
\frac{d}{dt} \mathcal J(t)=&~  (1+c)\int\vp'\eta^2 - c\int\vp' \eta_x^2  -(1+a)\int\vp' u^2 + a \int\vp' u_x^2\\
&-(1+c)\int\vp'\eta\nlop\eta + (1+a)\int\vp' u \nlop u\\
&+(1+a) \int \vp''u \nlop u_x + \frac{c}{2}\int\vp''' \eta^2\\
&-\frac12\int\vp' u^2\eta -\frac12\int\vp' \eta\nlop \left( u^2 \right)  \\
&+ \int \vp' u\nlop \left( u\eta \right) +\int \vp'' u\nlop ( u\eta )_x.
\end{aligned}
\end{equation}
\end{lemma}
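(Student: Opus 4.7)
The computation is routine in spirit but requires one careful nonlocal integration-by-parts trick. I would proceed in three steps.

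\textbf{Step 1: Differentiate and substitute.} Starting from $\mathcal J(t) = \int \vp' \eta\, u_x\,dx$, the product rule gives
\[
\frac{d}{dt}\mathcal J(t) = \int \vp' (\pt\eta)\, u_x\,dx + \int \vp' \eta\, (\pt u_x)\,dx,
\]
and I would substitute \eqref{eq:abcd} into the first integrand, and write $\pt u_x = \px(\pt u)$ followed by a substitution into the second. This produces six pieces: two purely ``linear local'' pieces $a\!\int\vp' u_x^2$ and $c\!\int\vp' \eta\,\eta_{xx}$, two ``linear nonlocal'' pieces $-(1{+}a)\!\int\vp' u_x\nlop u_x$ and $-(1{+}c)\!\int\vp'\eta\,\nlop\eta_{xx}$, and two ``nonlinear nonlocal'' pieces $-\!\int\vp' u_x\nlop(u\eta)_x$ and $-\tfrac12\!\int\vp'\eta\,\nlop(u^2)_{xx}$.

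\textbf{Step 2: Key nonlocal identity.} The terms which are not immediately in the target form are the two involving $\nlop w_x$ against $u_x$. For any sufficiently regular $u,w$ I claim
\begin{equation}\label{eq:key-id-plan}
-\int \vp' u_x \nlop w_x\,dx = \int \vp' u\,\nlop w\,dx - \int \vp' uw\,dx + \int \vp'' u\,\nlop w_x\,dx.
\end{equation}
The proof is the \emph{only} nontrivial idea in the lemma: since $(1-\px^2)\nlop w = w$, we have the pointwise identity $\nlop w_{xx} = \nlop w - w$, so on one hand
\[
\int \vp' u\,\nlop w_{xx}\,dx = \int \vp' u\,\nlop w\,dx - \int\vp' uw\,dx,
\]
while on the other, writing $\nlop w_{xx} = \px(\nlop w_x)$ and integrating by parts,
\[
\int \vp' u\,\nlop w_{xx}\,dx = -\int \vp'' u\,\nlop w_x\,dx - \int \vp' u_x\,\nlop w_x\,dx.
\]
Equating these and rearranging gives \eqref{eq:key-id-plan}. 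I then apply \eqref{eq:key-id-plan} with $w=u$ (producing the block $-(1{+}a)\!\int\vp' u^2 + (1{+}a)\!\int\vp' u\nlop u + (1{+}a)\!\int\vp'' u\nlop u_x$) and with $w = u\eta$ (producing $-\!\int\vp' u^2\eta + \int\vp' u\nlop(u\eta) + \int\vp'' u\nlop(u\eta)_x$).

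\textbf{Step 3: Remaining terms and bookkeeping.} For the ``$\pt u_x$'' contributions I would simply integrate by parts twice in $\int \vp'\eta\,\eta_{xx}$ to obtain $\tfrac{c}{2}\!\int\vp'''\eta^2 - c\!\int\vp'\eta_x^2$, and directly apply the pointwise identity $\nlop v_{xx} = \nlop v - v$ (with $v = \eta$ and $v = u^2$) to convert $-(1{+}c)\!\int\vp'\eta\,\nlop\eta_{xx}$ and $-\tfrac12\!\int\vp'\eta\,\nlop(u^2)_{xx}$ into the terms $(1{+}c)\!\int\vp'\eta^2 - (1{+}c)\!\int\vp'\eta\,\nlop\eta$ and $\tfrac12\!\int\vp' u^2\eta - \tfrac12\!\int\vp'\eta\,\nlop(u^2)$ respectively. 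Assembling everything, all terms match those in \eqref{eq:J-1}; the only arithmetic point is that the two cubic contributions $-\!\int\vp' u^2\eta$ (from Step 2 with $w = u\eta$) and $+\tfrac12\!\int\vp' u^2\eta$ (from the $\nlop(u^2)_{xx}$ reduction) combine to the stated coefficient $-\tfrac12$.

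\textbf{Expected difficulty.} There is no conceptual obstacle — the only subtle point is resisting the temptation to introduce the canonical variable $F = \nlop u$ (which works but leaves one to re-express $F, F_x, F_{xx}$ back in terms of $u$ and $\nlop u$); the identity \eqref{eq:key-id-plan} bypasses this and delivers exactly the nonlocal terms appearing in the statement. Everything else is careful bookkeeping.
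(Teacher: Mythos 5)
Your proposal is correct and follows essentially the same route as the paper: your key nonlocal identity is exactly the paper's integration by parts on $-\int\vp' u_x\nlop w_x$ combined with its trick \eqref{eq:trick1} (i.e. $\nlop w_{xx}=\nlop w-w$ tested against $\vp' u$), applied with $w=u$ and $w=u\eta$, and the remaining bookkeeping (the double integration by parts giving $\tfrac{c}{2}\int\vp'''\eta^2-c\int\vp'\eta_x^2$ and the cancellation producing the coefficient $-\tfrac12$ on $\int\vp' u^2\eta$) matches the paper's treatment of $J_1$ and $J_2$ term by term.
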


\begin{proof}
We compute, using \eqref{eq:abcd},
\[\begin{aligned}
\frac{d}{dt} \mathcal J(t) =&\int\vp'\left(\eta_t u_x + \eta u_{xt}\right)\\
=&\int \vp'u_x\left(a u_x - (1+a)(1-\px^2)^{-1}u_x - (1-\px^2)^{-1}(u\eta)_x\right)\\
&+\int\vp \eta \left(c \eta_{xx} - (1+c)(1-\px^2)^{-1}\eta_{xx} - \frac12(1-\px^2)^{-1}( u^2 )_{xx}\right)\\
=: & ~J_1 +J_2.
\end{aligned}\]
We first deal with $J_1$. The integration by parts yields
\begin{equation}\label{eq:J1-1}
\begin{aligned}
J_1=& ~{} a \int \vp'u_x^2 - (1+a)\int\vp' u_x \nlop u_x - \int\vp' u_x \nlop (u\eta)_x\\
=& ~ {} a \int \vp'u_x^2+(1+a)\int\vp' u \nlop u_{xx}\\ 
&+ (1+a)\int\vp'' u \nlop  u_x \\
&+\int\vp' u \nlop (u\eta)_{xx} + \int\vp'' u \nlop (u\eta)_x.
\end{aligned}
\end{equation}
We use
\begin{equation}\label{eq:trick1}
\int \psi w \nlop z_{xx} = -\int \psi  wz + \int \psi  w \nlop z
\end{equation}
for the second and fourth terms in the right-hand side of \eqref{eq:J1-1} to obtain
\begin{equation}\label{eq:J1-2}
\begin{aligned}
J_1=&~{}-(1+a)\int \vp' u^2 + a \int \vp' u_x^2\\
&+ (1+a)\int\vp' u \nlop u +(1+a)\int\vp''u\nlop u_x\\
&-\int\vp'u^2\eta + \int\vp'u\nlop(u\eta) +\int \vp''u\nlop (u\eta)_x.
\end{aligned}
\end{equation}
For $J_2$, using the integration by parts and \eqref{eq:trick1} with the identity
$
ff_{xx} = \frac12(f^2)_{xx} - f_x^2,
$ 
 yields
\begin{equation}\label{eq:J2}
\begin{aligned}
J_2=&~ c\int \vp'\eta \eta_{xx} - (1+c)\int \vp' \eta \nlop \eta_{xx}\\
&-\frac12\int\vp'\eta \nlop (u^2)_{xx}\\
=&~ (1+c)\int\vp'\eta^2 - c\int\vp'\eta_x^2 + \frac{c}{2}\int\vp'''\eta^2\\
&-(1+c)\int\vp'\eta\nlop\eta\\
&+\frac12\int\vp'u^2\eta-\frac12\int\vp'\eta\nlop(u^2).
\end{aligned}
\end{equation}
By collecting all \eqref{eq:J1-2} and \eqref{eq:J2}, we have \eqref{eq:J-1}.
\end{proof}

\subsection{Virial functional $\mathcal K(t)$} 
In view of the $abcd$ system \eqref{eq:abcd}, one can realize that both equations have a sort of weakly symmetric structure (up to constants $a, c$, and nonlinearities). A slight modification in Lemma \ref{lem:J} (using the replacements $u \leftrightarrow \eta$, $a \leftrightarrow c$ and $u\eta \leftrightarrow \frac12u^2$) provides the following result for the virial functional $\mathcal{K}(t)$. The interested reader may check the details.

\begin{lemma}\label{lem:K}
For any $t \in \R$,
\begin{equation}\label{eq:K-1}
\begin{aligned}
\frac{d}{dt} \mathcal K(t)=&~ {} -(1+c)\int\vp'\eta^2 + c\int\vp'\eta_x^2 +(1+a)\int\vp' u^2 - a \int\vp' u_x^2\\
&+(1+c)\int\vp'\eta\nlop\eta - (1+a)\int\vp'u \nlop u\\
&+(1+c) \int \vp''\eta \nlop \eta_x + \frac{a}{2}\int\vp'''u^2\\
& - \int \vp'u\nlop(u\eta) +\frac12\int\vp'\eta\nlop(u^2)  \\
&+\frac12\int\vp'u^2\eta +\frac12\int \vp''\eta\nlop(u^2)_x.
\end{aligned}
\end{equation}
\end{lemma}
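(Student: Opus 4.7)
The plan is to derive \eqref{eq:K-1} by a direct computation that closely mirrors the proof of Lemma \ref{lem:J}. Differentiating in time and using $\mathcal K(t) = \int \vp' u \eta_x$, I would first write
\[
\frac{d}{dt}\mathcal K(t) = \int \vp' u_t \eta_x + \int \vp' u \eta_{xt} =: K_1 + K_2,
\]
substituting into $K_1$ the equation for $u_t$ from \eqref{eq:abcd}, and into $K_2$ the equation for $\eta_{xt} = \px \eta_t$. This gives
\[
K_1 = c \int \vp' \eta_x^2 - (1+c) \int \vp' \eta_x \nlop \eta_x - \frac{1}{2} \int \vp' \eta_x \nlop (u^2)_x,
\]
\[
K_2 = a \int \vp' u u_{xx} - (1+a) \int \vp' u \nlop u_{xx} - \int \vp' u \nlop (u\eta)_{xx}.
\]

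Next I would simplify each piece using integration by parts together with the identity \eqref{eq:trick1}, namely $\int \psi w \nlop z_{xx} = -\int \psi w z + \int \psi w \nlop z$. For $K_1$, integrating by parts in the second and third terms transfers the outer $\px$ onto $\vp'$ (producing $\vp''$-contributions) and onto $\eta$, and the identity \eqref{eq:trick1} applied to the resulting $\nlop \eta_{xx}$ and $\nlop (u^2)_{xx}$ generates the local pieces $-(1+c)\int \vp' \eta^2$ and $-\frac{1}{2}\int \vp' u^2 \eta$, together with the nonlocal corrections $(1+c)\int \vp' \eta \nlop \eta$, $(1+c)\int \vp'' \eta \nlop \eta_x$, $\frac{1}{2}\int \vp' \eta \nlop (u^2)$, and $\frac{1}{2}\int \vp'' \eta \nlop (u^2)_x$. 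For $K_2$, the first term $a\int \vp' u u_{xx}$ is handled with the algebraic identity $u u_{xx} = \frac{1}{2}(u^2)_{xx} - u_x^2$, yielding $\frac{a}{2}\int \vp''' u^2 - a\int \vp' u_x^2$ after two integrations by parts, while \eqref{eq:trick1} applied to the remaining two terms produces $(1+a)\int\vp' u^2$, $-(1+a)\int\vp' u \nlop u$, $\int \vp' u^2 \eta$, and $-\int \vp' u \nlop (u\eta)$. Summing $K_1$ and $K_2$ and combining the two resulting $u^2\eta$ contributions ($-\frac{1}{2}$ from $K_1$ and $+1$ from $K_2$) into a single $+\frac{1}{2}\int \vp' u^2 \eta$ reproduces \eqref{eq:K-1}.

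As a sanity check, the structural parallelism between the two equations of \eqref{eq:abcd} — namely that formally swapping $u \leftrightarrow \eta$, $a \leftrightarrow c$, and $u\eta \leftrightarrow \frac{1}{2}u^2$ interchanges the two equations and at the same time exchanges the integrands of $\mathcal J$ and $\mathcal K$ — makes the computation for $\mathcal K$ parallel term-by-term to that for $\mathcal J$, which is the shortcut the author alludes to. The main obstacle is purely bookkeeping: the several nonlocal nonlinear terms ($u \nlop (u\eta)$, $\eta \nlop (u^2)$, $\vp'' \eta \nlop (u^2)_x$, etc.) have subtly different structures, so that signs and the numerical factors of $\frac{1}{2}$ must be tracked carefully in order to land on \eqref{eq:K-1} exactly. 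There is, however, no conceptual difficulty beyond what already appears in proving Lemma \ref{lem:J}.
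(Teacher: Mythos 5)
Your computation is correct — each term checks out (in particular the two $u^2\eta$ contributions, $-\tfrac12$ from $K_1$ and $+1$ from $K_2$, do combine to the $+\tfrac12\int\vp' u^2\eta$ in \eqref{eq:K-1}) — and it follows essentially the same route as the paper, which states that Lemma \ref{lem:K} is obtained from the proof of Lemma \ref{lem:J} via the replacements $u\leftrightarrow\eta$, $a\leftrightarrow c$, $u\eta\leftrightarrow\tfrac12u^2$ and leaves the details to the reader. You have simply carried out those details explicitly, using the same splitting, integrations by parts, and the identity \eqref{eq:trick1} as in Lemma \ref{lem:J}.
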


\medskip

\subsection{Modified virial} Now we construct a global virial from a linear combination of $\mathcal I(t),\mathcal  J(t)$ and $\mathcal K(t)$. 

\medskip

For $\alpha$ and $\beta$ real numbers, we define the modified virial
\be\label{H}
\mathcal H(t):= \mathcal H_{\al,\bt}(t):= \mathcal I(t) + \alpha \mathcal J(t) + \beta \mathcal K(t).
\ee
Thanks to Lemmas \ref{Virial_bous}, \ref{lem:J} and \ref{lem:K}, we obtain the following direct consequence:

\begin{proposition}[Decomposition of $\frac{d}{dt}\mathcal H(t)$]\label{prop:general virial}
Let $\eta$ and $u$ satisfy \eqref{eq:abcd}. For any $\alpha, \beta \in \R$ and any $t \in \R$, we have the decomposition
\begin{equation}\label{eq:gvirial}
\frac{d}{dt}\mathcal H(t) = \mathcal Q(t) + \mathcal{SQ}(t) + \mathcal{NQ}(t),
\end{equation}
where $\mathcal Q(t) =\mathcal Q[u,\eta](t) $ is the quadratic form
\begin{equation}\label{eq:leading}
\begin{aligned}
\mathcal Q(t) :=& ~\left((1+c)(\alpha-\beta-1) + \frac12\right)\int \vp' \eta^2 +c\Big(\beta - \alpha -\frac12\Big)\int \vp' \eta_x^2 \\
&+\left((1+a)(\beta -\alpha-1) + \frac12\right)\int \vp' u^2 +a\Big(\alpha-\beta-\frac12\Big)\int \vp' u_x^2 \\
&+(1+c)(\beta - \alpha + 1)\int \vp' \eta \nlop \eta\\
&+(1+a)(\alpha - \beta +1)\int \vp' u \nlop u ,
\end{aligned}
\end{equation}
$ \mathcal{SQ}(t)$ represents lower order quadratic terms not included in $\mathcal Q(t)$:
\begin{equation}\label{eq:small linear}
\begin{aligned}
 \mathcal{SQ}(t) :=& ~ \beta(1+c)\int\vp'' \eta \nlop \eta_x + \alpha(1+a)\int\vp'' u \nlop u_x\\
&+\frac{\alpha c}{2} \int \vp''' \eta^2 + \frac{\beta a}{2} \int \vp'''  u^2,
\end{aligned}
\end{equation}
and  $\mathcal{NQ}(t)$ are truly cubic order terms or higher:
\begin{equation}\label{eq:nonlinear}
\begin{aligned}
 \mathcal{NQ}(t) :=& ~\frac12(\beta - \alpha -1)\int\vp' u^2\eta + \frac12(\beta -\alpha + 1)\int \vp' \eta \nlop (u^2)\\
&+ (\alpha - \beta +1)\int \vp'  u \nlop (u\eta) + \frac{\beta}{2}\int \vp'' \eta \nlop (u^2)_x\\
&+ \alpha\int \vp'' u \nlop (u\eta)_x.
\end{aligned}
\end{equation}
\end{proposition}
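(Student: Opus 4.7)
The proof of Proposition \ref{prop:general virial} will be a direct corollary of Lemmas \ref{Virial_bous}, \ref{lem:J} and \ref{lem:K}. The plan is to use linearity of time-differentiation to write
$$\frac{d}{dt}\mathcal H(t) = \frac{d}{dt}\mathcal I(t) + \alpha \frac{d}{dt}\mathcal J(t) + \beta \frac{d}{dt}\mathcal K(t),$$
substitute the three identities \eqref{Virial0}, \eqref{eq:J-1} and \eqref{eq:K-1}, and then collect, for each distinct integrand that appears on the resulting right-hand side, its total coefficient.

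First I would classify the integrands into three mutually disjoint families matching the three target quantities $\mathcal Q(t)$, $\mathcal{SQ}(t)$, $\mathcal{NQ}(t)$: (i) six quadratic \emph{leading} integrands weighted by $\vp'$ --- namely $\int \vp' \eta^2$, $\int \vp' \eta_x^2$, $\int \vp' u^2$, $\int \vp' u_x^2$, and the two nonlocal $\int \vp' w \nlop w$ with $w = \eta, u$; (ii) four lower-order quadratic integrands weighted by $\vp''$ or $\vp'''$; (iii) five cubic integrands. For the leading piece, each of the six integrands appears in all three lemmas, so I would sum the three contributions in each case and verify the result agrees with \eqref{eq:leading}; as a representative sanity check, the coefficient of $\int \vp' \eta^2$ collects as $-(c+\tfrac12) + \alpha(1+c) - \beta(1+c) = (1+c)(\alpha-\beta-1)+\tfrac12$, and the remaining five are analogous.

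For $\mathcal{SQ}(t)$ the identification is essentially immediate, since no integrand weighted by $\vp''$ or $\vp'''$ appears in $\frac{d}{dt}\mathcal I$: such contributions originate exclusively from the integrations-by-parts performed inside Lemmas \ref{lem:J} and \ref{lem:K}, each multiplied by the scalar $\alpha$ or $\beta$, which reproduces \eqref{eq:small linear} without further work. For the cubic piece $\mathcal{NQ}(t)$ I would proceed as for $\mathcal Q$: the three integrands $\int \vp' u^2\eta$, $\int \vp' \eta \nlop (u^2)$, $\int \vp' u \nlop (u\eta)$ occur in all three lemmas and require a three-way coefficient tally, whereas the two $\vp''$-weighted cubic integrands occur in only one lemma each and simply inherit the prefactor $\alpha$ or $\beta$. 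No conceptual obstacle is anticipated; the entire proof is mechanical bookkeeping. The only practical risk is an arithmetic or sign slip when tallying the six $\mathcal Q$-coefficients or the three ``three-way'' cubic coefficients, so I would cross-check each of those nine coefficients independently before declaring the proof complete.
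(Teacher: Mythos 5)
Your proposal is correct and coincides with the paper's own argument: the paper states Proposition \ref{prop:general virial} as a direct consequence of Lemmas \ref{Virial_bous}, \ref{lem:J} and \ref{lem:K}, obtained precisely by forming $\frac{d}{dt}\mathcal I + \alpha\frac{d}{dt}\mathcal J + \beta\frac{d}{dt}\mathcal K$ and collecting coefficients. Your sample tally for $\int\vp'\eta^2$ (and the classification of integrands by weight $\vp'$, $\vp''/\vp'''$, and cubic order) checks out against \eqref{eq:leading}--\eqref{eq:nonlinear}.
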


\begin{remark}
Note that all members in $ \mathcal{SQ}(t)$ \eqref{eq:small linear} contains terms  of the form $\vp^{(n)}(x)$, $n=2,3$, and it makes each term be as small as we want by choosing an appropriate weight function and using rescaling arguments. Also, all terms in $ \mathcal{NQ}(t)$ will  not be of great importance, because they will be bounded by quantities smaller than any term in $\mathcal Q(t)$.
\end{remark}

\bigskip

\section{Estimates in canonical variables and positivity}\label{sec:general virial}

\medskip

\subsection{Passage to canonical variables}

We focus on the quadratic form $\mathcal{Q}(t)$ in \eqref{eq:leading}. We introduce canonical variables for $u$ and $\eta$ (see Definition \ref{Can_Var}) as follows:
\begin{equation}\label{eq:fg}
f := \nlop u \quad \mbox{and} \quad g := \nlop \eta.
\end{equation} 
Note that, for $u, \eta \in H^1$, one has $f$ and $g$ in $H^3$. A direct calculation shows the following key relationships between $f$ and $u$ (resp. $g$ and $\eta$), see also Lemmas \ref{lem:L2 comparable} and \ref{lem:H1 comparable} for similar statements.
\begin{lemma}
One has
\begin{equation}\label{eq:L2}
\int \vp' u^2 = \int\vp'\left(f^2 + 2f_x^2 + f_{xx}^2\right) - \int \vp'''f^2,
\end{equation}
\begin{equation}\label{eq:H1}
\int \vp'u_x^2 = \int\vp'\left(f_x^2 + 2f_{xx}^2 + f_{xxx}^2\right) - \int \vp'''f_x^2
\end{equation}
and
\begin{equation}\label{eq:nonlocal}
\int \vp' u \nlop u = \int\vp'\left(f^2 + f_x^2\right) - \frac12\int \vp'''f^2.
\end{equation}
\end{lemma}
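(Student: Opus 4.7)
The plan is to derive the three identities by direct substitution of the canonical relation $u = f - f_{xx}$ (and the corresponding $\eta = g - g_{xx}$, though only $u$ appears in these three formulas), followed by integration by parts against the weight $\vp'$. All three identities are algebraic consequences of the same mechanism, so my strategy is to establish one reusable lemma and then apply it twice with different substitutions.

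For \eqref{eq:L2}, I would square $u = f - f_{xx}$ to get $u^2 = f^2 - 2 f f_{xx} + f_{xx}^2$, multiply by $\vp'$ and integrate. The only nontrivial piece is the cross term $-2\int \vp' f f_{xx}$. Integrating by parts once puts a derivative on $\vp' f$, producing $2\int \vp' f_x^2 + 2\int \vp'' f f_x$; the second contribution equals $\int \vp'' (f^2)_x = -\int \vp''' f^2$, which gives exactly the claimed formula. For \eqref{eq:H1}, the identity $u_x = f_x - f_{xxx}$ lets me run the same calculation with $f$ replaced by $f_x$ throughout, so the proof reduces to the previous line.

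For \eqref{eq:nonlocal}, I would use that by definition $\nlop u = f$, so $\int \vp' u\,\nlop u = \int \vp' u f = \int \vp'(f - f_{xx}) f = \int \vp' f^2 - \int \vp' f f_{xx}$. The remaining integral is handled exactly as above, except that now the coefficient is $-1$ instead of $-2$, which produces the factor $\tfrac12$ in front of $\int \vp''' f^2$ and the single $\int \vp' f_x^2$ term.

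I do not foresee any genuine obstacle here: this lemma is a purely algebraic identity, and all three formulas fall out of a single integration-by-parts pattern. The only point requiring mild care is bookkeeping of boundary terms, but since $u, \eta \in H^1$ forces $f, g \in H^3$ and $\vp$ together with its derivatives is bounded, all integrations by parts are justified and the boundary contributions at $\pm \infty$ vanish. One minor sanity check I would perform at the end is to verify the three formulas are consistent with Lemmas \ref{lem:L2 comparable}--\ref{lem:H1 comparable} in the regime $|\vp'''| \ll \vp'$, where they reduce to the expected equivalence of norms.
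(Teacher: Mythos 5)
Your proposal is correct and coincides with the paper's own (implicit) argument: the paper labels this a ``direct calculation'' and carries out exactly the same substitution $u = f - f_{xx}$ followed by integration by parts in the proof of Lemma \ref{lem:L2 comparable} (see identity \eqref{eq:L2-1}, which is \eqref{eq:L2} with $\phi = \vp'$). The bookkeeping of the cross terms, including the factor $\tfrac12$ in \eqref{eq:nonlocal}, checks out.
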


\medskip

Using \eqref{eq:L2}-\eqref{eq:nonlocal}, we can rewrite the quadratic form $\mathcal{Q}(t)$ as follows:
\begin{lemma}\label{lem:leading}
Let $f$ and $g$ be canonical variables of $u$ and $\eta$ as in \eqref{eq:fg}. Consider the quadratic form $\mathcal Q(t)$ given in \eqref{eq:leading}. Then we have
\begin{equation}\label{eq:leading-1}
\begin{aligned}
\mathcal Q(t) =& \int \vp' \Big( A_1 f^2 + A_2 f_x^2 + A_3 f_{xx}^2 + A_4 f_{xxx}^2\Big)\\
&+\int \vp' \Big( B_1 g^2 + B_2 g_x^2 + B_3 g_{xx}^2 + B_4 g_{xxx}^2\Big)\\
&+\int \vp''' \Big(D_{11}f^2 + D_{12}f_x^2 + D_{21}g^2 + D_{22}g_x^2\Big),
\end{aligned}
\end{equation}
where
\begin{equation}\label{eq:A1B1}
A_1 = B_1 = \frac12>0,
\end{equation}

\begin{equation}\label{eq:A2B2}
A_2 = \beta - \alpha -\frac{3a}{2}, \qquad B_2 = \alpha - \beta - \frac{3c}{2},
\end{equation}

\begin{equation}\label{eq:A3B3}
A_3 = (1-a)(\beta - \alpha) -2a - \frac12, \qquad B_3 = (1-c)(\alpha- \beta) -2c - \frac12,
\end{equation}

\begin{equation}\label{eq:A4B4}
A_4= a\left(\alpha - \beta - \frac12\right), \qquad B_4= c\left(\beta - \alpha - \frac12\right),
\end{equation}
and
\begin{equation}\label{eq:D1D2}
\begin{aligned}
&D_{11} =-\frac12(1+a)(\beta - \alpha - 1) - \frac12 , \qquad D_{12}=-a\left(\alpha - \beta - \frac12\right),\\ 
&D_{21}= -\frac12(1+c)(\alpha - \beta - 1) - \frac12,\qquad D_{22}=-c\left(\beta - \alpha - \frac12\right).
\end{aligned}
\end{equation}
\end{lemma}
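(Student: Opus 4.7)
The proof is essentially a direct substitution followed by careful bookkeeping, since the quadratic form $\mathcal Q(t)$ in \eqref{eq:leading} decouples into a $u$-part and an $\eta$-part with no cross terms, and for each component we have exactly the three identities \eqref{eq:L2}, \eqref{eq:H1}, \eqref{eq:nonlocal} (and their obvious $c\leftrightarrow a$, $\eta\leftrightarrow u$, $g\leftrightarrow f$ analogues).

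The plan is to write $\mathcal Q(t) = \mathcal Q_u(t) + \mathcal Q_\eta(t)$, group in $\mathcal Q_u$ the coefficients
\[
\gamma_1 := (1+a)(\beta-\alpha-1)+\tfrac12, \quad \gamma_2 := a(\alpha-\beta-\tfrac12), \quad \gamma_3:=(1+a)(\alpha-\beta+1),
\]
substitute \eqref{eq:L2}--\eqref{eq:nonlocal} so that
\[
\mathcal Q_u(t) = \gamma_1\!\!\int\!\vp'(f^2+2f_x^2+f_{xx}^2) + \gamma_2\!\!\int\!\vp'(f_x^2+2f_{xx}^2+f_{xxx}^2) + \gamma_3\!\!\int\!\vp'(f^2+f_x^2)
\]
plus $\vp'''$-corrections $-\gamma_1\int\vp'''f^2 -\gamma_2\int\vp''' f_x^2 -\tfrac12\gamma_3\int\vp''' f^2$, and then collect coefficients of $\int\vp' f^k$ and $\int\vp''' f^k$ for $k=0,1,2,3$ derivatives. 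The coefficient of $\int\vp' f^2$ is $\gamma_1+\gamma_3$, which equals $\tfrac12$ because the $(1+a)$ contributions from $\gamma_1$ and $\gamma_3$ cancel identically in $\alpha-\beta$; this yields $A_1=\tfrac12$. The coefficients of $\int\vp' f_x^2$, $\int\vp' f_{xx}^2$ and $\int\vp' f_{xxx}^2$ are $2\gamma_1+\gamma_2+\gamma_3$, $\gamma_1+2\gamma_2$ and $\gamma_2$, which after routine simplification give exactly $A_2$, $A_3$ and $A_4$ as stated. The $\vp'''$ terms yield $D_{11}$ and $D_{12}$ directly.

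For the $\eta$-part I would repeat the computation with the replacements $a\leftrightarrow c$, $u\leftrightarrow \eta$, $f\leftrightarrow g$ and $(\beta-\alpha-1)\leftrightarrow(\alpha-\beta-1)$, which by inspection of \eqref{eq:leading} is exactly the symmetry between the first two and the third/fourth lines there (together with the swap of the nonlocal-line coefficients). This gives $B_1,\ldots,B_4$ and $D_{21},D_{22}$ with the claimed form.

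There is no genuine obstacle here beyond bookkeeping; the only non-mechanical observation is the cancellation $\gamma_1+\gamma_3 = \tfrac12$, which is why $A_1 = B_1 = \tfrac12$ is independent of $a,c,\alpha,\beta$. This structural fact will be essential later when controlling the $f^2$, $g^2$ terms uniformly in the free parameters $\alpha,\beta$, so it is worth isolating it explicitly rather than leaving it buried in the algebra.
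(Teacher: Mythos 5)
Your proposal is correct and follows exactly the paper's own (one-line) proof: direct substitution of \eqref{eq:L2}--\eqref{eq:nonlocal} into \eqref{eq:leading} and collection of coefficients, with the $\eta$-part obtained by the symmetry $a\leftrightarrow c$, $f\leftrightarrow g$, $\alpha-\beta\leftrightarrow\beta-\alpha$. The explicit identification of the cancellation $\gamma_1+\gamma_3=\tfrac12$ behind $A_1=B_1=\tfrac12$ is a worthwhile observation that the paper leaves implicit.
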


\begin{proof}
Direct from the substitution of \eqref{eq:L2}-\eqref{eq:nonlocal} into \eqref{eq:leading}.
\end{proof} 

\begin{remark}\label{rem:small linear}
A slight modification of \eqref{eq:nonlocal} also allows us to rewrite the  first two terms in $\mathcal{SQ}(t)$ \eqref{eq:small linear} as follows: 
\begin{equation}\label{eq:small linear1} 
\beta(1+c)\int\vp''\eta \nlop \eta_x = - \frac12\beta(1+c) \int \vp''' (g^2 - g_x^2),
\end{equation}
and
\begin{equation}\label{eq:small linear2}
\alpha(1+a)\int\vp''u \nlop u_x = - \frac12 \alpha(1+a) \int \vp''' (f^2 - f_x^2).
\end{equation}
These terms, however, will be negligible compared with the terms in $\mathcal{Q}(t)$. 
\end{remark}

\subsection{Positivity}\label{Positivity} We want to study positivity (or negativity) properties of the bilinear form $\mathcal{Q}(t)$, namely, to decide under which conditions $\mathcal{Q}(t)$ in \eqref{eq:leading-1} has a unique definite sign.

\medskip

It turns out that $A_1$ and $B_1$ in \eqref{eq:A1B1} are positive, so the only possibility is to obtain $\mathcal{Q}(t)$ positive definite. Then, a sufficient condition for getting this goal is to impose
\be\label{positive}
A_k>0, \quad B_k>0, \quad k=2,3,4.
\ee
Note that the coefficients $D_k$ may have any possible sign, this is because we will assume that $\varphi'''$ is as smaller as we want, compared to $\varphi'$. 

\medskip

The following lemma characterizes condition \eqref{positive} in terms of the  \emph{dispersion-like parameters} introduced in Definition \ref{Dis_Par}.
\begin{lemma}[Positivity vs. dispersion]\label{lem:conditions}
Let $a, c < 0$. Then,  $A_k > 0$ and $B_k > 0$ for $k=2,3,4$  if and only if $(a,c)$ are dispersion-like parameters. That is to say,
\begin{equation}\label{eq:conditions1}
3(a+c) + 2 - 8ac < 0.
\end{equation} 
\end{lemma}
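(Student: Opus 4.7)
The plan is to introduce the single parameter $\ga := \beta - \alpha$ and rewrite the six positivity requirements as interval constraints on $\ga$, after which the lemma reduces to a one-dimensional intersection problem that is settled by explicit algebraic manipulations. Substituting into \eqref{eq:A2B2}--\eqref{eq:A4B4}, the conditions $A_k,B_k>0$ for $k=2,3,4$ become, respectively,
\[
\ga\in I_1:=\left(\tfrac{3a}{2},\,-\tfrac{3c}{2}\right),\qquad \ga\in I_2:=\left(-\tfrac12,\tfrac12\right),\qquad \ga\in I_3:=\left(\tfrac{2a+1/2}{1-a},\,\tfrac{-2c-1/2}{1-c}\right).
\]
Since $a,c<0$, both $I_1$ and $I_2$ contain $0$, so $I_1\cap I_2\neq\emptyset$, and the lemma amounts to characterizing when $I_1\cap I_2\cap I_3\neq\emptyset$.

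The necessity direction is direct: if $\ga$ satisfies all three inclusions, then $I_3\neq\emptyset$, i.e.\ $\tfrac{2a+1/2}{1-a}<\tfrac{-2c-1/2}{1-c}$; cross-multiplying by $(1-a)(1-c)>0$ and simplifying gives exactly \eqref{eq:conditions1}. For sufficiency, assume \eqref{eq:conditions1}, so $I_3\neq\emptyset$, and apply Helly's theorem for intervals on the real line: it suffices to verify that $I_1\cap I_3$ and $I_2\cap I_3$ are each nonempty. The first follows from $I_3\subset I_1$, which in turn is a consequence of
\[
\tfrac{2a+1/2}{1-a}-\tfrac{3a}{2}=\tfrac{3a^2+a+1}{2(1-a)}>0,\qquad -\tfrac{3c}{2}-\tfrac{-2c-1/2}{1-c}=\tfrac{3c^2+c+1}{2(1-c)}>0,
\]
the numerators having negative discriminant and the denominators being positive. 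For $I_2\cap I_3$: if $a,c\geq -\tfrac23$, the identity $\tfrac{2a+1/2}{1-a}+\tfrac12=\tfrac{3a+2}{2(1-a)}\geq 0$ and its symmetric counterpart give $I_3\subset I_2$; otherwise, the estimate $\tfrac{-2c-1/2}{1-c}+\tfrac12=\tfrac{-5c/2}{1-c}>0$ (and its symmetric version) shows the endpoints of $I_2\cap I_3$ are correctly ordered, and hence the intersection is nonempty.

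The main obstacle is verifying the equivalence in the ``overhang'' regime where $I_3$ extends past $I_2$: in that case existence of $\ga$ is automatic, so the ``if and only if'' is clean only if \eqref{eq:conditions1} is automatic there as well. Setting $f(a,c):=3(a+c)+2-8ac$, one computes $f(-\tfrac23,c)=\tfrac{25c}{3}<0$ for $c<0$ and $\partial_a f=3-8c>0$, so $f<0$ throughout $\{a\leq -\tfrac23,\;c<0\}$, and symmetrically on $\{a<0,\;c\leq -\tfrac23\}$. Thus these corner regions automatically lie inside the dispersion set \eqref{eq:conditions1}, closing the equivalence.
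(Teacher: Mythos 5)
Your proof is correct, and its skeleton is the same as the paper's: each pair of positivity conditions cuts out a diagonal band of slope one in the $(\alpha,\beta)$-plane, so everything reduces to intersecting three intervals in the single variable $\gamma=\beta-\alpha$, with $I_3\neq\emptyset$ being exactly \eqref{eq:conditions1}. Where you genuinely diverge is in the two reduction steps. First, you invoke Helly's theorem in dimension one where the paper argues by hand that pairwise intersection of same-slope bands forces a triple intersection; this is a clean shortcut. Second, and more substantively, for $I_1\cap I_3$ (the paper's $\mathcal{A}_2\cap\mathcal{A}_3$) you prove the stronger containment $I_3\subset I_1$ via the discriminant-negative quadratics $3a^2+a+1$ and $3c^2+c+1$, whereas the paper only verifies the overlap conditions \eqref{eq:conditions6}, passing through the equivalent inequalities \eqref{Final_condition} and the comparison of the curves $\Gamma_1,\Gamma_2$ culminating in \eqref{eq:region}; your containment is shorter and makes $I_1\cap I_3=I_3\neq\emptyset$ immediate. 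Two cosmetic remarks: your Case 2 bounds $\frac{-5a}{2(1-a)}>0$ and $\frac{-5c}{2(1-c)}>0$ hold for all $a,c<0$ (they are precisely the paper's \eqref{eq:conditions5}), so the case split on $a,c\gtrless-\frac23$ for $I_2\cap I_3$ is unnecessary; and the final ``overhang'' paragraph is a consistency check already subsumed by the equivalence between $I_3\neq\emptyset$ and \eqref{eq:conditions1}, hence logically redundant, though harmless.
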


Later we will prove (see Lemma \ref{b29}) that \eqref{eq:conditions1} is satisfied provided $b>\frac29$. See Fig. \ref{Fig:5} for a picture of the region of pairs $(a,c)\in\R^2$ for which \eqref{eq:conditions1} is satisfied, in terms of the parameter $b>\frac29$.

\begin{proof}[Proof of Lemma \ref{lem:conditions}]
First of all, we have from \eqref{eq:A4B4} and the (negative) signs of $a,c$ that $A_4$ and $B_4$ are positive whenever $(\alpha,\beta)$ belongs to the set
\begin{equation}\label{eq:conditions2}
\mathcal{A}_4 := \left\{(\alpha,\beta) \in \R^2: \alpha - \frac12 < \beta < \alpha + \frac12 \right\},
\end{equation}
This is a set consisting of a diagonal band around zero, see Fig. \ref{Fig:1}. Note that $\mathcal{A}_4 $ is never empty, and $(0,0)$ is always included (recall that $(0,0)$ corresponds to the case where $\mathcal J$ and $\mathcal K$ are not included (nor necessary) in $\mathcal H$, see \eqref{H}).

\begin{figure}[h!]
\begin{center}
\begin{tikzpicture}[scale=0.8]
\filldraw[thick, color=lightgray!25] (-1,-1)--(1,-1) -- (5,3) --(5,5) -- (-1,-1);
\draw[thick,dashed] (-1,-1)--(5,5);
\draw[thick,dashed] (1,-1)--(5,3);
\draw[->] (-1,1) -- (5,1) node[below] {$\alpha$};
\draw[->] (2,-1) -- (2,5) node[right] {$\beta$};
\node at (2.4,-0.1){$-\frac 12$};
\node at (1.7,2.2){$\frac 12$};
\node at (4.3,3.3){$\mathcal{A}_4$};
\end{tikzpicture}
\qquad 
\begin{tikzpicture}[scale=0.8]
\filldraw[thick, color=lightgray!25] (-1,-1)--(1,-1) -- (5,3) --(5,5) -- (-1,-1);
\draw[thick,dashed] (-1,-1)--(5,5);
\draw[thick,dashed] (1,-1)--(5,3);
\draw[->] (-1,1) -- (5,1) node[below] {$\alpha$};
\draw[->] (2,-1) -- (2,5) node[right] {$\beta$};
\node at (2.3,-0.2){$\frac {3a}2$};
\node at (1.5,2.2){$-\frac {3c}2$};
\node at (4.3,3.3){$\mathcal{A}_2$};
\end{tikzpicture}
\end{center}
\caption{(\emph{Left}). The set $\mathcal{A}_4 $ defined in \eqref{eq:conditions2}. Note that $(0,0)$ is always included in  $\mathcal{A}_4 $, meaning that, for having $A_4$ and $B_4$ both positive, $\mathcal J$ and $\mathcal K$ are not needed. (\emph{Right}). The set $\mathcal{A}_2 $ defined in \eqref{eq:conditions3}. This set may or may not be contained in $\mathcal{A}_4$, depending on the values of $a$ and $c$. It also may be the case that $\mathcal{A}_2 $ contains $\mathcal{A}_4 $. Its boundary never crosses the origin, and it is not necessarily symmetric with respect to the line $\bt = \al$. Therefore, for having $A_2,B_2>0$, both $\mathcal J$ and $\mathcal K$ are not needed.} \label{Fig:1}
\end{figure}
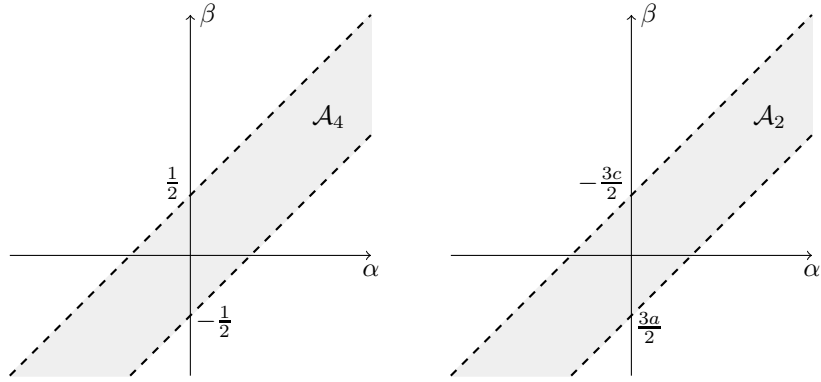
 
\medskip

From \eqref{eq:A2B2}, we see that both $A_2 $ and $B_2 $ are positive if $(\alpha,\beta)$ belongs to the set
\begin{equation}\label{eq:conditions3}
\mathcal{A}_2 := \left\{(\alpha,\beta) \in \R^2: \alpha + \frac{3a}{2} < \beta < \alpha - \frac{3c}{2}\right\}.
\end{equation}
(See Fig. \ref{Fig:1}.) This set makes sense if and only if 
\[
\frac{3a}{2} < - \frac{3c}{2} \Longleftrightarrow a + c < 0,
\] 
which is a direct consequence of the fact that $a<0$ and $c<0$. Hence, always the set $\mathcal{A}_2$ is nonempty and $(0,0)$ is contained. Note additionally that $ \frac{3a}{2}<0$ and $- \frac{3c}{2}>0$, so the boundary of $\mathcal{A}_2$ never crosses the origin. However, depending on the values of $a$ and $c$, they might contain, be contained, or not contain $\mathcal{A}_4$.

\medskip

Finally, we deal with the positivity of the terms $A_3$ and $B_3$ in \eqref{eq:A3B3}. It is not difficult to show that  $A_3 > 0 $ and $B_3 > 0$ whenever $(\alpha,\beta)$ belongs to the set
\begin{equation}\label{eq:conditions4}
\mathcal{A}_3 := \left\{(\alpha,\beta) \in \R^2: \alpha + \frac{1+4a}{2(1-a)} < \beta < \alpha - \frac{1+4c}{2(1-c)} \right\}.
\end{equation}
(See Fig. \ref{Fig:3}.) Note that the set $\mathcal{A}_3$ is well-defined, since $a,c < 0$. We know that $\mathcal{A}_3 \neq \emptyset$ if and only if 
\[
\frac{1+4a}{2(1-a)} < - \frac{1+4c}{2(1-c)},
\] 
which is nothing but the dispersion-like condition \eqref{eq:conditions1}. Therefore, we have $\mathcal{A}_2$, $\mathcal{A}_3$ and $\mathcal{A}_4$ nonempty.

\begin{figure}[h!]
\begin{center}
\begin{tikzpicture}[scale=0.8]
\filldraw[thick, color=lightgray!25] (-1,-1)--(1,-1) -- (5,3) --(5,5) -- (-1,-1);
\draw[thick,dashed] (-1,-1)--(5,5);
\draw[thick,dashed] (1,-1)--(5,3);
\draw[->] (-1,1) -- (5,1) node[below] {$\alpha$};
\draw[->] (2,-1) -- (2,5) node[right] {$\beta$};
\node at (2.7,-0.1){$ \frac{1+4a}{2(1-a)}$};
\node at (1.2,2.2){$- \frac{1+4c}{2(1-c)}$};
\node at (4.3,3.3){$\mathcal{A}_3$};
\end{tikzpicture}
\end{center}
\caption{The set $\mathcal{A}_3 $ defined in \eqref{eq:conditions4}. Contrary to other sets $\mathcal{A}_4$ and $\mathcal{A}_2$, this set \emph{may not contain} the point $(0,0)$, depending on the sign of one of the terms $ \frac{1+4a}{2(1-a)}$ or $- \frac{1+4c}{2(1-c)}$. In this case, the perturbations $\mathcal J$ and $\mathcal K$ in \eqref{H} are fundamental and necessary.} \label{Fig:3}
\end{figure}
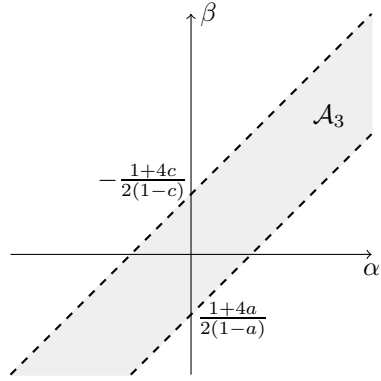
 
\medskip

It remains to show $\mathcal{A}_2 \cap \mathcal{A}_3 \cap \mathcal{A}_4 \neq \emptyset$, namely there exists parameters $(\al,\bt)$ such that \eqref{positive} is satisfied.

\medskip

First of all, the conditions $a, c < 0$ guarantee
\[\frac{3a}{2}  < \frac12 \qquad \mbox{and} \qquad -\frac12 < - \frac{3c}{2},\]
which imply $\mathcal{A}_4 \cap \mathcal{A}_2 \neq \emptyset$ without any further conditions on $a$ and $c$. On the other hand, $\mathcal{A}_4 \cap \mathcal{A}_3 \neq \emptyset$ whenever
\begin{equation}\label{eq:conditions5}
\frac{1+4a}{2(1-a)} < \frac12 \qquad \mbox{and} \qquad -\frac12  < - \frac{1+4c}{2(1-c)}.
\end{equation}
The condition $a, c <0$ immediately implies \eqref{eq:conditions5}, and hence $\mathcal{A}_4 \cap \mathcal{A}_3 \neq \emptyset$ without any further conditions on $a$ and $c$.

\medskip

Now we deal with the proof of $\mathcal{A}_2 \cap \mathcal{A}_3 \cap \mathcal{A}_4 \neq \emptyset$. Note that each set $\mathcal{A}_i$, $i=2,3,4$, describes an unbounded strip on $(\alpha,\beta)$-plane with ``the same slope''. Hence, it suffices to show that
\begin{equation}\label{eq:intersection}
\mathcal{A}_2 \cap \mathcal{A}_3 \neq \emptyset \; \wedge\;  \mathcal{A}_2 \cap \mathcal{A}_4 \neq \emptyset \; \wedge \; \mathcal{A}_3 \cap \mathcal{A}_4 \neq \emptyset.
\end{equation}
Indeed, if $\mathcal{A}_2 \cap \mathcal{A}_3 = \mathcal{A}_2$ (or $\mathcal{A}_3$), \eqref{eq:intersection} immediately implies $\mathcal{A}_2 \cap \mathcal{A}_3 \cap \mathcal{A}_4 \neq \emptyset$. Hence, suppose that $\mathcal{A}_2 \nsubseteq \mathcal{A}_3$ and $\mathcal{A}_3 \nsubseteq \mathcal{A}_2$. In this case $\mathcal{A}_2 \cap \mathcal{A}_3$ describes a new band constructed by upper and lower edges from $\mathcal{A}_2$ and $\mathcal{A}_3$, respectively, (or vice versa). Without loss of generality, we assume the new band is constructed by upper edge from $\mathcal{A}_2$ and lower edge from $\mathcal{A}_3$. On the other hand, $\mathcal{A}_2 \cap \mathcal{A}_4 \neq \emptyset$ and $\mathcal{A}_3 \cap \mathcal{A}_4 \neq \emptyset$ (and the open character of each involved set) require that the upper edge of $\mathcal{A}_4$ should be positioned above the lower edge of $\mathcal{A}_3$, and the lower edge of $\mathcal{A}_4$ should be positioned below the upper edge of $\mathcal{A}_2$, respectively. This exactly implies $(\mathcal{A}_2 \cap \mathcal{A}_3 )\cap \mathcal{A}_4 \neq \emptyset$. This is the only possible case, because all $\mathcal A_j$ describe bands with the same slope. Therefore, we only need to check \eqref{eq:intersection}. 

\medskip

Let us prove the remaining part of \eqref{eq:intersection}, namely $\mathcal{A}_2 \cap \mathcal{A}_3 \neq \emptyset$. This happens if 
\begin{equation}\label{eq:conditions6}
\frac{1+4a}{2(1-a)} < -\frac{3c}{2} \qquad \mbox{and} \qquad \frac{3a}{2}  < - \frac{1+4c}{2(1-c)}.
\end{equation}
A direct calculation under $a,c<0$ yields that \eqref{eq:conditions6} is equivalent to the conditions
\be\label{Final_condition}
3ac > 1 + 4a + 3c, \qquad  3ac > 1 + 3a + 4c.
\ee
On the $(a,c)$-plane, we can see that the region of $(a,c)$ satisfying \eqref{Final_condition} and $a,c<0$ covers the region of $(a,c)$ satisfying \eqref{eq:conditions1}. Indeed, consider the following two equations
\begin{equation}\label{eq:region1}
 8ac -2 -3(a+c)= 0 \quad \mbox{and} \quad 3ac - 1 - 4a - 3c = 0.
\end{equation}
Note that if $a < 0$, we know
\begin{equation}\label{eq:region2}
8a - 3 \neq 0 ~(< 0), \quad \mbox{and} \quad 3a-3 \neq0 ~(<0).
\end{equation}
For $a<0$, \eqref{eq:region1} can be expressed as implicit function formulas of $c$ in terms of $a$ as follows:
\[c=\Gamma_1(a) := \frac{3a+2}{8a-3} \quad \mbox{and} \quad c=\Gamma_2(a):=\frac{4a+1}{3a-3}.\]
It is known that if $\Gamma_1(a) < \Gamma_2(a)$, the region of $c < \Gamma_2(a)$ covers the region of $c < \Gamma_1(a)$ on $(a,c)$-plane, and hence the region of $(a,c)$ satisfying \eqref{eq:conditions1} is contained to the region of $(a,c)$ satisfying $3ac - 1 - 4a - 3c > 0$ for $a < 0$ due to \eqref{eq:region2}. A straightforward calculation indeed gives
\begin{equation}\label{eq:region}
\frac{3a+2}{8a-3} < \frac{4a+1}{3a-3}\; \Longleftrightarrow  \;23a^2 -a +3 >0,
\end{equation}
for all $a<0$. A similar argument holds for $3ac > 1 + 3a + 4c$.

\medskip

Thus, if $a < 0$ and $c < 0$ satisfy \eqref{eq:conditions1}, we can always choose $(\alpha,\beta) \in \mathcal{A}_2 \cap \mathcal{A}_3 \cap \mathcal{A}_4$ such that $A_k >0$ and $B_k>0$, $k=1,2,3,4$. The proof is complete.
\end{proof}

In what follows we study whether or not condition  \eqref{eq:conditions1} can be satisfied for different values of $b>\frac16$. 

\medskip

Going back to the original parameters $a$ and $b$ from \eqref{Conds0} and \eqref{Conds2} (recall that $a$ and $c$ were changed to $a/b$ and $c/b$ respectively), \eqref{eq:conditions1} can be rewritten as the fact that the set
\[
\mathcal{B}_4(b) := \left\{(a,c) \in \R^2  ~ : ~ 3b(a+c) +2b^2 -8ac < 0\right\},
\]
is nonempty. 

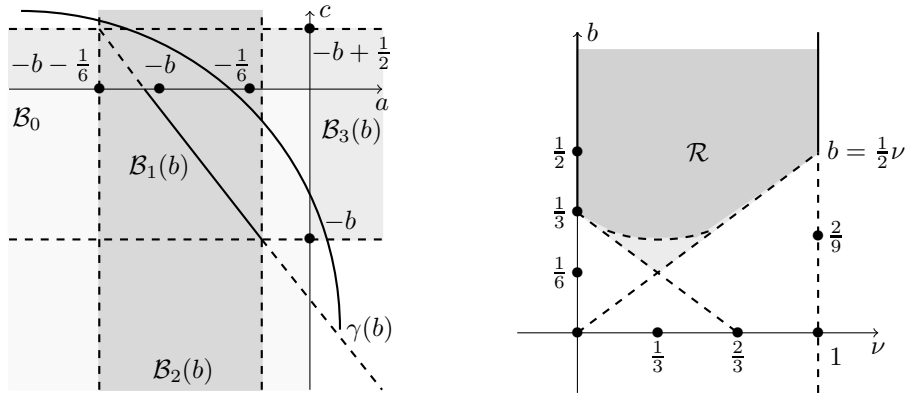
\begin{figure}[h!]
\begin{center}
\begin{tikzpicture}[scale=0.8]
\filldraw[thick, color=lightgray!30] (-1,1.5)--(5.2,1.5) -- (5.2,5) --(-1,5) -- (-1,1.5);
\filldraw[thick, color=lightgray!10] (-1,-1)--(-1,4) -- (4,4) --(4,-1) -- (-1,-1);
\filldraw[thick, color=lightgray!60] (0.5,-1)--(0.5,5.3) -- (3.2,5.3) --(3.2,-1) -- (0.5,-1);
\draw[thick, color=black] (1.25,4) -- (3.2,1.5);
\draw[thick,dashed] (0.5,5) -- (1.25,4);
\draw[thick,dashed] (3.2,1.5) -- (5.2,-1);
\draw[thick,dashed] (0.5,-1)--(0.5,5.3);
\draw[thick,dashed] (3.2,-1)--(3.2,5.3);
\draw[thick,dashed] (-1,1.5)--(5.2,1.5);
\draw[thick,dashed] (-1,5)--(5.2,5);
\draw[->] (-1,4) -- (5.2,4) node[below] {$a$};
\draw[->] (4,-1) -- (4,5.3) node[right] {$c$};
\node at (1.9,-0.7){$ \mathcal{B}_2(b)$};
\node at (3,4){$\bullet$};
\node at (2.7,4.4){$-\frac16$};
\node at (1.5,4){$\bullet$};
\node at (1.5,4.4){$-b$};
\node at (0.5,4){$\bullet$};
\node at (-0.3,4.4){$-b -\frac16$};
\node at (4,1.5){$\bullet$};
\node at (4.5,1.8){$-b$};
\node at (4,5){$\bullet$};
\node at (4.7,4.6){$-b+\frac12$};
\node at (4.7,3.3){$ \mathcal{B}_3(b)$};
\node at (1.5,2.7){$ \mathcal{B}_1(b)$};
\draw[thick] (4.5,0) arc (0:90:5.3);
\node at (5,0){$\gamma(b)$};
\node at (-0.7,3.5){$\mathcal{B}_0$};
\end{tikzpicture}
\qquad 
\qquad 
\begin{tikzpicture}[scale=0.8]
\filldraw[thick, color=lightgray!30] (0,4.7)--(0,2) -- (4/3,1) --(4,3) -- (4,4.7) -- (0,4.7);
\filldraw[thick, color=lightgray!70] (0,4.7)--(0,2) -- (0.6,1.7) --(1,1.6) --(4/3,1.6) --(1.97,1.6) --(4,3) -- (4,4.7) -- (0,4.7);
\draw[thick,dashed] (4,-1) -- (4,3);
\draw[thick] (4,3) -- (4,5);
\draw[thick,dashed] (0,0) -- (4,3);
\draw[thick,dashed] (0,2)--(8/3,0);
\draw[thick] (0,2) -- (0,5);
\draw[->] (-1,0) -- (5,0) node[below] {$\nu$};
\draw[->] (0,-1) -- (0,5) node[right] {$b$};
\node at (0,0){$\bullet$};
\node at (4,0){$\bullet$};
\node at (4.3,-0.4){$1$};
\node at (4/3,0){$\bullet$};
\node at (8/3,-0.4){$\frac23$};
\node at (4/3,-0.4){$\frac13$};
\node at (8/3,0){$\bullet$};
\node at (0,2){$\bullet$};
\node at (-0.3,2){$\frac 13$};
\node at (0,1){$\bullet$};
\node at (-0.3,1){$\frac16$};
\node at (0,3){$\bullet$};
\node at (4,1.6){$\bullet$};
\node at (4.3,1.6){$\frac29$};
\node at (-0.3,3){$\frac12$};
\node at (4.8,3){$b=\frac12\nu$};
\draw[thick,dashed] (0.5,1.7) arc (250:290:2.5);
\node at (2,3){$ \mathcal R$};
\end{tikzpicture}
\end{center}
\caption{(\emph{Left.}) Schematic representation of the action of the set $\mathcal{B}_4(b)$ in Fig. \ref{Fig:0}, in the case where $b>\frac29\sim 0.22 >\frac16\sim 0.17$. The region \emph{below} the continuous curve $\gamma(b)$ given by the equation $c= -b\left(\frac{2b+3a}{3b-8a} \right)$ represents the admissible $(a,c)$ given in  $\mathcal{B}_4(b)$. Note that $(a,c)$ cannot be arbitrarily small. (\emph{Right.}) The dark region $\mathcal R$ (subset of $\mathcal R_0$ in Fig. \ref{Fig:0}, right panel) represents all the pairs $(\nu,b)$ for which $\mathcal A_3$ is nonempty, and therefore Theorem \ref{Thm1} is valid. Note that $b=\frac29$ represents the bottom of this set, part of an hyperbola, see Appendix \ref{B} for more details. Probably Theorem \ref{Thm1} is still valid for a small portion of the remaining admissible region (light shadowed) below $b=\frac29$, but a more involved proof is needed. Finally note that at $(\nu,b)=(\frac13,\frac29)$, one has $(a,c)=(-\frac{1}{18},-\frac{1}{18}).$} \label{Fig:5}
\end{figure}

\medskip

Recall the sets $\mathcal{B}_0, \mathcal{B}_1(b), \mathcal{B}_2(b)$ and $ \mathcal{B}_3(b)$ defined in \eqref{Bj}, and Fig. \ref{Fig:0} and Lemma \ref{1o6}, which characterize the values of $b$ for which the $abcd$ system makes sense. In the following computations, we assume $b>\frac16$. 

\medskip

Let $\mathcal{P}_b := \mathcal{B}_0 \cap \mathcal{B}_1(b) \cap \mathcal{B}_2(b) \cap \mathcal{B}_3(b) \cap \mathcal{B}_4(b)$. As we seen in Subsection \ref{abc Conds},  $\mathcal{B}_0 \cap \mathcal{B}_1(b) \cap \mathcal{B}_2(b) \cap \mathcal{B}_3(b)$ represents a diagonal left on $(a,c)$-plane. Moreover, $\mathcal{B}_4(b)$ describes the region below the continuous curve $\gamma(b)$ given by the equation
\[
c= -b\left(\frac{2b+3a}{3b-8a} \right), \quad a\neq \frac38 b>0,
\]
see Fig. \ref{Fig:5} (recall that $a,c<0$ and that $c= -\frac 23b$ at $a=0$ is always above the minimal value $c=-b$ of $\mathcal{B}_1(b)$). Hence, a sufficient condition for having $\mathcal{P}_b \neq \emptyset$ is that the curve $\gamma(b)$  intersects the segment $ \mathcal{B}_1(b)$ at two different points, which is equivalent to the following equations having two roots for $(a,c)$
\begin{equation}\label{eq:solve}
\begin{cases}
a+c = \frac13 - 2b\\
c= -b\left(\frac{2b+3a}{3b-8a} \right).
\end{cases}
\end{equation} 
Solving \eqref{eq:solve} is reduced to $3456 b^2 - 1056b +64 >0$, which gives $b < \frac1{12} ~(<\frac16)$ or $b > \frac 29~ (>\frac16)$. Thus, we conclude that $\mathcal{P}_b \neq \emptyset$ if $b > \frac 29$. So far, we have proved 

\begin{lemma}[Sufficient condition for having dispersion-like parameters]\label{b29}
We have $\mathcal{P}_b =\mathcal{B}_0 \cap \mathcal{B}_1(b) \cap \mathcal{B}_2(b) \cap \mathcal{B}_3(b) \cap \mathcal{B}_4(b) \neq \emptyset$ provided $b>\frac29.$ Moreover, this set represents a segment of line in the $(a,c)$ variables.
\end{lemma}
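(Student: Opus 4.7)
The plan is to reduce the nonemptiness of $\mathcal{P}_b$ to the existence of two distinct intersection points between the line $\mathcal{B}_1(b)$ and the hyperbola $\gamma(b)$, since by Lemma \ref{1o6} we already know that $\mathcal{B}_0 \cap \mathcal{B}_1(b) \cap \mathcal{B}_2(b) \cap \mathcal{B}_3(b)$ is a nondegenerate segment of the line $\mathcal{B}_1(b)$ whenever $b>\tfrac16$, and a portion of $\mathcal{B}_4(b)$ sits above that line at $a=0$ (where $c=-\tfrac23 b > -b$).

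First, I would rewrite the set $\mathcal{B}_4(b)$ in a more convenient form. Since $3b-8a>0$ for $a<0$ and $b>0$, the inequality $3b(a+c) + 2b^2 - 8ac <0$ that defines $\mathcal{B}_4(b)$ is equivalent to
\[
c < -b\left(\frac{2b+3a}{3b-8a}\right) =: \gamma(b)(a),
\]
so $\mathcal{B}_4(b)$ is exactly the region lying below the graph of $\gamma(b)$. Next, I would substitute the linear constraint from $\mathcal{B}_1(b)$, namely $c = \tfrac13 - 2b - a$, into the equation $c=\gamma(b)(a)$. After clearing denominators this yields the quadratic equation in $a$
\[
8a^2 - \Bigl(\tfrac{16}{3}b + \tfrac{8}{3}\Bigr)b \cdot (\text{coefficient adjustments}) + \cdots = 0,
\]
whose discriminant, upon simplification, reduces to $3456 b^{2} - 1056 b + 64$. (This computation is routine: one multiplies through by $3b-8a$ and collects powers of $a$.) Thus the curve $\gamma(b)$ meets the line $\mathcal{B}_1(b)$ at two distinct points if and only if this discriminant is strictly positive, i.e.\ $b<\tfrac1{12}$ or $b>\tfrac29$. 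Since we are already imposing $b>\tfrac16 > \tfrac1{12}$, the admissible regime is exactly $b>\tfrac29$.

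Once two distinct intersection points exist, I would argue that the open sub-arc of $\mathcal{B}_1(b)$ lying strictly below $\gamma(b)$ between these two crossings is nonempty. To do this I use the anchor check at $a=0$: on the line $\mathcal{B}_1(b)$, the corresponding $c$-value is $\tfrac13-2b$, while $\gamma(b)(0)=-\tfrac23 b$. A short computation shows $\tfrac13-2b < -\tfrac23 b$ is equivalent to $b > \tfrac14$, which does not hold in general; so rather than using $a=0$ I would pick the midpoint of the segment $\mathcal{B}_2(b)\cap\mathcal{B}_3(b)\cap \mathcal{B}_1(b)$ (whose endpoints were identified in \eqref{Bj} and Fig.\ \ref{Fig:0}) and verify directly that a point of $\mathcal{B}_1(b)$ with $a$ close to $-b$ satisfies $c<\gamma(b)(a)$ when $b>\tfrac29$. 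Combined with the two-crossing fact, this forces the open segment between the crossings to be a nontrivial sub-interval of the admissible segment from Lemma \ref{1o6}, i.e.\ contained in $\mathcal{B}_0\cap\mathcal{B}_2(b)\cap\mathcal{B}_3(b)$.

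The main obstacle I expect is purely algebraic bookkeeping: computing the discriminant $3456b^2-1056b+64$ correctly after substituting the linear relation into the rational equation for $\gamma(b)$, and then certifying that the two intersection points actually fall inside the admissible window $\mathcal{B}_2(b)\cap\mathcal{B}_3(b)$ rather than outside of it (which would make the intersection with the admissible segment empty despite two crossings existing in the full plane). This last check is straightforward because $\gamma(b)$ is continuous on $a<\tfrac{3}{8}b$ and the admissible segment is small and symmetric around $a=-b+\tfrac1{12}$; once both crossings lie in the strip $-b-\tfrac16\le a\le -b+\tfrac13$ for $b>\tfrac29$, the conclusion that $\mathcal{P}_b$ is a nontrivial (open) line segment follows, proving both claims of the lemma.
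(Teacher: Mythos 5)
Your overall route is the same as the paper's: describe $\mathcal{B}_4(b)$ as the region below the curve $c=-b(2b+3a)/(3b-8a)$, intersect with the line $a+c=\frac13-2b$, and reduce everything to the sign of the discriminant $3456b^2-1056b+64$, whose roots are $b=\frac1{12}$ and $b=\frac29$. That computation is correct and matches the paper. You also correctly flag the point the paper's main text glosses over (and only settles in Appendix \ref{B}, in the $(\nu,b)$ parametrization): two crossings of the \emph{full line} with the curve do not by themselves guarantee that the below-curve sub-arc meets the admissible segment $\mathcal{B}_0\cap\mathcal{B}_1(b)\cap\mathcal{B}_2(b)\cap\mathcal{B}_3(b)$, so an anchor point inside that segment is needed.

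The gap is that both anchors you propose fail exactly in the delicate range $\frac29<b\le\frac14$. You rightly discard $a=0$ (it works only for $b>\frac14$), but your replacement --- the midpoint of $\mathcal{B}_1(b)\cap\mathcal{B}_2(b)\cap\mathcal{B}_3(b)$, i.e.\ $(a,c)=(-b+\frac1{12},-b+\frac14)$, or more generally points of the line with $a$ near $-b$ --- is no better there: on $\mathcal{B}_1(b)$ the point $a=-b$ has $c=\frac13-b>0$ for $b<\frac13$, and the midpoint has $c=-b+\frac14\ge0$ for $b\le\frac14$, so neither lies in $\mathcal{B}_0$; moreover at that midpoint $3b(a+c)+2b^2-8ac=-12b^2+\frac{11}{3}b-\frac16$, which for $b>\frac16$ is negative only when $b>\frac14$. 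The set you should symmetrize about is not $\mathcal{B}_1\cap\mathcal{B}_2\cap\mathcal{B}_3$ but its intersection with $\mathcal{B}_0$, whose center is the symmetric point $(a,c)=\bigl(\frac16-b,\frac16-b\bigr)$ (i.e.\ $\nu=\frac13$). That point belongs to $\mathcal{B}_0\cap\mathcal{B}_1(b)\cap\mathcal{B}_2(b)\cap\mathcal{B}_3(b)$ for every $b>\frac16$, and there
\[
3b(a+c)+2b^2-8ac=-12b^2+\tfrac{11}{3}b-\tfrac29=-\tfrac19\bigl(108b^2-33b+2\bigr),
\]
which is negative if and only if $b<\frac1{12}$ or $b>\frac29$. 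This single evaluation closes the gap --- and in fact proves the lemma on its own: since $\mathcal{B}_0$ and $\mathcal{B}_4(b)$ are open, an open sub-segment of $\mathcal{B}_1(b)$ around this point lies in $\mathcal{P}_b$, which is precisely the ``segment of line'' claim, and the discriminant computation becomes optional.
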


This result proves the second item in Lemma \ref{Sufficient}.

\medskip

In Appendix \ref{B} we will prove that $\mathcal{P}_b$ follows a precise parametric characterization. Finally, let us define 
\begin{equation}\label{eq:parameter set}
\mathcal{P} :=\mathcal{P}((a,b,c)) = \bigcup_{b > \frac29} \mathcal{P}_b, \qquad \mathcal R:= \left\{  (\nu,b) \in \mathcal{R}_0 ~ :~  (a,b,c) \in \mathcal P_b \neq \emptyset \right\}.
\end{equation}
This set $\mathcal{P}$ describes the parameter condition of $(a,b,c)$, which allows both well-defined model in physical sense and dispersive phenomena in mathematical sense. 

\medskip

So far, we have proved the following result:

\begin{lemma}[Positivity of the quadratic form $\mathcal Q(t)$]\label{Positivity_final}
Let $a,c<0$ satisfying \eqref{Conds}. Under the dispersion-like condition \eqref{eq:conditions1} on $(a,b,c)$, there exist parameters $(\al,\bt) \in \R^2$ (not necessarily different from (0,0)), such that $\mathcal H(t) = \mathcal I(t) + \al \mathcal J(t)+\bt \mathcal K(t)$ obeys the following decomposition for its time derivative:
\be\label{Positivity_final_1}
\frac{d}{dt}\mathcal H(t) = \mathcal Q(t) + \mathcal{SQ}(t) + \mathcal{NQ}(t),
\ee
where
\begin{enumerate}
\item $\mathcal Q(t)$ is given by the expression
\be\label{eq:leading-1a}
\begin{aligned}
\mathcal Q(t) =& \int \vp' \Big( A_1 f^2 + A_2 f_x^2 + A_3 f_{xx}^2 + A_4 f_{xxx}^2\Big)\\
&+\int \vp' \Big( B_1 g^2 + B_2 g_x^2 + B_3 g_{xx}^2 + B_4 g_{xxx}^2\Big)\\
&+\int \vp''' \Big(D_{11}f^2 + D_{12}f_x^2 + D_{21}g^2 + D_{22}g_x^2\Big),
\end{aligned}
\ee
for $f= (1-\px^2)^{-1} u$ and $g= (1-\px^2)^{-1} \eta$.
\smallskip
\item {\bf Positivity.} We have $A_i,B_j>0$, and $D_{ij}\in \R$.
\smallskip
\item $\mathcal{SQ}(t)$, $\mathcal{NQ}(t)$ are given in \eqref{eq:small linear}-\eqref{eq:nonlinear}.
\end{enumerate}
\end{lemma}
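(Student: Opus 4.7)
The plan is to assemble this statement directly from the ingredients already prepared in Sections \ref{VIRIAL} and \ref{sec:general virial}, so that the work reduces to a careful bookkeeping argument together with one nontrivial choice of parameters $(\alpha,\beta)$. Concretely, I would first invoke Proposition \ref{prop:general virial}, which, for \emph{any} real $\alpha,\beta$, already provides the decomposition
\[
\frac{d}{dt}\mathcal H_{\alpha,\beta}(t) = \mathcal Q(t) + \mathcal{SQ}(t) + \mathcal{NQ}(t),
\]
with $\mathcal{SQ}(t)$ and $\mathcal{NQ}(t)$ of the prescribed forms \eqref{eq:small linear} and \eqref{eq:nonlinear}. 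Thus the decomposition \eqref{Positivity_final_1} and item (3) are essentially free, independently of the value of $(\alpha,\beta)$.

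Next, to rewrite $\mathcal Q(t)$ in the canonical-variables form \eqref{eq:leading-1a}, I would invoke Lemma \ref{lem:leading}. Its proof is just a substitution of the algebraic identities \eqref{eq:L2}, \eqref{eq:H1}, \eqref{eq:nonlocal} into the expression \eqref{eq:leading}, collecting powers of $f, f_x, f_{xx}, f_{xxx}$ (and similarly for $g$), together with the residual $\varphi'''$ terms. The coefficients $A_i, B_j, D_{ij}$ come out of this substitution as the explicit polynomial expressions \eqref{eq:A1B1}--\eqref{eq:D1D2} in $\alpha,\beta,a,c$. Since the $D_{ij}$ are polynomials in real parameters, $D_{ij}\in\R$ is automatic, which closes part of item (2).

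The heart of the matter is the existence of $(\alpha,\beta)\in\R^2$ yielding simultaneously $A_i>0$ and $B_j>0$. Here $A_1=B_1=\tfrac12$ is automatic from \eqref{eq:A1B1}, so it remains to produce $(\alpha,\beta)$ such that $A_k,B_k>0$ for $k=2,3,4$. This is precisely the content of Lemma \ref{lem:conditions}: each pair of inequalities carves out an infinite strip $\mathcal A_k$ of $(\alpha,\beta)$-plane with the same slope, and the dispersion-like condition \eqref{eq:conditions1} has been shown to be exactly equivalent to the nonemptiness of $\mathcal A_3$. The remaining task is the geometric argument that $\mathcal A_2\cap\mathcal A_3\cap\mathcal A_4\neq\emptyset$, which has already been carried out in the proof of Lemma \ref{lem:conditions} by pairwise checking $\mathcal A_i\cap\mathcal A_j\neq\emptyset$ and then using that all three sets are parallel strips.

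Accordingly, the main obstacle in the proof is not algebraic but really combinatorial-geometric: one must ensure that the three strips in $(\alpha,\beta)$-plane produced by $A_2,B_2>0$, $A_3,B_3>0$, and $A_4,B_4>0$ have a common intersection under the single hypothesis \eqref{eq:conditions1}. This is exactly what Lemma \ref{lem:conditions} provides, and once applied it selects admissible $(\alpha,\beta)$; together with the two previous steps this yields all three conclusions of the statement. No further computation is required.
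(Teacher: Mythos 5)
Your proposal is correct and follows essentially the same route as the paper: the lemma is presented there as a summary of what has already been established (``So far, we have proved the following result''), namely the decomposition from Proposition \ref{prop:general virial}, the canonical-variable rewriting of $\mathcal Q(t)$ from Lemma \ref{lem:leading}, and the choice of $(\al,\bt)\in\mathcal{A}_2\cap\mathcal{A}_3\cap\mathcal{A}_4$ guaranteed by Lemma \ref{lem:conditions} under the dispersion-like condition \eqref{eq:conditions1}. Your identification of the intersection of the three parallel strips as the only genuinely nontrivial step matches the paper's argument exactly.
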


\bigskip

\section{Integrability in time}\label{5}

\medskip
The purpose of this Section is to obtain an integrability-in-time estimate for  the terms appearing in Lemma \ref{Positivity_final}. This amounts to estimate the various terms in \eqref{Positivity_final_1} using the stability of the zero solution.

\begin{proposition}[Decay in compact intervals]\label{prop:virial}
Let $(a,b,c)$ be \emph{dispersion-like} parameters defined as in Definition \ref{Dis_Par}. Let $(u,\eta)(t)$ be $H^1 \times H^1$ global solutions to \eqref{boussinesq} such that \eqref{Smallness} holds. Then, we have for large (but fixed) $\lambda \gg 1$ that
\begin{equation}\label{eq:virial1}
\int_2^{\infty} \int \sech^2 \left(\frac{x}{\lambda}\right) \left(u^2 + (\px u)^2 + \eta^2 + (\px \eta)^2 \right)(t,x) \, dx\,dt \lesssim \lambda\ve^2.
\end{equation}
As an immediate consequence, there exists an increasing sequence of time $\{t_n\}$ $(t_n \to \infty$ as $n \to \infty)$ such that
\begin{equation}\label{eq:virial2}
\int \sech^2 \left(\frac{x}{\lambda}\right) \left(u^2 + (\px u)^2 + \eta^2 + (\px \eta)^2 \right)(t_n,x) \; dx \longrightarrow 0 \mbox{ as } n \to \infty. 
\end{equation}
\end{proposition}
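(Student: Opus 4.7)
The plan is to take the weight $\varphi$ in the definitions \eqref{I}--\eqref{K} to be
\[
\varphi(x) := \lambda \tanh\!\left(\tfrac{x}{\lambda}\right), \qquad \text{so that } \quad \varphi'(x) = \sech^2\!\left(\tfrac{x}{\lambda}\right),
\]
with $\lambda \gg 1$ large and fixed. This weight satisfies $|\varphi| \le \lambda$, $|\varphi'|\le 1$, $|\varphi''|\lesssim \lambda^{-1}\varphi'$ and $|\varphi'''|\lesssim \lambda^{-2}\varphi'$; in particular the hypotheses $|\varphi^{(n)}|\lesssim \varphi'$ needed by Lemmas \ref{lem:L2 comparable}, \ref{lem:H1 comparable}, \ref{lem:nonlinear1}--\ref{lem:nonlinear3} are met. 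With the parameters $(\alpha,\beta)$ supplied by Lemma \ref{Positivity_final}, I work with the combined virial $\mathcal H(t) = \mathcal I(t) + \alpha \mathcal J(t) + \beta \mathcal K(t)$ and exploit the decomposition \eqref{Positivity_final_1}.

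First I would control $\mathcal H(t)$ uniformly in time. By Cauchy--Schwarz together with the bound on $\varphi$ and $\varphi'$, and using \eqref{Smallness} and conservation of the energy \eqref{Energy} (which keeps $\sup_t \|(u,\eta)\|_{H^1\times H^1}\lesssim \ve$),
\[
|\mathcal I(t)| \lesssim \lambda \|(u,\eta)\|_{H^1\times H^1}^2 \lesssim \lambda \ve^2, \qquad |\mathcal J(t)|+|\mathcal K(t)|\lesssim \ve^2,
\]
so $|\mathcal H(t)|\lesssim \lambda \ve^2$ uniformly in $t$.

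Next I would lower bound $\mathcal Q(t)$ and absorb the remaining terms. Passing to the canonical variables $f = \nlop u$, $g=\nlop \eta$ of Lemma \ref{Positivity_final} and using the positivity $A_i,B_j>0$ provided by the dispersion-like condition, the two equivalences \eqref{eq:L2_est}--\eqref{eq:H1_est} (applied to each of $\int \varphi' A_i f^{(i-1)\,2}$ etc., taking $\lambda$ large enough so that the $\int \varphi''' f^{\,2}$ and $\int \varphi''' g^{\,2}$ remainders are absorbed) yield
\[
\mathcal Q(t) \;\ge\; C_0 \int \varphi'\, \bigl(u^2 + u_x^2 + \eta^2 + \eta_x^2\bigr)(t,x)\, dx,
\]
for a constant $C_0 > 0$ depending only on $(a,b,c)$. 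For $\mathcal{SQ}(t)$, every term carries a $\varphi''$ or $\varphi'''$ weight, so by the pointwise estimates on derivatives of $\varphi$ together with Lemma \ref{lem:L2 comparable}, $|\mathcal{SQ}(t)| \lesssim \lambda^{-1} \int \varphi'(u^2+u_x^2+\eta^2+\eta_x^2)$, which is $\ll \mathcal Q(t)$ for $\lambda$ large. For $\mathcal{NQ}(t)$, all five terms are cubic in $(u,\eta)$; bounding the pointwise nonlocal pieces by Lemmas \ref{lem:nonlinear1}--\ref{lem:nonlinear3} (which require precisely the bound $\nlop \varphi' \lesssim \varphi'$, straightforward for $\sech^2$), and invoking the smallness $\|(u,\eta)\|_{H^1\times H^1}\lesssim \ve$, gives $|\mathcal{NQ}(t)| \lesssim \ve \int \varphi'(u^2+u_x^2+\eta^2+\eta_x^2)$. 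Thus, for $\lambda$ large enough and $\ve$ small enough,
\[
\frac{d}{dt}\mathcal H(t) \;\ge\; \frac{C_0}{2}\int \varphi'\, \bigl(u^2 + u_x^2 + \eta^2 + \eta_x^2\bigr)(t,x)\, dx.
\]

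Integrating in time from $2$ to $T$ and using the uniform bound $|\mathcal H(t)|\lesssim \lambda \ve^2$,
\[
\frac{C_0}{2}\int_2^T\!\!\int \sech^2\!\left(\tfrac{x}{\lambda}\right)\bigl(u^2+u_x^2+\eta^2+\eta_x^2\bigr)\,dx\,dt \;\le\; \mathcal H(T)-\mathcal H(2) \;\lesssim\; \lambda \ve^2,
\]
and letting $T\to\infty$ produces \eqref{eq:virial1}. The statement \eqref{eq:virial2} is then immediate: a nonnegative function of $t$ whose integral on $[2,\infty)$ is finite must vanish along some sequence $t_n\to\infty$. The main obstacle in this plan is the choice of the dispersion-like condition so that the matrix coefficients $A_i,B_j$ are \emph{all} strictly positive simultaneously (already done in Lemma \ref{Positivity_final}); once that is in hand, the rest is a careful but routine bookkeeping of powers of $\lambda^{-1}$ and $\ve$ to ensure $\mathcal{SQ}+\mathcal{NQ}$ is a genuine perturbation of $\mathcal Q$.
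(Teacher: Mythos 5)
Your proposal is correct and follows essentially the same route as the paper: the weight $\varphi(x)=\lambda\tanh(x/\lambda)$, the positivity of $\mathcal Q(t)$ from Lemma \ref{Positivity_final} combined with Lemmas \ref{lem:L2 comparable}--\ref{lem:H1 comparable}, absorption of $\mathcal{SQ}$ and $\mathcal{NQ}$ via the $\lambda^{-1}$ gain on $\varphi'',\varphi'''$ and the $\ve$-smallness through Lemma \ref{lem:nonlinear1}, and then integration in time against the uniform bound $|\mathcal H(t)|\lesssim\lambda\ve^2$. You even make explicit the uniform bound on $\mathcal H$ that the paper leaves implicit, which is where the factor $\lambda$ on the right-hand side of \eqref{eq:virial1} comes from.
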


\begin{remark}
Note that estimate \eqref{eq:virial1} completes the proof of \eqref{eq:virial1_intro}.
\end{remark}

\begin{proof}[Proof of Proposition \ref{prop:virial}]
Let us choose $\vp(x) = \lambda\tanh\big(\frac{x}{\lambda}\big)$ in Lemma \ref{Positivity_final}, for $\lambda >1$ to be chosen later. Note that
\begin{equation}\label{eq:sech}
\begin{aligned}
\vp' (x)= &~ {} \sech^2\Big(\frac{x}{\lambda} \Big), \quad |\vp'' (x)|\leq  \frac{2}{\la} \sech^2\Big(\frac{x}{\lambda} \Big) = \frac2{\la} \vp'(x) ,\\
 \mbox{and} & \quad |\vp'''(x)| \le \frac{1}{\lambda^2} \sech^2\Big(\frac{x}{\lambda} \Big) = \frac1{\la^2} \vp'(x).
\end{aligned}
\end{equation}
From Lemma \ref{Positivity_final}, choosing a sufficiently large $\lambda > 1$, the rest terms in \eqref{eq:leading-1a}, characterized by the weight $\vp'''$, cannot change the sign of $A_k$ and $B_k$, $k=1,2,3,4$, due to \eqref{eq:sech}. Therefore, they can be easily absorbed in \eqref{eq:leading-1a}:
\[
\begin{aligned}
\mathcal Q(t) \geq & ~{} \frac34 \int \vp' \Big( A_1 f^2 + A_2 f_x^2 + A_3 f_{xx}^2 + A_4 f_{xxx}^2\Big)\\
& ~{}+ \frac34 \int \vp' \Big( B_1 g^2 + B_2 g_x^2 + B_3 g_{xx}^2 + B_4 g_{xxx}^2\Big).
\end{aligned}
\]
Similarly,  $\mathcal{SQ}(t)$ in  \eqref{eq:small linear} can be characterized in terms of canonical variables using e.g. \eqref{eq:small linear1} and \eqref{eq:small linear2}. Once again, because of the weight terms $\vp''$ and $\vp'''$ and \eqref{eq:sech}, these terms are negligible with respect to the leading terms in \eqref{eq:leading-1a}. We conclude from \eqref{Positivity_final_1},
\[
\begin{aligned}
\frac{d}{dt}\mathcal H(t) \geq & ~{} \frac12 \int \vp' \Big( A_1 f^2 + A_2 f_x^2 + A_3 f_{xx}^2 + A_4 f_{xxx}^2\Big)\\
& ~{}+ \frac12 \int \vp' \Big( B_1 g^2 + B_2 g_x^2 + B_3 g_{xx}^2 + B_4 g_{xxx}^2\Big) + \mathcal{NQ}(t).
\end{aligned}
\]

Hence, Lemmas \ref{lem:L2 comparable} and \ref{lem:H1 comparable} give 
\[\begin{aligned}
\frac{d}{dt} \mathcal H(t) \ge&~{} C_1\int \sech^2\Big( \frac{x}{\lambda}\Big) u^2 + C_2\int \sech^2\Big( \frac{x}{\lambda}\Big) (\px u)^2 \\
&+C_3\int \sech^2\Big( \frac{x}{\lambda}\Big) \eta^2 + C_4\int \sech^2\Big( \frac{x}{\lambda}\Big) (\px \eta)^2\\
&+ \mathcal{NQ}(t),
\end{aligned}\]
for large $\lambda \gg 1$ and some $C_j >0$, $j=1,2,3,4$.

\medskip

On the other hand, Lemma \ref{lem:nonlinear1} easily gives
\[
\mathcal{NQ}(t) \lesssim (\norm{u}_{H^1} + \norm{\eta}_{H^1}) \int \vp' (u^2 + (\px u)^2 + \eta^2 + (\px \eta)^2).
\]
This estimate, together with the smallness condition of $u$ and $\eta$, it implies 
\begin{equation}\label{eq:virial1-1}
\begin{aligned}
\frac{d}{dt} \mathcal H(t)  \ge&~{} \wt{C}_1\int \sech^2\Big( \frac{x}{\lambda}\Big) u^2 + \wt{C}_2\int \sech^2\Big( \frac{x}{\lambda}\Big) (\px u)^2 \\
&+\wt{C}_3\int \sech^2\Big( \frac{x}{\lambda}\Big) \eta^2 + \wt{C}_4\int \sech^2\Big( \frac{x}{\lambda}\Big) (\px \eta)^2,
\end{aligned}
\end{equation}
for some $\wt{C}_j >0$, $j=1,2,3,4$. Integrating the both sides of \eqref{eq:virial1-1} in terms of $t$ on $[2,\infty)$ with $\norm{u}_{H^1} + \norm{\eta}_{H^1} \lesssim \ve$ exactly proves \eqref{eq:virial1}:
\[\int_2^{\infty} \int \sech^2 \left(\frac{x}{\lambda}\right) \left(u^2 + (\px u)^2 + \eta^2 + (\px \eta)^2\right)(t,x) \; dx dt \lesssim \lambda\ve^2.\]
The standard argument guarantees that there exists a sequence of time $t_n \to \infty$ as $n \to \infty$ (which can be chosen after passing to a subsequence) such that \eqref{eq:virial2} holds true:
\[
\lim_{n \to \infty} \int \sech^2 \left(\frac{x}{\lambda}\right) \left(u^2 + (\px u)^2 + \eta^2 + (\px \eta)^2\right)(t_n,x) \; dx =0.
\]
The proof is complete.
\end{proof}

\subsection{Time dependent weights} 

With small modifications of the proofs established in Section \ref{VIRIAL}, a stronger version of Proposition \ref{prop:virial} can be obtained. Let $\la=\lambda(t)$ be the time-dependent weight given by
\begin{equation}\label{eq:lambda}
\lambda(t) := \frac{C_0t}{\log^2t}, \quad t\geq 2,
\end{equation}
for any (fixed) constant $C_0>0$; precisely corresponding to the upper limit of the space interval $I(t)$ \eqref{I(t)}. The following result is stated for times $t\geq 2$, but it can be easily stated and proved for corresponding negative times.

\begin{proposition}[Decay in time-dependent intervals]\label{prop:virial2}
Let $(a,b,c)$ be \emph{dispersion-like} parameters defined as in Definition \ref{Dis_Par}. Let $(u,\eta)(t)$ be $H^1 \times H^1$ global solutions to \eqref{boussinesq} such that \eqref{Smallness} holds. Then, we have 
\begin{equation}\label{eq:virial2-1}
\int_2^{\infty}\frac{1}{\lambda(t)}\int \sech^2 \left(\frac{x}{\lambda(t)}\right) \left(u^2 + (\px u)^2 + \eta^2 + (\px \eta)^2\right)(t,x) \, dx \,dt\lesssim \ve^2.
\end{equation}
As an immediate consequence, there exists an increasing sequence of time $\{t_n\}$ $(t_n \to \infty$ as $n \to \infty)$ such that
\begin{equation}\label{eq:virial2-2}
\int \sech^2 \left(\frac{x}{\lambda(t_n)}\right) \left(u^2 + (\px u)^2 + \eta^2 + (\px \eta)^2\right)(t_n,x) \; dx \longrightarrow 0 \mbox{ as } n \to \infty. 
\end{equation}
\end{proposition}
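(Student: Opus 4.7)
The plan is to adapt the argument of Proposition~\ref{prop:virial} to the case where the weight depends on time through $\lambda=\lambda(t)$ given in~\eqref{eq:lambda}. Specifically, I would choose the bounded weight
\[
\varphi(t,x) := \tanh\!\left(\frac{x}{\lambda(t)}\right), \qquad \varphi'(t,x) = \frac{1}{\lambda(t)}\sech^2\!\left(\frac{x}{\lambda(t)}\right),
\]
so that $\varphi'$ coincides exactly with the integrand of~\eqref{eq:virial2-1}. Define the modified virial $\mathcal H(t):=\mathcal I(t)+\alpha\mathcal J(t)+\beta\mathcal K(t)$ with this new weight, where $(\alpha,\beta)$ are chosen as in Lemma~\ref{Positivity_final} (and depend only on the dispersion-like parameters $(a,c)$, not on $t$). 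Because $|\varphi|\leq 1$ and $|\varphi'|\leq 1/\lambda(t)\leq 1$ for $t\geq 2$, the functional is uniformly bounded: $|\mathcal H(t)|\lesssim \ve^2$ independently of $t$. This uniform boundedness is the key structural gain compared with Proposition~\ref{prop:virial}, whose virial functional grew like $\lambda$.

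Next I would differentiate in time. Because $\varphi$ now depends on $t$, the chain of computations of Lemmas~\ref{Virial_bous},~\ref{lem:J},~\ref{lem:K} reproduces the right-hand side of Proposition~\ref{prop:general virial}, \emph{plus} additional error terms arising from the explicit action of $\partial_t$ on $\varphi$, $\varphi'$, $\varphi''$. The intrinsic part $\mathcal Q+\mathcal{SQ}+\mathcal{NQ}$ still produces, after passing to canonical variables and invoking the positivity furnished by Lemma~\ref{Positivity_final} together with the absorption of cubic terms via the smallness of $\ve$ (exactly as in~\eqref{eq:virial1-1}), the lower bound
\[
\mathcal Q(t)+\mathcal{SQ}(t)+\mathcal{NQ}(t) \geq \frac{\tilde c}{\lambda(t)}\int \sech^2\!\left(\frac{x}{\lambda(t)}\right)\bigl(u^2+u_x^2+\eta^2+\eta_x^2\bigr)\,dx.
\]

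The hard part, and the only genuinely new ingredient, is controlling the error $\mathcal E(t)$ generated by $\partial_t\varphi$, $\partial_t\varphi'$, $\partial_t\varphi''$. A direct computation yields
\[
\partial_t \varphi(t,x) = -\frac{x\,\lambda'(t)}{\lambda(t)^2}\sech^2\!\left(\frac{x}{\lambda(t)}\right),
\]
and since $|y|\sech^2(y)\leq \sech(y)$ for all $y\in\R$, one finds $|\partial_t\varphi|\leq \frac{\lambda'(t)}{\lambda(t)}\sech(x/\lambda(t))$, with analogous pointwise bounds for $\partial_t\varphi^{(n)}$. The resulting error is controlled by
\[
|\mathcal E(t)| \lesssim \frac{\lambda'(t)}{\lambda(t)} \int \sech\!\left(\frac{x}{\lambda(t)}\right)\bigl(|u\eta|+|u_x\eta_x|\bigr)\,dx,
\]
which, after Cauchy--Schwarz, the smallness $\|(u,\eta)\|_{H^1\times H^1}\lesssim\ve$, and Young's inequality with parameter $\delta=\tilde c/(2\lambda(t))$, is dominated by
\[
\frac{\tilde c}{2\lambda(t)}\int \sech^2\!\left(\frac{x}{\lambda(t)}\right)\bigl(u^2+u_x^2+\eta^2+\eta_x^2\bigr)dx + C\ve^2\frac{(\lambda'(t))^2}{\lambda(t)}.
\]
The first piece is absorbed into the coercive lower bound; the second is integrable, since $\lambda'(t)\sim C_0/\log^2 t$ and $\lambda(t)\sim C_0 t/\log^2 t$ give $(\lambda'(t))^2/\lambda(t)\sim 1/(t\log^2 t)$, whose integral over $[2,\infty)$ is finite. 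This is precisely where the $\log^2 t$ correction in the definition of $\lambda(t)$ is used: had we taken $\lambda(t)=C_0 t/\log t$ or $\lambda(t)=C_0 t$, the corresponding error integral would diverge.

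Integrating $\frac{d}{dt}\mathcal H(t)$ on $[2,T]$, using $|\mathcal H(T)-\mathcal H(2)|\lesssim\ve^2$, and letting $T\to\infty$ yields~\eqref{eq:virial2-1}. For~\eqref{eq:virial2-2}, set $g(t):=\int\sech^2(x/\lambda(t))(u^2+u_x^2+\eta^2+\eta_x^2)\,dx$; since $\int_2^\infty g(t)/\lambda(t)\,dt<\infty$ while $\int_2^\infty dt/\lambda(t)=\int_2^\infty \log^2 t/(C_0\, t)\,dt=+\infty$, one cannot have $\liminf_{t\to\infty}g(t)>0$, hence a subsequence $t_n\to\infty$ with $g(t_n)\to 0$ exists.
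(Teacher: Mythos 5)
Your proposal is correct and follows essentially the same route as the paper's proof: the same time-dependent weight $\vp(t,x)=\tanh(x/\lambda(t))$, the same identification of the extra terms produced by $\partial_t$ hitting the weight, and the same absorption of those terms (via Cauchy--Schwarz and Young) into the coercive part plus an integrable remainder of size $\ve^2/(t\log^2 t)$, followed by the non-integrability of $1/\lambda(t)$ to extract the sequence $t_n$. The only difference is cosmetic: you spell out the pointwise bounds $|y|\sech^2(y)\le\sech(y)$ and the Young step that the paper delegates to \cite[Lemma 3.1]{MPP}, and you should also list the mixed products $\eta\,\px u$ and $u\,\px\eta$ coming from $\mathcal J$ and $\mathcal K$ among the error integrands, though they are handled identically.
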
 

\begin{proof}
We choose, in functionals $\mathcal{I}, \mathcal{J}$ and $\mathcal{K}$ (see \eqref{I}-\eqref{K}),  the time-dependent weight
\begin{equation}\label{eq:vp}
\vp (t,x):= \tanh \left(\frac{x}{\lambda(t)}\right) \quad \mbox{and} \quad \vp'(t,x) := \frac{1}{\lambda(t)} \sech^2\left(\frac{x}{\lambda(t)}\right),
\end{equation}
with $\la(t)$ given by \eqref{eq:lambda}. Note that
\begin{equation}\label{eq:lambda1}
\frac{\lambda'(t)}{\lambda(t)} = \frac{1}{t}\left(1-\frac{2}{\log t} \right).
\end{equation}
Then, we compute \eqref{eq:gvirial}, but in addition we will obtain the following three new terms:
\begin{equation}\label{eq:gvirial-I}
-\frac{\lambda'(t)}{\lambda(t)} \int \frac{x}{\lambda(t)}\sech^2\left( \frac{x}{\lambda(t)}\right) \left(u\eta + \px u \px \eta\right),
\end{equation}
\begin{equation}\label{eq:gvirial-J}
-\alpha\frac{\lambda'(t)}{\lambda(t)} \int\left(1- \frac{2x}{\lambda(t)}\tanh\left( \frac{x}{\lambda(t)}\right)\right)\frac{1}{\lambda(t)}\sech^2\left( \frac{x}{\lambda(t)} \right) \eta \px u,
\end{equation}
and
\begin{equation}\label{eq:gvirial-K}
-\beta\frac{\lambda'(t)}{\lambda(t)} \int\left(1- \frac{2x}{\lambda(t)}\tanh\left( \frac{x}{\lambda(t)}\right)\right)\frac{1}{\lambda(t)}\sech^2\left( \frac{x}{\lambda(t)} \right) u\px \eta.
\end{equation}
Let $c_0 := \frac12\min(\wt{C}_1,\wt{C}_2,\wt{C}_3,\wt{C}_4)$, for $\wt{C}_i$ as in \eqref{eq:virial1-1}. In view of \eqref{eq:virial1-1} (see also \cite[Lemma 3.1]{MPP} for complete details), we have the following estimate
\begin{equation}\label{eq:virial2-3}
\begin{aligned}
|\eqref{eq:gvirial-I} &+\eqref{eq:gvirial-J} + \eqref{eq:gvirial-K}| \\
\le&~{} \frac{c_0}{\lambda(t)} \int \sech^2\Big( \frac{x}{\lambda(t)}\Big) \left(u^2 + (\px u)^2 + \eta^2 + (\px \eta)^2\right)+\frac{\wt{C}\ve^2}{t \log ^2 t},
\end{aligned}
\end{equation}
for a fixed constant $\wt{C} > 0$. This enables us to obtain a new estimate for $\frac{d}{dt} \mathcal H(t)$ of the form
\begin{equation}\label{eq:virial2-3.1}
\begin{aligned}
\frac{\wt{C}\ve^2}{t \log ^2 t} + \frac{d}{dt} \mathcal H(t)\ge&~{}  \frac{\wt{C}_1}{2\lambda(t)}\int \sech^2\Big( \frac{x}{\lambda(t)}\Big) u^2 + \frac{\wt{C}_2}{2\lambda(t)}\int \sech^2\Big( \frac{x}{\lambda(t)}\Big) (\px u)^2 \\
&+\frac{\wt{C}_3}{2\lambda(t)}\int \sech^2\Big( \frac{x}{\lambda(t)}\Big) \eta^2 + \frac{\wt{C}_4}{2\lambda(t)}\int \sech^2\Big( \frac{x}{\lambda(t)}\Big) (\px \eta)^2.
\end{aligned}
\end{equation}
Then, the same argument as in the proof of Proposition \ref{prop:virial} proves Proposition \ref{prop:virial2}, since the left-hand side of \eqref{eq:virial2-3.1} is integrable in $t$ on $[2,\infty)$, but $\la(t)^{-1}$ does not integrate in $[2,\infty)$. 
\end{proof}

\bigskip
\section{Energy estimates}\label{ENERGY}

\medskip

The proof of estimate \eqref{Conclusion_0} requires to show that the local $H^1$ norms of $(u,\eta)(t)$ converge to zero for all sequences $t_n\to +\infty$, and not only a particular one. In order to prove such a result, we will use an energy estimate, recalling that, for small solutions, the $H^1\times H^1$ norm of $(u,\eta)(t)$ squared and the energy $E[u,\eta]$ in \eqref{Energy} are equivalent.

\subsection{Preliminaries} Let $\psi = \psi(x)$ be a smooth, nonnegative and bounded function, to be chosen in the sequel. We consider the localized energy functional defined by
\begin{equation}\label{eq:local energy}
E_{loc}(t) = \frac12 \int \psi(x) \left( - a(\px u)^2 - c(\px \eta)^2 + u^2 + \eta^2 + u^2\eta\right)(t,x)dx.
\end{equation}
(Compare with \eqref{Energy}.) For the sake of simplicity, we introduce the following notations: 
\be\label{ABFG}
\begin{aligned}
&A := au_{xx}+ u +u\eta, \hspace{2em} B: = c\eta_{xx} + \eta + \frac12u^2,\\
&F := (1-\px^2)^{-1} A, \hspace{2.9em} G: = (1-\px^2)^{-1}B.
\end{aligned}
\ee
(In other words, $F$ and $G$ are the canonical variables of $A$ and $B$, respectively.) We will aso use canonical variables $f$ and $g$ for $u$ and $\eta$, and the following identities:
\begin{equation}\label{eq:E34-2}
f_{xxx}g_x = (f_{xx}g_x)_x - f_{xx}g_{xx} \hspace{1em} \mbox{and} \hspace{1em} f_xg_{xxx} = (f_xg_{xx})_x - f_{xx}g_{xx},
\end{equation}
and
\be\label{Aux_00}
f_{xx}g = (f_xg)_x - f_xg_x \hspace{1em} \mbox{and} \hspace{1em} fg_{xx} = (fg_x)_x - f_xg_x.
\ee

\subsection{Variation of local energy} We have
\begin{lemma}[Variation of local energy $E_{loc}$]\label{lem:energy1}
Let $u$ and $\eta$ satisfy \eqref{boussinesq}. Let $f$ and $g$ be canonical variables of $u$ and $\eta$ as in \eqref{eq:fg}. Then, the following hold.
\begin{enumerate}
\item Time derivative. We have
\begin{equation}\label{eq:energy1}
\begin{aligned}
\frac{d}{dt} E_{loc}(t) =&~{} \int \psi' fg + (1-2(a+c)) \int \psi' f_xg_x \\
&+ (3ac-2(a+c))\int \psi'  f_{xx}g_{xx} + 3ac\int \psi'  f_{xxx}g_{xxx}\\
& -a\int \psi''  f_xg  -c \int \psi'' fg_x\\
& + a(c-2)\int \psi''  f_{xx}g_x+c(a-2)\int \psi'' f_xg_{xx}\\
&+SNL_1(t) + SNL_2(t) + SNL_3(t) + SNL_4(t).
\end{aligned}
\end{equation}
\item The small nonlinear parts $SNL_j(t)$ are given by
\begin{equation}\label{eq:SNL1}
SNL_1(t) := \frac12a \int (\psi' u_x)_x\nlop(u^2) +c \int (\psi' \eta_x)_x\nlop(u\eta), 
\end{equation}
\begin{equation}\label{eq:SNL2}
\begin{aligned}
SNL_2(t) :=&~{} \frac12 \int \psi' f \nlop (u^2) + \frac{a}{2} \int \psi' f_{xx} \nlop (u^2)\\
&+ \int \psi' g \nlop (u \eta) + c \int \psi' g_{xx} \nlop (u \eta)\\
&+ \frac12\int \psi' f_x \nlop (u^2)_x +  \int \psi' g_x \nlop (u \eta)_x ,
\end{aligned}
\end{equation}
\begin{equation}\label{eq:SNL3}
SNL_3(t) := \frac{a}{2} \int \psi' f_{xxx} \nlop (u^2)_x+c \int \psi' g_{xxx} \nlop (u \eta)_x,
\end{equation}
and
\begin{equation}\label{eq:SNL4}
\begin{aligned}
SNL_4(t) :=&~{}\frac12 \int \psi' \nlop(u\eta) \nlop(u^2) \\
&+ \frac12 \int \psi' \nlop (u\eta)_x \nlop (u^2)_x.
\end{aligned}
\end{equation}
\end{enumerate}
\end{lemma}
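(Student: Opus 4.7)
The plan is a direct computation: differentiate $E_{loc}(t)$ under the integral sign, use the evolution equations to eliminate every time derivative, and then unpack the result in terms of the canonical variables $f,g$. First I would write
\[
\frac{d}{dt}E_{loc}(t) = \int \psi\bigl[-a u_x u_{tx} - c\eta_x \eta_{tx} + uu_t + \eta\eta_t + uu_t\eta + \tfrac12 u^2\eta_t\bigr]
\]
and substitute $\pt u=-G_x$, $\pt \eta=-F_x$ from \eqref{eq:abcd}, so that $u_{tx}=-G_{xx}$ and $\eta_{tx}=-F_{xx}$. The algebraic identities $u+u\eta=A-au_{xx}$ and $\eta+\tfrac12 u^2=B-c\eta_{xx}$, together with the exact derivative $au_x G_{xx}+au_{xx}G_x=a(u_xG_x)_x$ and its $(c,\eta,F)$ analogue, allow a single integration by parts to produce the two ``first-corrector'' terms $-a\int\psi'u_xG_x$ and $-c\int\psi'\eta_xF_x$, leaving the remainder $-\int\psi AG_x-\int\psi BF_x$ to handle.

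The core of the computation is to rewrite this remainder purely in terms of $F,G$. Using $A=F-F_{xx}$, $B=G-G_{xx}$ and the product-rule identities $FG_x+GF_x=(FG)_x$, $F_{xx}G_x+G_{xx}F_x=(F_xG_x)_x$, integration by parts against $\psi$ recasts the remainder as a combination of $\int\psi'FG$ and $\int\psi' F_xG_x$. The case $\psi\equiv 1$ must reduce to $\frac{d}{dt}E[u,\eta]=0$, which provides a useful sanity check at this stage.

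Next, I would substitute the decompositions $F=f+af_{xx}+\nlop(u\eta)$ and $G=g+cg_{xx}+\tfrac12\nlop(u^2)$ (which follow from $u=f-f_{xx}$ and the identity $\nlop u_{xx}=f_{xx}$, together with the analogues for $\eta,g$) into the correctors and into $\int\psi'FG$, $\int\psi' F_xG_x$, splitting each contribution into a linear piece (polynomial in derivatives of $f,g$) and a nonlinear piece (with at least one factor $\nlop(u^2)$ or $\nlop(u\eta)$). The nonlinear contributions reassemble exactly into $SNL_1, SNL_2, SNL_3, SNL_4$: the two terms of $SNL_1$ come from the nonlinear parts of $G_x$ and $F_x$ inside the correctors (after an extra IBP that moves one derivative onto $\psi'u_x$, respectively $\psi'\eta_x$); $SNL_2$ and $SNL_3$ arise from pairing one linear with one nonlinear factor in $\int\psi'FG$ and $\int\psi'F_xG_x$; and $SNL_4$ gathers the two purely nonlinear products.

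It then remains to reduce the linear part, which a priori contains the eight monomials $fg,\,f_xg_x,\,f_{xx}g_{xx},\,f_{xxx}g_{xxx},\,fg_x,\,f_xg,\,f_xg_{xx},\,f_{xx}g_x$ weighted by $\psi'$, $\psi''$ or $\psi'''$. The standard identities $\int\psi'fg_{xx}=-\int\psi''fg_x-\int\psi'f_xg_x$ and $\int\psi'f_xg_{xxx}=-\int\psi''f_xg_{xx}-\int\psi'f_{xx}g_{xx}$, together with their symmetric counterparts, eliminate all third-order derivatives and redistribute everything among the eight monomials. The main obstacle here is pure bookkeeping: each of the four linear sources $\int\psi'FG$, $\int\psi'F_xG_x$, $-a\int\psi'u_xG_x$, $-c\int\psi'\eta_xF_x$ feeds several of the eight monomials after IBP, so I would organize the computation as a table with rows indexed by source and columns by monomial; reading off the columns then yields the coefficients $1,\ 1-2(a+c),\ 3ac-2(a+c),\ 3ac$ for the $\psi'$-diagonal terms and $-a,\,-c,\, a(c-2),\, c(a-2)$ for the $\psi''$-weighted cross terms claimed in \eqref{eq:energy1}.
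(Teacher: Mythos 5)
Your plan follows the paper's proof essentially step for step. The paper first integrates by parts to write $\frac{d}{dt}E_{loc}=\int\psi(u_tA+\eta_tB)+a\int\psi'u_tu_x+c\int\psi'\eta_t\eta_x$ and then substitutes $u_t=-G_x$, $\eta_t=-F_x$; you substitute first and then use $au_xG_{xx}+au_{xx}G_x=a(u_xG_x)_x$, which produces exactly the same four blocks ($\int\psi'FG$, the $F_xG_x$ block, and the two correctors $-a\int\psi'u_xG_x$, $-c\int\psi'\eta_xF_x$). The subsequent splitting of $F=f+af_{xx}+\nlop(u\eta)$ and $G=g+cg_{xx}+\frac12\nlop(u^2)$ into linear and nonlinear pieces, and the assignment of the nonlinear pieces to $SNL_1,\dots,SNL_4$ (in particular the extra integration by parts producing $\frac a2\int(\psi'u_x)_x\nlop(u^2)$ from the corrector) match \eqref{eq:SNL1}--\eqref{eq:SNL4} precisely.

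The one point where your proposal does not close is the sign of the $F_xG_x$ block, and there your asserted final coefficients do not follow from your own (correctly described) intermediate step. From the remainder $-\int\psi(AG_x+BF_x)$ with $A=F-F_{xx}$, $B=G-G_{xx}$ one gets
\[
-\int\psi\bigl[(FG)_x-(F_xG_x)_x\bigr]=\int\psi'FG-\int\psi'F_xG_x,
\]
so the $F_xG_x$ source enters your table with a \emph{minus} sign. Carrying that minus through changes five of the eight linear columns: one obtains $-\bigl(1+2(a+c)\bigr)$ for $\int\psi'f_xg_x$, $3ac$ for $\int\psi'f_{xx}g_{xx}$, $ac$ for $\int\psi'f_{xxx}g_{xxx}$, and $ac$, $ac$ for the two $\psi''$ cross terms, rather than $1-2(a+c)$, $3ac-2(a+c)$, $3ac$, $a(c-2)$, $c(a-2)$ as you (and the statement) claim; the contributions to $SNL_2$, $SNL_3$, $SNL_4$ originating from this block flip sign as well. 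A quick discriminating check: at $a=c=0$ on the linearized system the computation gives $\int\psi'fg-\int\psi'f_xg_x$, whereas \eqref{eq:energy1} would give $\int\psi'fg+\int\psi'f_xg_x$; note that your proposed sanity check $\psi\equiv1$ cannot see this, since every term carries a $\psi'$ or $\psi''$, while $\psi=x$ does. The paper's own proof contains the same slip (it defines $E_2:=-\int\psi'F_xG_x$ and then evaluates $E_2=+\int\psi'F_xG_x$ in \eqref{eq:E2}), so you are reproducing the printed statement rather than the outcome of the computation you describe. This is harmless downstream --- Proposition \ref{prop:energy} only uses $|\frac{d}{dt}E_{loc}(t)|\lesssim\int\vp'\bigl(u^2+(\px u)^2+\eta^2+(\px\eta)^2\bigr)$, for which the exact constants and signs are irrelevant --- but if you present \eqref{eq:energy1} as an identity, the coefficients must be corrected.
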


\begin{proof}
We take the time derivative to $E_{loc}(t)$. The change of variable with notations $A, B, F$ and $G$ yields
\[\begin{aligned}
\frac{d}{dt} E_{loc}(t) =&~{} \int \psi \left(-au_xu_{xt} + u u_t -c\eta_x\eta_{xt} + \eta \eta_t +u\eta u_t + \frac12 u^2  \eta_t\right)\\
=&~{}\int \psi\left(u_t A + \eta_t B\right) + a\int \psi'u_t u_x + c\int \psi'\eta_t \eta_x\\
=&~{}-\int \psi \left(G_x (F-F_{xx}) + F_x (G-G_{xx}) \right) + a\int \psi'u_t u_x + c\int \psi'\eta_t \eta_x\\
=&~{}\int \psi' FG - \int \psi'F_xG_x + a\int \psi'u_tu_x + c\int \psi' \eta_t  \eta_x\\
=:&~{} E_1+E_2+E_3+E_4.
\end{aligned}\]
We first focus on $E_3$ and $E_4$. Using \eqref{eq:abcd} and the integration by parts gives
\begin{equation}\label{eq:E3}
\begin{aligned}
E_3 =&~{} a\int \psi'u_tu_x\\
=& ~{}  a\int \psi'  u_x \left( c \px \eta -(1+c)(1-\partial_x^2)^{-1}\px \eta - (1-\partial_x^2)^{-1}\px(\frac12u^2) \right)\\
=& ~{} ac\int \psi' u_x\eta_x - a(c+1)\int \psi' u_x (1-\px^2)^{-1}\eta_x\\
&+\frac12 a\int (\psi' u_x)_x(1-\px^2)^{-1}(u^2),
\end{aligned}
\end{equation}
and
\begin{equation}\label{eq:E4}
\begin{aligned}
E_4 =&~{} c\int \psi' \eta_t  \eta_x \\
=&~{} c\int \psi'   \eta_x \left( a \px u -(1+a)(1-\partial_x^2)^{-1}\px u - (1-\partial_x^2)^{-1}\px(u\eta)  \right) \\
=& ~{} ac\int \psi' \eta_x u_x - c(a+1) \int \psi' \eta_x(1-\px^2)^{-1}u_x\\
&+ c\int (\psi' \eta_x)_x(1-\px^2)^{-1}(u\eta).
\end{aligned}
\end{equation}
The last terms in both \eqref{eq:E3} and \eqref{eq:E4} corresponds to terms in  $SNL_1(t)$. For the remaining terms in both \eqref{eq:E3} and \eqref{eq:E4}, by using canonical variables $f$ and $g$ for $u$ and $\eta$, we have
\begin{equation}\label{eq:E34-1.1}
\begin{aligned}
2ac  \int \psi' u_x\eta_x  = &~ {} 2ac\int\psi'(f_xg_x + f_{xxx}g_{xxx}) \\
&~{} -2ac\int\psi'(f_{xxx}g_x + f_xg_{xxx}),
\end{aligned}
\end{equation}
\begin{equation}\label{eq:E34-1.2}
\begin{aligned}
- a(c+1)\int \psi' u_x (1-\px^2)^{-1}\eta_x= &~{}  -a(c+1)\int\psi'f_xg_x \\
&~ {} +a(c+1)\int\psi'f_{xxx}g_x,
\end{aligned}
\end{equation}
and
\begin{equation}\label{eq:E34-1.3}
\begin{aligned}
- c(a+1) \int \psi' \eta_x(1-\px^2)^{-1}u_x =&~ {}  -c(a+1)\int\psi'f_xg_x \\
&~ {} +c(a+1)\int\psi'f_xg_{xxx}.
\end{aligned}
\end{equation}
Substituting the identities \eqref{eq:E34-2} into \eqref{eq:E34-1.1}-\eqref{eq:E34-1.3}, and collecting all terms we have
\begin{equation}\label{eq:E34-3}
\begin{aligned}
E_3+ E_4 =&~{} -(a+c)\int \psi'f_xg_x + (2ac-(a+c))\int\psi'f_{xx}g_{xx} \\
&+ 2ac\int\psi'f_{xxx}g_{xxx}\\
&+a(c-1)\int\psi''f_{xx}g_x+c(a-1)\int\psi''f_xg_{xx}\\
&+SNL_1(t).
\end{aligned}
\end{equation}
\medskip

Now we deal with $E_1$ and $E_2$. From \eqref{ABFG} we have
\[
F = a\, f_{xx} + f + (1-\px^2)^{-1}(u\eta) \quad \mbox{and} \quad G = c\, g_{xx} + g + \frac12(1-\px^2)^{-1}(u^2).
\]
Using \eqref{eq:E34-2} and \eqref{Aux_00}, then a direct calculation  gives
\begin{equation}\label{eq:E1}
\begin{aligned}
E_1=\int \psi' FG =&~{} ac \int \psi'f_{xx}g_{xx} -(a+c) \int \psi'f_xg_x  + \int \psi'fg\\
&-a\int \psi''f_x g -c\int \psi'' f g_x\\ 
&+\frac{a}{2} \int \psi'f_{xx} (1-\px^2)^{-1}(u^2) +\frac12\int \psi'f(1-\px^2)^{-1}(u^2)\\
&+c\int \psi'g_{xx}(1-\px^2)^{-1}(u\eta) + \int \psi'g(1-\px^2)^{-1}(u\eta) \\
&+\frac12\int \psi'(1-\px^2)^{-1}(u\eta)(1-\px^2)^{-1}(u^2),
\end{aligned}
\end{equation}
and
\begin{equation}\label{eq:E2}
\begin{aligned}
E_2=\int \psi' F_xG_x =& ~{}ac \int \psi'f_{xxx}g_{xxx} -(a+c) \int \psi'f_{xx} g_{xx}  + \int \psi'f_xg_x\\
&-a\int \psi''f_{xx}g_x -c\int \psi''f_xg_{xx} \\
&+\frac{a}{2} \int \psi'f_{xxx} (1-\px^2)^{-1}(u^2)_x +\frac12\int \psi'f_x(1-\px^2)^{-1}(u^2)_x\\
&+c\int \psi'g_{xxx}(1-\px^2)^{-1}(u\eta)_x + \int \psi'g_x(1-\px^2)^{-1}(u\eta)_x \\
&+\frac12\int \psi'(1-\px^2)^{-1}(u\eta)_x(1-\px^2)^{-1}(u^2)_x,
\end{aligned}
\end{equation}
respectively. \eqref{eq:E34-3}, \eqref{eq:E1} and \eqref{eq:E2} prove \eqref{eq:energy1} with \eqref{eq:SNL1}-\eqref{eq:SNL4}.
\end{proof}

\begin{remark}\label{lambda_t}
The previous computations can be easily extended to the case of a weight $\psi(x)$ in \eqref{eq:local energy} depending on time, after some direct estimates on the new emergent terms are carried out. We will prove and use this fact in next Section.
\end{remark}

\bigskip

\section{Proof of the Theorem \ref{Thm1}}\label{7}

\medskip

Now we finally prove Theorem \ref{Thm1}. For the sake of simplicity in some computations, we first deal with the case of a time independent parameter $\la$, and then we extend the result to the case of $\la(t)$ given in \eqref{eq:lambda}. Let us take 
\begin{equation}\label{eq:weight1}
\psi(x) := \lambda\sech^4\left( \frac{x}{\lambda}\right) = \lambda(\vp')^2,
\end{equation}
in the localized energy $E_{loc}(t)$ \eqref{eq:local energy}, where $\vp'$ is chosen as in the proof of Proposition \ref{prop:virial}. Note that
\begin{equation}\label{eq:weight3}
|\psi'(x)| \le 4\vp' \qquad \mbox{and} \qquad |\psi''(x)| \le \frac{20}{\lambda} \vp'.
\end{equation}

Using Proposition \ref{prop:virial} and Lemmas \ref{lem:energy1}, \ref{lem:nonlinear1}, \ref{lem:nonlinear2}, \ref{lem:nonlinear3}, \ref{lem:L2 comparable} and \ref{lem:H1 comparable} carries out the following Proposition:
\begin{proposition}\label{prop:energy}
Let $(a,b,c)$ be \emph{dispersion-like} parameters defined as in Definition \ref{Dis_Par}. Let $(u,\eta)(t)$ be $H^1 \times H^1$ global solutions to \eqref{boussinesq} such that \eqref{Smallness} holds. Then, we have for large (but fixed) $\lambda \gg 1$ that
\begin{equation}\label{eq:energy2}
\lim_{t \to \infty} \int \sech^4 \left(\frac{x}{\lambda}\right) \left(u^2 + (\px u)^2 + \eta^2 + (\px \eta)^2\right)(t,x) \; dx = 0.
\end{equation}
\end{proposition}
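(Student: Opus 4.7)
The plan is to prove Proposition \ref{prop:energy} by combining three ingredients: (a) equivalence between $E_{loc}(t)$ and the local $H^1\times H^1$ quantity on the left of \eqref{eq:energy2}; (b) absolute integrability in time of $\frac{d}{dt}E_{loc}(t)$; and (c) the subsequential decay statement \eqref{eq:virial2}. Together these upgrade convergence along a sequence into a full limit as $t \to \infty$.

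\textbf{Step 1: Equivalence.} By Sobolev embedding and \eqref{Smallness} we have $\|\eta(t)\|_{L^\infty} \lesssim \|\eta(t)\|_{H^1} \lesssim \ve$, and $-a,-c$ are strictly positive. Consequently the cubic contribution $\int \psi u^2 \eta$ in \eqref{eq:local energy} is absorbed by the quadratic part, giving
\[
E_{loc}(t) \; \asymp \; \int \psi(x)\left(u^2 + \eta^2 + u_x^2 + \eta_x^2\right)(t,x)\,dx,
\]
with $\psi(x)=\lambda\sech^4(x/\lambda)$.

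\textbf{Step 2: Integrable time derivative.} By Lemma \ref{lem:energy1}, each term in \eqref{eq:energy1} carries either $\psi'$ or $\psi''$, and by \eqref{eq:weight3} both are pointwise controlled by the virial weight $\vp'=\sech^2(x/\lambda)$ (with an additional $\lambda^{-1}$ gain for $\psi''$). Rewriting the quadratic terms in canonical variables and applying Cauchy--Schwarz together with Lemmas \ref{lem:L2 comparable} and \ref{lem:H1 comparable} bounds the quadratic part of $\frac{d}{dt}E_{loc}(t)$ by a constant times $\int \vp'(u^2+\eta^2+u_x^2+\eta_x^2)$. The nonlinear pieces $SNL_1,SNL_2,SNL_3,SNL_4$, which all involve $(1-\px^2)^{-1}$ acting on cubic or quartic products, are handled by Lemmas \ref{lem:nonlinear1}, \ref{lem:nonlinear2}, \ref{lem:nonlinear3}; each of these produces an additional factor $\|u\|_{H^1}+\|\eta\|_{H^1}\lesssim \ve$, and so they too are controlled by $\ve\int \vp'(u^2+\eta^2+u_x^2+\eta_x^2)$. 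Hence, for any $t\geq 2$,
\[
\left|\frac{d}{dt}E_{loc}(t)\right| \;\lesssim\; \int \vp'(x)\left(u^2+\eta^2+u_x^2+\eta_x^2\right)(t,x)\,dx.
\]
By Proposition \ref{prop:virial}, the right-hand side is integrable on $[2,\infty)$, so $t\mapsto E_{loc}(t)$ has bounded variation on $[2,\infty)$ and therefore admits a limit $L:=\lim_{t\to\infty} E_{loc}(t)$.

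\textbf{Step 3: The limit is zero.} By \eqref{eq:virial2}, there is a sequence $t_n\to\infty$ along which $\int\sech^2(x/\lambda)(u^2+\eta^2+u_x^2+\eta_x^2)(t_n)\to 0$. Since $\sech^4\leq\sech^2$, this forces $E_{loc}(t_n)\to 0$, and thus $L=0$. Returning to the equivalence of Step 1 yields \eqref{eq:energy2}.

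The main obstacle lies in Step 2: one must arrange each of the nonlocal cubic and quartic expressions inside $SNL_2,SNL_3,SNL_4$ into a form to which Lemmas \ref{lem:nonlinear1}--\ref{lem:nonlinear3} directly apply, so that the output weight is precisely $\vp'$ (matching the right-hand side of \eqref{eq:virial1}) and the quadratic block is exactly $u^2+\eta^2+u_x^2+\eta_x^2$. In particular, $SNL_4$ contains a product of two applications of $(1-\px^2)^{-1}$, which must be reorganized so that one factor is absorbed in $L^\infty$ (using \eqref{eq:inverse op2} and the smallness of $\|u\|_{H^1}+\|\eta\|_{H^1}$) while the other integrates against $\vp'$ to give the local quadratic quantity. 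Once this bookkeeping is done, the conclusion follows immediately from Steps 1 and 3.
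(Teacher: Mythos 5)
Your proposal is correct and follows essentially the same route as the paper: the same bound $\left|\frac{d}{dt}E_{loc}(t)\right|\lesssim \int \vp'\left(u^2+u_x^2+\eta^2+\eta_x^2\right)$ via Lemmas \ref{lem:L2 comparable}--\ref{lem:nonlinear3}, integrability from Proposition \ref{prop:virial}, and the subsequence from \eqref{eq:virial2}. The only cosmetic difference is that you phrase the conclusion as ``bounded variation plus a vanishing subsequential limit,'' whereas the paper integrates on $[t,t_n]$ and exhibits the explicit tail bound $\int \sech^4(x/\lambda)(\cdots)(t)\lesssim \int_t^{\infty}\int \sech^2(x/\lambda)(\cdots)$; these are logically equivalent.
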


\begin{proof}
The Cauchy-Schwarz inequality and \eqref{eq:weight3} allows that first four lines in the right-hand side of \eqref{eq:energy1} are bounded by the terms
\begin{equation}\label{eq:energy2-1}
\begin{aligned}
& \int \vp' (a_1f^2 + a_2f_x^2 + a_3f_{xx}^2 + a_4f_{xxx}^2) \\
& \qquad + \int \vp' (b_1g^2 + b_2g_x^2 + b_3g_{xx}^2 + b_4g_{xxx}^2),
\end{aligned}
\end{equation}
for some positive constants $a_j, b_j$, $j=1,2,3,4$. Coming back to standard variables, Lemmas \ref{lem:L2 comparable} and \ref{lem:H1 comparable} guarantee that 
\begin{equation}\label{eq:energy2-2}
\eqref{eq:energy2-1} \lesssim \int \vp' (u^2 + (\px u)^2 + \eta ^2 + (\px \eta)^2).
\end{equation} 
It remains to control the nonlinear terms $SNL_i(t)$, $i=1,2,3,4$, in \eqref{eq:energy1}. For $SNL_1(t)$, we use Lemma \ref{lem:nonlinear2} to obtain
\begin{equation}\label{eq:energy2-3}
|SNL_1(t)| \lesssim (\norm{u}_{H^1}+\norm{\eta}_{H^1})\int \vp' (u^2 + (\px u)^2 + \eta ^2 + (\px \eta)^2).
\end{equation}
For $SNL_2(t)$, since $f$ and $g$ are in $H^3$ (with $\norm{f}_{H^3} \lesssim \norm{u}_{H^1}$ and $\norm{f}_{H^3} \lesssim \norm{u}_{H^1}$), we have from Lemma \ref{lem:nonlinear1} that
\begin{equation}\label{eq:energy2-4}
|SNL_2(t)| \lesssim (\norm{u}_{H^1}+\norm{\eta}_{H^1})\int \vp' (u^2 + (\px u)^2 + \eta ^2 + (\px \eta)^2).
\end{equation}
For $SNL_3(t)$, we use the Sobolev embedding ($\norm{f_{xx}}_{L^{\infty}} \lesssim \norm{u}_{H^1}$) and Lemma \ref{lem:nonlinear3} to obtain
\begin{equation}\label{eq:energy2-5}
|SNL_3(t)| \lesssim (\norm{u}_{H^1}+\norm{\eta}_{H^1})\int \vp' (u^2 + (\px u)^2 + \eta ^2 + (\px \eta)^2).
\end{equation}
Lastly, for $SNL_4(t)$, note that
\[\norm{\nlop (u\eta)}_{L^{\infty}} + \norm{\nlop (u\eta)_x}_{L^{\infty}} \lesssim \norm{u\eta}_{L^2} \lesssim (\norm{u}_{H^1} + \norm{\eta}_{H^1}).\]
Lemma \ref{lem:nonlinear1} gives
\begin{equation}\label{eq:energy2-6}
|SNL_4(t)| \lesssim (\norm{u}_{H^1}+\norm{\eta}_{H^1})\int \vp' (u^2 + (\px u)^2 + \eta ^2 + (\px \eta)^2).
\end{equation}
Collecting all \eqref{eq:energy2-2}-\eqref{eq:energy2-6} we obtain
\begin{equation}\label{eq:energy2-7}
\left|\frac{d}{dt} E_{loc}(t) \right| \lesssim  \int \sech^2 \left(\frac{x}{\lambda}\right) \left(u^2 + (\px u)^2 + \eta^2 + (\px \eta)^2\right) (t,x) \; dx.
\end{equation}
Integrating on $[t, t_n]$, for $t < t_n$ as in \eqref{eq:virial2} and Proposition \ref{prop:virial} yield
\[
\begin{aligned}
\left|E_{loc}(t_n) - E_{loc}(t) \right| \lesssim &~ \int_t^{\infty}\!\! \int \sech^2 \left(\frac{x}{\lambda}\right) (u^2 + (\px u)^2 + \eta^2 + (\px \eta)^2)(t,x) \; dxdt \\
&~ < \infty.
\end{aligned}
\]
Note that, from the Sobolev embedding (with $\norm{\eta}_{H^1} \lesssim \ve$), we have 
\[
|E_{loc}(t_n)| \lesssim \int \vp'(u^2 + (\px u)^2 + \eta^2 + (\px \eta)^2) (t_n) \longrightarrow 0, \quad \mbox{ as } n \to \infty,
\]
thanks to Proposition \ref{prop:virial}. Thus, by sending $t_n \to \infty$, we have
\[\begin{aligned}
\int \sech^4\left( \frac{x}{\lambda} \right)& (u^2 + (\px u)^2 + \eta^2 + (\px \eta)^2 + \eta u^2) (t) \\
&\lesssim \int_t^{\infty}\!\!\int \sech^2 \left(\frac{x}{\lambda}\right) (u^2 + (\px u)^2 + \eta^2 + (\px \eta)^2).
\end{aligned}\]
Once again, the Sobolev embedding (with $\norm{\eta}_{H^1} \lesssim \ve$) guarantees that
\[
\lim_{t \to \infty}\int \sech^4\left( \frac{x}{\lambda} \right) (u^2 + (\px u)^2 + \eta^2 + (\px \eta)^2) (t) = 0,
\]
which completes the proof of Proposition \ref{prop:energy}.
\end{proof}

Replacing the parameter $\lambda$ by the time-dependent function $\lambda(t)$ defined in \eqref{eq:lambda}, Proposition \ref{prop:energy} extends to the case of a time-dependent, increasing interval in space. First, we take in \eqref{eq:energy1}:
\be\label{psi_psi}
\psi (t,x):= \sech^4 \Big(\frac{x}{\lambda(t)}\Big).
\ee
\begin{proposition}\label{prop:energy2}
Let $(a,b,c)$ be \emph{dispersion-like} parameters defined as in Definition \ref{Dis_Par}. Let $(u,\eta)(t)$ be $H^1 \times H^1$ global solutions to \eqref{boussinesq} such that \eqref{Smallness} holds. Then, we have 
\begin{equation}\label{eq:energy2.1}
\lim_{t \to \infty} \int \sech^4 \left(\frac{x}{\lambda(t)}\right) \left(u^2 + (\px u)^2 + \eta^2 + (\px \eta)^2\right)(t,x) \; dx = 0.
\end{equation}
\end{proposition}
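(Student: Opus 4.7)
The plan is to adapt the proof of Proposition \ref{prop:energy} by replacing the time-independent weight with the time-dependent $\psi(t,x)=\sech^4(x/\lambda(t))$ in the definition \eqref{eq:local energy} of $E_{loc}(t)$. Since the derivation of Lemma \ref{lem:energy1} only uses spatial integration by parts, the identity \eqref{eq:energy1} remains valid with this new $\psi$ modulo an extra term arising from the time dependence of the weight, namely
\[
R(t):=\frac12\int(\partial_t\psi)(t,x)\left(-au_x^2-c\eta_x^2+u^2+\eta^2+u^2\eta\right)(t,x)\,dx.
\]

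For the ``geometric'' terms inherited from \eqref{eq:energy1}, a direct computation gives $|\psi'|\lesssim \lambda(t)^{-1}\sech^2(x/\lambda(t))$ and $|\psi''|\lesssim \lambda(t)^{-2}\sech^2(x/\lambda(t))$. Rerunning verbatim the Cauchy--Schwarz and nonlinear estimates \eqref{eq:energy2-1}--\eqref{eq:energy2-7} (via Lemmas \ref{lem:L2 comparable}--\ref{lem:H1 comparable} and \ref{lem:nonlinear1}--\ref{lem:nonlinear3}) then yields
\[
\left|\frac{d}{dt}E_{loc}(t)-R(t)\right|\lesssim \frac{1}{\lambda(t)}\int\sech^2\!\left(\frac{x}{\lambda(t)}\right)(u^2+u_x^2+\eta^2+\eta_x^2)(t,x)\,dx,
\]
a quantity integrable in $t$ on $[2,\infty)$ by Proposition \ref{prop:virial2}.

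The key observation, which avoids any fine control of $R(t)$ itself, is that $R(t)$ has a favourable sign. Indeed, $\partial_t\psi=4(\lambda'(t)/\lambda(t))\,y\tanh(y)\sech^4(y)$ with $y=x/\lambda(t)$. Since $\lambda'(t)>0$ for $t$ large (as $\lambda(t)=C_0 t/\log^2 t$) and $y\tanh(y)\ge 0$, one gets $\partial_t\psi\ge 0$. On the other hand, $\|\eta\|_{L^\infty}\lesssim\|\eta\|_{H^1}\lesssim\ve$ and $a,c<0$ give the lower bound $-au_x^2-c\eta_x^2+u^2+\eta^2+u^2\eta\ge \tfrac12 u^2+\eta^2+|a|u_x^2+|c|\eta_x^2\ge 0$. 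Hence $R(t)\ge 0$ for all sufficiently large $t$, so that $\frac{d}{dt}E_{loc}(t)\ge -|\mathrm{geom}(t)|$ with $|\mathrm{geom}(t)|$ integrable.

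Integrating this one-sided bound from $t$ up to $t_n$, where $\{t_n\}$ is the sequence furnished by \eqref{eq:virial2-2}, yields
\[
0\le E_{loc}(t)\le E_{loc}(t_n)+C\int_t^{\infty}\!\frac{1}{\lambda(s)}\!\int\sech^2\!\left(\frac{x}{\lambda(s)}\right)(u^2+u_x^2+\eta^2+\eta_x^2)(s,x)\,dx\,ds.
\]
By Sobolev embedding and smallness, $|E_{loc}(t_n)|\lesssim\int\sech^2(x/\lambda(t_n))(u^2+u_x^2+\eta^2+\eta_x^2)(t_n,x)\,dx\to 0$ by \eqref{eq:virial2-2}, while the double integral tends to $0$ as $t\to\infty$ by Proposition \ref{prop:virial2}. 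Combined with the pointwise lower bound $E_{loc}(t)\gtrsim \int\sech^4(x/\lambda(t))(u^2+u_x^2+\eta^2+\eta_x^2)(t,x)\,dx$ (obtained from smallness of $\ve$ and $a,c<0$), this proves \eqref{eq:energy2.1}. The main obstacle to a naive approach is that a pointwise bound $|\partial_t\psi|\lesssim (\lambda'/\lambda)\sech^2(y)\sim \sech^2(y)/t$ yields only $|R(t)|\lesssim \ve^2/t$, which is not time-integrable on $[2,\infty)$; the sign argument bypasses this obstruction entirely.
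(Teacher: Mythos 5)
Your proof is correct, but it handles the new term $R(t)=\frac12\int\psi_t\left(-au_x^2-c\eta_x^2+u^2+\eta^2+u^2\eta\right)$ by a genuinely different route than the paper, and the obstruction you invoke to justify that route is not actually there. The paper simply estimates
\[
|\psi_t| = \left|4\frac{\lambda'(t)}{\lambda(t)}\frac{x}{\lambda(t)}\sech^4\left(\frac{x}{\lambda(t)}\right)\tanh\left(\frac{x}{\lambda(t)}\right)\right| \lesssim \frac{1}{\lambda(t)}\sech^2\left(\frac{x}{\lambda(t)}\right),
\]
using $|y\tanh(y)\sech^2(y)|\lesssim 1$ and $\lambda'(t)/\lambda(t)\sim 1/t\lesssim_{C_0} 1/\lambda(t)$ (see \eqref{eq:lambda1}); hence $|R(t)|\lesssim\frac{1}{\lambda(t)}\int\sech^2(x/\lambda(t))\left(u^2+u_x^2+\eta^2+\eta_x^2\right)$, which is integrable on $[2,\infty)$ by Proposition \ref{prop:virial2} --- exactly like every other term, so no sign consideration is needed. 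Your claim that the direct bound only yields $|R(t)|\lesssim\ve^2/t$ comes from discarding the spatial localization and replacing the local integral by global $H^1$ norms; keeping the $\sech^2$ weight and invoking \eqref{eq:virial2-1} is precisely why Proposition \ref{prop:virial2} was proved with the $1/\lambda(t)$ prefactor. That said, your monotonicity observation is valid: $\lambda'(t)>0$ for $t>e^2$, so $\psi_t=4(\lambda'/\lambda)\,y\tanh(y)\sech^4(y)\ge 0$ there, and the localized energy density is pointwise nonnegative for $\ve$ small, giving $R(t)\ge0$ and the one-sided inequality $\frac{d}{dt}E_{loc}(t)\ge-|\mathrm{geom}(t)|$ with $|\mathrm{geom}(t)|$ integrable; integrating up to the sequence $t_n$ from \eqref{eq:virial2-2} and using $E_{loc}(t)\gtrsim\int\sech^4(x/\lambda(t))\left(u^2+u_x^2+\eta^2+\eta_x^2\right)$ then closes the argument. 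So your proof is complete; what it buys is robustness (it would survive even if the $\psi_t$ contribution were not absolutely integrable in time), at the cost of misdiagnosing the difficulty --- the paper's more economical treatment shows the extra term is harmless for the same reason as all the others.
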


\begin{remark}
This last proposition finally proves Theorem \ref{Thm1}, since
\[
\begin{aligned}
& \int \sech^4 \left(\frac{x}{\lambda(t)}\right) \left(u^2 + (\px u)^2 + \eta^2 + (\px \eta)^2\right)(t,x) \; dx \\
& \qquad \gtrsim \int_{-\la(t)}^{\la(t)} \left(u^2 + (\px u)^2 + \eta^2 + (\px \eta)^2\right)(t,x) \; dx.
\end{aligned}
\]
\end{remark}

\begin{proof}
Similarly as the proof of Proposition \ref{prop:virial2}, it suffices to deal with the following additional term in the energy estimate \eqref{eq:energy1}:
\begin{equation}\label{eq:energy2.2}
\int \psi_t \left(u^2 + (\px u)^2 + \eta^2 + (\px \eta)^2\right).
\end{equation}
Note that from \eqref{eq:lambda1} and \eqref{psi_psi},
\[
\begin{aligned}
 |\psi_t| = \left|4 \frac{\lambda'(t)}{\lambda(t)}\frac{x}{\lambda(t)}\sech^4\left(\frac{x}{\lambda(t)}\right) \tanh\left(\frac{x}{\lambda(t)}\right) \right| \lesssim \frac{1}{\lambda(t)}\sech^2\left(\frac{x}{\lambda(t)}\right).
\end{aligned}
\]
This estimate immediately implies 
\begin{equation}\label{eq:energy2.3}
\eqref{eq:energy2.2} \lesssim  \frac{1}{\lambda(t)}\int \sech^2 \left(\frac{x}{\lambda(t)}\right) (u^2 + (\px u)^2 + \eta^2 + (\px \eta)^2).
\end{equation}
Therefore, Proposition \ref{prop:virial2}, and the same argument of the proof of Proposition \ref{prop:energy}, together with estimate \eqref{eq:energy2.3} prove Proposition \ref{prop:energy2}.
\end{proof}

\appendix

\bigskip

\section{Full description of Remark \ref{Velocity_group}}\label{A}

\medskip

First of all, we recall the data given in Remark \ref{Velocity_group}. The dispersion relation is given by
\[
w(k) = \frac{\pm |k| }{1+ k^2}(1-ak^2)^{1/2}(1-ck^2)^{1/2},
\]
and the group velocity is
\[
|w'(k)|=  \frac{|ack^6 +3ack^4 -(1+2a+2c)k^2 +1|}{(1+k^2)^2 (1-ak^2)^{1/2}(1-ck^2)^{1/2}}.
\]
Since $a+c = -2 + \frac{1}{3b}$, we get 
\[
ack^6 +3ack^4 -(1+2a+2c)k^2 +1 = ack^6 +3ack^4 + \left(3- \frac{2}{3b} \right)k^2 +1.
\]
The cubic polynomial $p(\mu):=ac \mu^3 +3ac \mu^2 + (3- \frac{2}{3b} )\mu +1$ ($\mu\geq 0$, $ac>0$) never vanishes if its minimum value is positive. The derivative $p'(\mu)= 3ac \mu^2 +6ac \mu + (3- \frac{2}{3b} )$ has roots $\mu_\pm= -1 \pm (1+ \frac{2/(3b)-3}{3ac})^{1/2}$, for which the condition $\mu_\pm> 0$ does not hold if  $b\geq 2/9$. In this case readily we have $p(\mu) \geq 1$ for all $\mu\geq 0$. In consequence, $|w'(k)|>0$ for all $k\in \R$.

\bigskip

\section{Alternative expression of $\mathcal{P}$ in \eqref{eq:parameter set}.}\label{B}

\medskip

We recall from Subsection \ref{abc Conds} the alternative expression \eqref{eq:parameter set0} of the set $\mathcal{P}_0(b) = \mathcal{B}_0 \cap \mathcal{B}_1(b) \cap \mathcal{B}_2(b) \cap \mathcal{B}_3(b)$, where
\begin{equation}\label{eq:parameter set0.1}
\mathcal{P}_0(b) =~{} \left\{(a,b,c)=\left(-\frac{\nu}{2} + \frac13 -b,\; b,\; \frac{\nu}{2} - b \right) : \nu \in [0,1] \cap \left(\frac23 - 2b, 2b \right)\right\} \neq \emptyset,
\end{equation}
for $b>1/6$. Note in addition that the open interval 
\[ 
\left(\frac23 - 2b, 2b \right) \neq \emptyset
\]
for $b>1/6$. Recall the set $\mathcal P_0$ defined in \eqref{P0}:
\[
\mathcal{P}_0 = \bigcup_{b>1/6} \mathcal{P}_0(b).
\] 
Note that $\mathcal{P}_0$ is precisely the image $(a,b,c)$ of the bold set in $(b,\nu)$ in Figure \ref{Fig:0}, right panel.  Then, Lemma \ref{1o6} can be rewritten as
\begin{lemma}
There exist $(a,b,c,d)$ such that \eqref{Conds0} and  \eqref{Conds2} are satisfied if and only if $b>\frac16$. In this case, $(a,b,c) \in \mathcal{P}_0$. 
\end{lemma}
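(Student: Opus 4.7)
My plan is to show both directions of the equivalence by working directly with the algebraic constraints \eqref{Conds0}--\eqref{Conds2}, and then to identify the resulting admissible set with the parametric description \eqref{eq:parameter set0.1}.

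For the necessary direction, I would first combine the two equations in \eqref{Conds0} to obtain the invariant $a+b+c+d = \frac13$, and then impose $b=d$ from \eqref{Conds2} to rewrite this as $a + c = \frac13 - 2b$. Since \eqref{Conds2} forces $a,c<0$, the left-hand side is strictly negative, so $\frac13 - 2b < 0$, which immediately yields $b > \frac16$. This half is essentially \eqref{eq:par_cond3}.

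For the sufficient direction, assuming $b > \frac16$, the plan is to produce an explicit family of admissible quadruples. I would set $d := b$, choose a parameter $\nu \in [0,1]$, and define
\[
a := -\tfrac{\nu}{2} + \tfrac13 - b, \qquad c := \tfrac{\nu}{2} - b, \qquad \theta := \sqrt{1-\nu}\,\in[0,1].
\]
A direct substitution checks that $a+b = \frac12(\theta^2 - \frac13)$ and $c+d = c+b = \frac12(1-\theta^2) \ge 0$, so \eqref{Conds0} holds. The conditions $a<0$ and $c<0$ then translate respectively into $\nu > \frac23 - 2b$ and $\nu < 2b$, i.e., $\nu \in \left(\tfrac23 - 2b, 2b\right) \cap [0,1]$. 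The open interval $\left(\tfrac23 - 2b, 2b\right)$ is nonempty precisely when $b > \frac16$; moreover, under this assumption $2b > \frac13 > 0$ and $\frac23 - 2b < \frac13 < 1$, so the intersection with $[0,1]$ is automatically nonempty. Hence an admissible $\nu$ exists, producing a valid $(a,b,c,d)$.

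Finally, the displayed formula for $(a,b,c)$ is exactly \eqref{eq:parameter set0}, so every admissible triple lies in $\mathcal{P}_0(b)$, and taking the union over $b>\frac16$ places $(a,b,c) \in \mathcal{P}_0$. Conversely, \eqref{eq:par_cond3}--\eqref{eq:par_cond5} show that any admissible triple can be recovered from such a parameter $\nu$ (via $\nu = 1-\theta^2$), so the parametric description captures all admissible $(a,b,c)$. I do not anticipate a serious obstacle here: the only delicate point is verifying that the intersection $\left(\tfrac23 - 2b, 2b\right) \cap [0,1]$ is nonempty for every $b>\frac16$, which is the endpoint analysis carried out above.
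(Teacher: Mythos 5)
Your proposal is correct and follows essentially the same route as the paper: the ``only if'' direction rests on the identity $a+c=\frac13-2b$ (the paper's \eqref{eq:par_cond3}), and the ``if'' direction together with the identification of the admissible set is exactly the $\nu$-parametrization \eqref{eq:parameter set0} that the paper records in Remark \ref{Fig:0}'s discussion and Appendix \ref{B}. The only cosmetic difference is that you verify nonemptiness of $[0,1]\cap\left(\frac23-2b,\,2b\right)$ by exhibiting the point $\nu=\frac13$ algebraically, whereas the paper argues geometrically that the segment $\mathcal{B}_1(b)$ meets the quadrant $\mathcal{B}_0$; these are the same computation.
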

This is the starting point.  Recall again the set $\mathcal{B}_4(b)$ in Subsection \ref{Positivity} explaining \emph{dispersion-like} parameters:
\[
\mathcal{B}_4(b) := \left\{(a,c) \in \R^2  ~ : ~ 3b(a+c) +2b^2 -8ac < 0\right\}.
\]
According to \eqref{eq:parameter set0.1}, let us substitute 
\[
a = -\frac{\nu}{2} + \frac13 -b \qquad \mbox{and} \qquad c = \frac{\nu}{2} - b
\]
into $3b(a+c) +2b^2 -8ac < 0$. We get the hyperbola-like expression in $(\nu, b)$
\begin{equation}\label{eq:par_cond6}
6\left(b-\frac{11}{72}\right)^2 - \left(\nu - \frac13\right)^2 > \frac{25}{864}.
\end{equation}
Hence, $\mathcal{P}_0(b) \cap \mathcal{B}_4(b) = \widetilde{\mathcal{B}}_4(b)$, where
\[
\begin{aligned}
\widetilde{\mathcal{B}}_4(b) :=  \left\{ \left(-\frac{\nu}{2} + \frac13 -b,\; b,\; \frac{\nu}{2} - b \right) ~ : \right. & \left. ~  6\left(b-\frac{11}{72}\right)^2 - \left(\nu - \frac13\right)^2 > \frac{25}{864}, \right. \\
& \qquad  \left.\nu \in [0,1] \cap \left(\frac23 - 2b, 2b \right) \right\}.
\end{aligned}
\] 
Let us describe in more detail $\widetilde{\mathcal{B}}_4(b)$. For that, we need to consider the following two simultaneous systems of inequalities
\begin{equation}\label{eq:solve1}
\begin{cases}
\nu > \frac23 -2b\\
6\left(b-\frac{11}{72}\right)^2 - \left(\nu - \frac13\right)^2 > \frac{25}{864},
\end{cases}
\end{equation} 
and
\begin{equation}\label{eq:solve2}
\begin{cases}
\nu<2b\\
6\left(b-\frac{11}{72}\right)^2 - \left(\nu - \frac13\right)^2 > \frac{25}{864}.
\end{cases}
\end{equation} 
For  \eqref{eq:solve1} and $b>0$, we have solutions 
\begin{equation}\label{eq:solution1}
\frac13 - \sqrt{6b^2-\frac{11}{6}b +\frac19} < \nu, \qquad b <\frac14,
\end{equation}
and
\begin{equation}\label{eq:solution2}
\frac23 - 2b < \nu, \qquad b \ge \frac14.
\end{equation}
On the other hand, for \eqref{eq:solve2},
\begin{equation}\label{eq:solution3}
\nu < \frac13 + \sqrt{6b^2-\frac{11}{6}b +\frac19}, \qquad b <\frac14,
\end{equation}
and
\begin{equation}\label{eq:solution4}
\nu < 2b, \qquad b \ge \frac14.
\end{equation}
Collecting \eqref{eq:solution1}-\eqref{eq:solution4}, we get
\begin{equation}\label{eq:solution5}
\frac13 - \sqrt{6b^2-\frac{11}{6}b +\frac19} < \nu < \frac13 + \sqrt{6b^2-\frac{11}{6}b +\frac19}, \qquad b <\frac14,
\end{equation}
and
\begin{equation}\label{eq:solution6}
\frac23 -b < \nu < 2b, \qquad b \ge\frac14.
\end{equation}
Therefore, from these observation, we know that $ \widetilde{\mathcal{B}}_4 \neq \emptyset$ provided $b > \frac29$. Indeed, this is because the interval
\[
\left(\frac13 - \sqrt{6b^2-\frac{11}{6}b +\frac19}, \; \frac13 + \sqrt{6b^2-\frac{11}{6}b +\frac19}\right)
\]
is nonempty only for $b>\frac29.$ Moreover, if now
\[
I_b := [0,1] \cap \left(\frac23 - 2b, \; 2b \right) \cap \left(\frac13 - \sqrt{6b^2-\frac{11}{6}b +\frac19}, \; \frac13 + \sqrt{6b^2-\frac{11}{6}b +\frac19} \right),
\]
then we have
\[
I_b = \begin{cases}
\left(\frac13 - \sqrt{6b^2-\frac{11}{6}b +\frac19}, \frac13 + \sqrt{6b^2-\frac{11}{6}b +\frac19} \right), \qquad \frac29 < b \le \frac14,\\
\left(\frac23 - 2b, 2b \right), \qquad \frac14 < b \le \frac13,\\
\left[0, 2b \right), \qquad \frac13 < b \le \frac12, \\
[0,1], \qquad \frac12 < b.
\end{cases}
\]
Concluding, we can define
\[
\mathcal{P}(b) := \widetilde{\mathcal{B}}_4 (b)=  \left\{ \left(-\frac{\nu}{2} + \frac13 -b,\; b,\; \frac{\nu}{2} - b \right) :  \nu \in I_b\right\},
\]
for $b > \frac29$. Let
\[\mathcal{P}:=  \bigcup_{b > \frac29} \mathcal{P}(b).
\]
Then $\mathcal{P}$ describes all parameters $(a,b,c)$ for which Theorem \ref{Thm1} holds.

\providecommand{\bysame}{\leavevmode\hbox to3em{\hrulefill}\thinspace}
\providecommand{\MR}{\relax\ifhmode\unskip\space\fi MR }
\providecommand{\MRhref}[2]{%
  \href{http://www.ams.org/mathscinet-getitem?mr=#1}{#2}
}
\providecommand{\href}[2]{#2}

\end{document}